\newtheorem{theorem}{Theorem}[section]
\newtheorem{lemma}{Lemma}[section]
\newtheorem{remark}{Remark}[section]
\newcommand{\bld}[1]{\hbox{\boldmath$#1$}}    
\newcommand{\Th}{\mathcal{T}_h}
\newcommand{\Eh}{\mathcal{E}_h}
\newcommand{\lXi}{\text{\Large$\Xi$}}
\newcommand{\jmp}[1]{[\![#1 ]\!]}
\begin{document}
\title[Div-HDG]{A monolithic divergence-conforming HDG scheme
for a linear fluid-structure interaction model}
\author{Guosheng Fu}
\address{Department of Applied and Computational Mathematics and 
Statistics, University of Notre Dame, USA.}
\email{gfu@nd.edu}
 \thanks{G. Fu gratefully acknowledges the partial support of this work
 from U.S. National Science Foundation through grant DMS-2012031.}
\author{Wenzheng Kuang}
\address{Department of Applied and Computational Mathematics and 
Statistics, University of Notre Dame, USA.}
\email{wkuang1@nd.edu}

\keywords{Divergence-conforming HDG, FSI, thick structure, block
preconditioner}
\subjclass{65N30, 65N12, 76S05, 76D07}
\begin{abstract}
We present a novel monolithic divergence-conforming HDG scheme for 
a linear fluid-structure interaction (FSI) problem
with a thick structure. 
A pressure-robust optimal energy-norm estimate 
is obtained for the semidiscrete scheme.
When combined with a Crank-Nicolson time discretization,
our fully discrete scheme is energy stable and produces an exactly divergence-free fluid velocity approximation.
%which
%{is energy-stable and optimal convergent while robust with respect to the material parameters.}
The resulting linear system, which is symmetric and
indefinite, is solved using 
a preconditioned MinRes method  with a robust block algebraic multigrid (AMG) preconditioner. 
\end{abstract}
\maketitle

\section{Introduction}
\label{sec:intro}
Fluid-structure interaction (FSI) describes a multi-physics phenomenon that involves the highly non-linear coupling between a deformable or moving structure and a surrounding or internal fluid. There has been intensive interest in solving FSI problems due to its wide applications in biomedical, engineering and architecture fields, such as the simulation of blood-cell interactions, the study of wing fluttering in aerodynamics and the design of dams with reservoirs. However, it is generally difficult to achieve analytical solution to FSI problem with its nonlinear and multi-physics nature. Instead, there have been extensive studies in its numerical solutions and an increasing demand for more efficient and accurate numerical schemes 
\cite{bungartz2006fluid,chakrabarti2005numerical,dowell2001modeling,hou2012numerical,richter2017fluid}.

Numerical methodologies for solving FSI problems can be roughly categorized into partitioned and monolithic schemes. 
Distinct mechanisms in fluid and structure domains naturally suggest solvers using partitioned schemes \cite{farhat2006provably,NobileVergara08}. This numerical procedure treats each physical phenomenon separately and allows the use of existing software frameworks that are well-established for each subproblem. However, the design of efficient partitioned schemes that produce stable and accurate results remains a challenge, especially when the density of fluid is comparable to that of structure due to numerical instabilities known as added mass effect \cite{causin2005added}. The design and analysis of partitioned schemes to circumvent such problems has been an active research area in the past decade \cite{causin2005added,fernandez2013fully,lukavcova2013kinematic,banks2016added,bukac2020refactorization}.
An alternative to partitioned strategy is the monolithic approach, which solves the fluid flow and structure dynamics simultaneously using one unified fully-coupled formulation \cite{hubner2004monolithic,tezduyar2007modelling,rugonyi2001finite}. The boundary conditions on the fluid-structure interface will be automatically satisfied in the procedure. Monolithic schemes are usually more robust than partitioned schemes and allow more rigorous analysis of discretization and solution techniques \cite{kloppel2011fluid,richter2017fluid}. However, monolithic schemes have been criticized for requiring well-designed preconditioners\cite{gee2011truly,Nobile01,badia2008modular}, more memory and computation time since the whole system is solved in one formulation.

In this paper, we present a novel monolithic divergence-conforming HDG scheme for 
a linear FSI problem
with a thick structure. 
The fluid Stokes problem is discretized using the divergence-free HDG
scheme of Lehrenfeld and Sch\"oberl \cite{Lehrenfeld10,
LehrenfeldSchoberl16}, and the structure linear elasticity problem is discretized using
the divergence-conforming HDG scheme of Fu et al. \cite{FuLehrenfeld20}. 
We approximate the fluid and structure velocities together using a single 
$H(\mathrm{div})$-conforming finite element space, and we also introduce
a global (hybrid) unknown that approximate the tangential component of the
velocities on the mesh skeleton for the purpose of efficient
implementation.
A pressure-robust optimal energy-norm estimate 
is obtained for the resulting semidiscrete scheme.
We then use a Crank-Nicolson time discretization, and the fluid-structure interface conditions are naturally treated
monolithically. Our fully discrete scheme produces an exactly
divergence-free fluid velocity approximation and is energy stable.
%which
%{is energy-stable and optimal convergent while robust with respect to
%the material parameters.}

When polynomials of degree $k\ge 1$ is used in the scheme, 
the global linear system, which is symmetric and indefinite,
consists of degrees of freedom (DOFs) for 
the normal component of velocity (of polynomial degree $k$) on the mesh
skeleton (facets), the tangential hybrid velocity (of polynomial degree $k-1$)
on the mesh skeleton, and one pressure DOF per element on the
mesh. The linear system problem is then solved via a preconditioned MinRes
method \cite{Saad03} with a block diagonal preconditioner which is of similar form as the uniform
preconditioner studied in Olshanskii  et al. \cite{Olshanskii06}
for a generalized Stokes interface problem. We further use an auxiliary
space preconditioner of Xu \cite{Xu96}
with algebraic multigrid (AMG) for the velocity block and a hypre AMG preconditioner
for the pressure block to arrive at the final block AMG preconditioner.
This preconditioner is numerically verified to be robust with respect to
mesh size, time step size, and material parameters.

%Moreover, we provide an aprior error estimate for the semidiscrete scheme 

The rest of the paper is organized as follows.
In Section \ref{sec:hdg}, we introduce the spatial and temporal
discretization of the divergence-conforming HDG scheme for 
a linear FSI problem with a thick structure.
We then present the block AMG preconditioner in 
Section \ref{sec:prec}.
The a priori error analysis of the semidiscrete scheme is performed in 
Section \ref{sec:err}.
Numerical results are presented in Section \ref{sec:num}.
We conclude in Section \ref{sec:conclude}.

%%%%%%%%%%%%%%%%%%%%%%%%%%%%%%%%%%%%%%%%%%%%
\section{The monolithic divergence-conforming 
HDG scheme for a linear FSI model}
\label{sec:hdg}
\subsection{The model FSI problem}
We consider the interaction between an incompressible, viscous fluid and
a elastic structure. We denote by $\Omega^f(t)\subset \mathbb{R}^d$ the domain occupied by the
fluid and $\Omega^s(t)\subset \mathbb{R}^d$, $d=2,3$ by the solid at the time 
$t\in[0,T]$. Let $\Gamma(t) = \overline{\Omega^f} 
\cap \overline{\Omega^s}$ be the part of the boundary where the elastic
solid interacts with the fluid; see Fig. \ref{fig:fsi}.
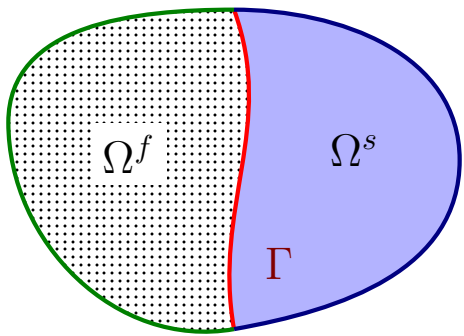
\begin{figure}[ht!]
  \begin{center}
  \begin{tikzpicture}
    \draw[thick,pattern=dots,draw=none] 
    (5,3.75)
    to[out=190,in=-90] (2,6.5) 
    to[out=90,in=180] (5,8)
    to[out=-70,in=100] (5,3.75) --cycle;
    \draw[thick,fill=blue!30,draw=none] 
    (5,3.75)
    to[out=10,in=-90] (8,6) 
    to[out=90,in=0] (5,8)
    to[out=-70,in=100] (5,3.75) --cycle;
    \draw[ultra thick,red] (5,8)
    to[out=-70,in=100] (5,3.75);
    \draw[ultra thick,draw=blue!50!black] 
    (5,3.75)
    to[out=10,in=-90] (8,6) 
    to[out=90,in=0] (5,8);
    \draw[ultra thick,draw=green!50!black] 
    (5,3.75)
    to[out=190,in=-90] (2,6.5) 
    to[out=90,in=180] (5,8);
    \node at (3.6,6.5)[fill=white,below,scale=1.2] {\color{black} \Large
    $\Omega^f$};
    \node at (6.6,6.5)[below,scale=1.2] {\color{black} \Large $\Omega^s$};
    \node at (5.6,5)[below,scale=1.2] {\color{red!50!black} \Large
    $\Gamma$};
  \end{tikzpicture}
\end{center}
  \caption{\it Sketch of a domain for FSI.}
  \label{fig:fsi}
\end{figure}
For the purpose of this paper, we assume that the
nonlinear convection term in the fluid is negligible and
the solid is linearly elastic and the deformation is small. 
Hence, the domain $\Omega^{f/s}$ does not change over time, and 
the fluid flow is modeled using the time dependent Stokes equations
while the structure is modeled using the linear elastodynamics equations: 
\begin{subequations}
  \label{model}
  \begin{alignat}{2}
    \label{f-eq}
\left.
  \begin{tabular}{rl}
$\rho^f \partial_t \bld u^f -\nabla \cdot \bld \sigma^f(\bld u^f, p^f) =$&
    $\bld f^f$\quad\\
$\nabla\cdot \bld u^f =$ &$0$
  \end{tabular}\right\} &\;\; \text{in
} \Omega^f\times [0,T],\\
    \label{s-eq}
\left.
  \begin{tabular}{rl}
$\rho^s \partial_t \bld u^s -\nabla \cdot \bld \sigma^s(\bld \eta^s) =$&
    $\bld f^s$\quad\\
%$\nabla\cdot \bld \eta^s+\frac{1}{\lambda^s}p^s =$ &$0$\\
$\partial_t\bld \eta^s-\bld u^s =$ &$0$
  \end{tabular}\right\} &\;\; \text{in
}     \Omega^s\times [0,T],
  \end{alignat}
%  where $\Omega^{f}, \Omega^{s}\subset \mathbb{R}^d, d=2,3$
%  are the (fixed) fluid and structure domains which are assumed to to 
%  polyhedral, $T$ is the final time,  
where  $\rho^f$ is the fluid density, 
 $\bld u^f$ is the fluid velocity, $p^f$ is the fluid pressure,
$\bld f^f$ is the fluid source term, and
 $\bld \sigma^f$ is the fluid stress
 tensor given as follows:
  \begin{align*}
    \bld \sigma^f(\bld u^f, p^f): =  
    -p^f\bld I + 2\mu^f\bld D(\bld u^f),
  \end{align*}
  where $\bld I$ is the identity tensor, 
  $\mu^f$ is the fluid viscosity, and 
  $\bld D(\bld u^f):=\frac12(\nabla\bld u^f+(\nabla \bld u^f)^T)$
  is the fluid strain rate tensor, 
  while  $\rho^s$ is the structure density, 
  $\bld \eta^s$ is the structure displacement,
 $\bld u^s$ is the structure velocity, 
 %$p^s:=-\lambda^s\nabla\cdot \bld
% \eta^s$ is an auxilary pesdo-pressure unknown, 
 $\bld f^s$ is the structure source term, and
 $\bld \sigma^s$ is the structure Cauchy stress
 tensor given as follows:
  \begin{align*}
    \bld \sigma^s(\bld \eta^s): =  
    \lambda^s(\nabla \cdot\bld\eta^s)\bld I + 2\mu^s\bld D(\bld \eta^s),
  \end{align*}
where $\mu^s$ and $\lambda^s$ are the Lam\'e constants.
%Eliminating $p^s$ and $\bld u^s$ in the structure equations in
%\eqref{s-eq}, we recover the linear elastodynamics
%equations in its classical form:
%\begin{align*}
%  \rho^s \partial_{tt} \bld \eta^s -
%  \nabla \cdot \left(
%    2\mu^s\bld D(\bld \eta^s)
%    +
%    (\lambda^s\nabla\cdot\bld \eta^s) \bld I
%  \right) =\,\bld f^s.
%\end{align*}

The fluid and structure sub-problems are coupled with the following {\it kinematic}
and 
{\it dynamic} coupling conditions \cite{richter2017fluid} on the interface $\Gamma$:
\begin{alignat}{2}
  \label{interface}
  \left.
  \begin{tabular}{r}
 $ \bld u^f = \; \bld u^s$\\
 $\bld \sigma^f\bld n^f+\bld \sigma^s\bld n^s =\;0$
 \end{tabular}\right\} \text{ on }\Gamma\times [0,T],
\end{alignat}
where 
%$\Gamma:=\partial \Omega^f\cap \partial \Omega^s$ is the 
%fluid-structure interface, 
$\bld n^f$ and $\bld n^s$ are the normal
directions on the fluid-structure interface $\Gamma$ 
pointing from the fluid and structure domains,
respectively.

To close the system, we need proper initial and boundary conditions. For
simplicity, in our analysis we consider a homogeneous Dirichlet boundary conditions 
on the exterior boundaries:
\begin{align}
  \label{bcbc}
  \bld u^f =&\;  0 \;\;\text{ on
  } \Gamma^f:=\partial\Omega^f\backslash\Gamma,\quad\quad
  \bld \eta^s =\;0 \;\;\text{ on } \Gamma^s:=\partial\Omega^s\backslash\Gamma.
\end{align}
We mention that other standard boundary conditions on the exterior boundaries 
can also be used,  see e.g. the numerical results in Section \ref{sec:num}.
Finally, the initial condition is given as follows:
\begin{align}
  \label{init}
  \bld u^f(x, 0) =&\;  \bld u^f_0(x) \;\;\text{ on
  } \Omega^f,\quad\quad
  \bld u^s(x,0) =\;\bld u^s_0(x), \,
  \bld \eta^s(x,0) =\;\bld \eta^s_0(x), \,
  \;\;\text{ on } \Omega^s,
\end{align}
\end{subequations}
where $\bld u^f_0, \bld u^s_0$, and $\bld \eta^s_0$, respectively, are the 
initial fluid velocity, initial structure velocity, and initial structure
displacement, respectively.

\subsection{Preliminaries and finite element spaces}
We assume the domains $\Omega^f$, $\Omega^s$, as well as the interface 
$\Gamma$ are polypope.
Let $\Omega$ be the union of the fluid and structure domains, i.e., 
$\overline{\Omega} = \overline{\Omega^f}\cup \overline{\Omega^s}$.
Let $\Th$ be an interface-fitted conforming simplicial triangulation of the domain
$\Omega$ such that the interface $\Gamma$ is the union of element facets.
For any element $K\in \Th$, we denote by $h_K$ its diameter and we denote
by 
$h$ the maximum diameter over all mesh elements.
Denote by $\Th^f$ the set of mesh elements that belong to $\Omega^f$ and by 
$\Th^s$ those belong to $\Omega^s$. Denote by $\Eh$ the set of facets of
$\Th$, by $\Eh^f$ the set of facets
that are interior to $\overline{\Omega^f}$, and by 
$\Eh^s$ the set of facets that are interior to $\overline{\Omega^s}$.
We also denote by $\Gamma_h$, $\Gamma_h^f$, $\Gamma_h^s$ the set of facets that lie on the interface
$\Gamma$, the fluid exterior boundary $\Gamma^f$, and the 
solid exterior boundary $\Gamma^s$, respectively. We have $\Gamma_h = \Eh^f\cap \Eh^s$.
Given a simplex $S\subset \mathbb{R}^d, d=1,2,3$, we denote 
$\mathcal{P}^m(S)$, $m\ge 0$, as the space of polynomials of degree at most
$m$. 
Given a facet $F\in \Eh$ with normal direction $\bld n$, we denote 
$\mathsf{tang}(\bld w):=\bld w-(\bld w\cdot\bld n)\bld n$ as the {\it
tangential component} of a vector field $\bld w$. 

The following finite element spaces will be used in our scheme:
\begin{subequations}
  \label{spaces}
\begin{align}
%%%%%%%%%%%%%%%%%%%%%%%%%% V-s
%  \bld V_h^{r,f}:=&\;\{\bld v\in H(\mathrm{div};\Omega^f): \;\;\bld
%    v|_K\in [\mathcal{P}^r(K)]^d, \;\forall K\in \Th^f\}, \\ 
%      \bld V_{h,0}^{r,f}:=&\;\{\bld v\in \bld V_h^{r,f}: \;\;\bld v\cdot
%      \bld n|_F = 0,\; \forall F\in \Gamma_h^f\}, \\ 
%    \bld V_h^{r,s}:=&\;\{\bld v\in H(\mathrm{div};\Omega^s): \;\;\bld
%      v|_K\in
%    [\mathcal{P}^{r}(K)]^d, \;\forall K\in \Th^s\}, \\ 
%      \bld V_{h,0}^{r,s}:=&\;\{\bld v\in \bld V_h^{r,s}: \;\;\bld v\cdot \bld
%      n|_F=0, \;\forall F\in\Gamma_h^s\}, \\ 
    \bld V_h^{r}:=&\;\{\bld v\in H(\mathrm{div};\Omega): \;\;\bld v|_K\in
  [\mathcal{P}^r(K)]^d, \;\forall K\in \Th\}, \\ 
  \label{vh0}
      \bld V_{h,0}^{r}:=&\;\{\bld v\in \bld V_h^{r}: \;\;\bld v\cdot
      \bld n|_F = 0,\; \forall F\in \Gamma_h^f\cup\Gamma_h^s\}, \\
%%%%%%%%%%%%%%%%%%%%%%%%%% V-hats
%        \widehat{\bld V}_h^{r,f}:=&\;\{\widehat{\bld v}
%          \in L^2(\Eh^f): \;\;\widehat{\bld v}|_F\in
%        [\mathcal{P}^r(F)]^d,\;\;\widehat{\bld v}\cdot \bld n|_F=0, 
%      \;\forall F\in \Eh^f\}, \\ 
%          \widehat{\bld V}_{h,0}^{r,f}:=&\;\{\widehat{\bld v}\in 
%            \widehat{\bld V}_h^{r,f}: \;\;\mathsf{tang}(\widehat{\bld v})|_F = 0,\; \forall F\in \Gamma_h^f\}, \\ 
%        \widehat{\bld V}_h^{r,s}:=&\;\{\widehat{\bld v}
%          \in L^2(\Eh^s): \;\;\widehat{\bld v}|_F\in
%        [\mathcal{P}^r(F)]^d,\;\;\widehat{\bld v}\cdot \bld n|_F=0, 
%      \;\forall F\in \Eh^s\}, \\ 
%          \widehat{\bld V}_{h,0}^{r,s}:=&\;\{\widehat{\bld v}\in 
%            \widehat{\bld V}_h^{r,s}: \;\;\mathsf{tang}(\widehat{\bld
%          v})|_F = 0,\; \forall F\in \Gamma_h^s\}, \\ 
        \widehat{\bld V}_h^{r}:=&\;\{\widehat{\bld v}
          \in [L^2(\Eh)]^d: \;\;\widehat{\bld v}|_F\in
        [\mathcal{P}^r(F)]^d,\;\;\widehat{\bld v}\cdot \bld n|_F=0, 
      \;\forall F\in \Eh\}, \\ 
  \label{vhath0}
          \widehat{\bld V}_{h,0}^{r}:=&\;\{\widehat{\bld v}\in 
            \widehat{\bld V}_h^{r,s}: \;\;\mathsf{tang}(\widehat{\bld
          v})|_F = 0,\; \forall F\in \Gamma_h^f\cup \Gamma_h^s\}, \\
%%%%%%%%%%%%%%%%%%%%%%%%%% W-s
%  Q_h^{r,f}:=&\;\{q\in L^2(\Omega^f): \;\;q|_K\in \mathcal{P}^r(K), \;\forall K\in \Th^f\}, \\ 
%  Q_h^{r,s}:=&\;\{q\in L^2(\Omega^s): \;\;q|_K\in \mathcal{P}^r(K),
%  \;\forall K\in \Th^s\}, \\ 
  Q_h^{r}:=&\;\{q\in L^2(\Omega): \;\;q|_K\in \mathcal{P}^r(K),
  \;\forall K\in \Th\},
\end{align}
where $r\ge 0$ is the polynomial degree.
\end{subequations}
We further use a superscript $f/s$ to indicate the restriction of these
spaces on the fluid/structure domain, that is,  
\begin{align*}
  \bld V_{h,0}^{r,f}:=\{\bld v|_{\Th^f}:\;\bld v\in\bld V_{h,0}^{r}\}, \;\;
  \bld V_{h,0}^{r,s}:=&\{\bld v|_{\Th^s}:\;\bld v\in\bld V_{h,0}^{r}\},\\
  \widehat{\bld V}_{h,0}^{r,f}:=\{\widehat{\bld v}|_{\Eh^f}:\;\widehat{\bld
v}\in\widehat{\bld V}_{h,0}^{r}\}, \;\;
  \widehat{\bld V}_{h,0}^{r,s}:=&\{\widehat{\bld v}|_{\Eh^s}:\;\widehat{\bld v}\in
  \widehat{\bld V}_{h,0}^{r}\},\\
Q_h^{r,f}:=\{q|_{\Th^f}:\;q\in Q_h^{r}\}, \;\;
Q_h^{r,s}:=&\{q|_{\Th^s}:\;q\in Q_h^{r}\}.
\end{align*}

\subsection{Semi-discrete divergence-conforming HDG scheme}
In this subsection, we present the divergence-conforming HDG spatial
discretization 
  \cite{Lehrenfeld10,
LehrenfeldSchoberl16, FuLehrenfeld20} 
of the linear FSI system \eqref{model}.

We use the globally divergence-conforming finite element space 
$\bld V_h^{r}$ in
\eqref{vh0} to approximate the global velocity 
\begin{align}
  \label{vel}
  \bld u=\left\{\begin{tabular}{ll}
    $\bld u^f$ & on $\Omega^f$,\\
    $\bld u^s$ & on $\Omega^s$
\end{tabular}\right.,
\end{align}
 and 
the global tangential facet finite element space $\widehat{\bld V}_h^{r}$
in \eqref{vhath0} to approximate the tangential component of 
the global velocity $\bld u$ on the mesh skeleton. 
%Hence, the
%normal and tangential components of the kinematic
%boundary condition $\bld u^f=\bld u^s$ in \eqref{interface} are
%directly built into the finite element
%spaces $\bld V_h^r$ and $\widehat{\bld V}_h^{r}$, respectively.

The weak formulation of the divergence-conforming HDG scheme with 
polynomial degree $k\ge 1$ for 
\eqref{model} is given as follows: 
Find $(\bld u_h, \widehat{\bld u}_h, p_h^f, \bld \eta_h^s, \widehat{\bld
\eta}_h^s)\in \bld V_{h,0}^k\times \widehat{\bld V}_{h,0}^{k-1}\times 
Q_h^{k-1,f}\times\bld V_{h,0}^{k,s}\times \widehat{\bld V}_{h,0}^{k-1,s} 
$ such that
\begin{subequations}
  \label{semi}
  \begin{alignat}{2}
  \label{semi-1}
    (\rho \partial_t\bld u_h, \bld v_h)+ 
    2\mu^f A_h^f(
    (\bld u_h, \widehat{\bld u}_h),(\bld v_h, \widehat{\bld v}_h) )
    -(p_h^f, \nabla\cdot \bld v_h)_f
    -(\nabla\cdot\bld u_h, q_h^f)_f\;\;
    &\\
   + 2\mu^s A_h^s(
 (\bld \eta^s_h, \widehat{\bld \eta}^s_h),(\bld v_h, \widehat{\bld v}_h) )
    +\lambda^s(\nabla \cdot \bld \eta_h^s, \nabla \cdot\bld v_h)_s
    &\;
    =\;  (\bld f, \bld v_h), \nonumber\\
  \label{semi-2}
    (\partial_t \bld \eta_h^s-\bld u_h, \bld \xi_h^s)_s &\;= \;0,\\
  \label{semi-3}
    \langle\partial_t \widehat{\bld \eta}_h^s-\widehat{\bld u}_h, 
    \widehat{\bld \xi}^s_h\rangle_s&\;=\;0,
  \end{alignat}
  for all 
$(\bld v_h, \widehat{\bld v}_h, q_h^f, \bld \xi_h^s, \widehat{\bld
\xi}_h^s)\in \bld V_{h,0}^k\times \widehat{\bld V}_{h,0}^{k-1}\times 
Q_h^{k-1,f}\times\bld V_{h,0}^{k,s}\times \widehat{\bld V}_{h,0}^{k-1,s} 
$, where 
$(\cdot, \cdot)$ denotes the $L^2$-inner product on the domain $\Omega$, 
$(\cdot, \cdot)_f$ denotes the $L^2$-inner product on the fluid domain
$\Omega^f$, 
$(\cdot, \cdot)_s$ denotes the $L^2$-inner product on the structure domain
$\Omega^s$, and 
$\langle\cdot,\cdot\rangle_s$ denotes the $L^2(\Eh^s)$-inner product on the
structure mesh skeleton $\Eh^s$, 
moreover, $\rho= \left\{\begin{tabular}{ll}
    $\rho^f$ & on $\Omega^f$\\
    $\rho^s$ & on $\Omega^s$
\end{tabular}\right.
$ is the global density and 
$\bld f= \left\{\begin{tabular}{ll}
    $\bld f^f$ & on $\Omega^f$\\
    $\bld f^s$ & on $\Omega^s$
\end{tabular}\right.
$ is the global source term on $\Omega$.
Here the operators $A_h^f$ and $A_h^s$ are the following
symmetric interior penalty HDG diffusion operators with 
a {\it projected
jumps} formulation:
for $i\in\{f, s\}$, 
\end{subequations}
\begin{align}
  \label{hdg-diff}
  A_h^i((\bld v_h, \widehat{\bld v}_h), (\bld w_h, \widehat{\bld
  w}_h)):=\sum_{K\in\Th^i}&\;\int_{K}\bld D(\bld v_h):
  \bld D(\bld w_h)\,\mathrm{dx}
  -\int_{\partial K}
  \bld D(\bld v_h)\bld n\cdot \mathsf{tang}(\bld w_h-\widehat{\bld w}_h)
  \,
  \mathrm{ds}\\       
&\hspace{-26ex}
  -\int_{\partial K}
  \bld D(\bld w_h)\bld n\cdot \mathsf{tang}(\bld v_h-\widehat{\bld v}_h)
  \,
  \mathrm{ds}
  +
  \int_{\partial K}
  \frac{\alpha k^2}{h}
  \Pi_h(\mathsf{tang}(\bld v_h-\widehat{\bld v}_h))
  \cdot \Pi_h(\mathsf{tang}(\bld w_h-\widehat{\bld w}_h))
  \,
  \mathrm{ds}\nonumber,
\end{align}
where $\Pi_h$ denotes the $L^2(\Eh)$-projection onto the tangential 
facet finite element space $\widehat{\bld V}_h^{k-1}$.
Efficient implementation of this local projector $\Pi_h$ was discussed in
\cite[Section 2.2.2]{LehrenfeldSchoberl16}.
Here $\alpha>0$ is a sufficiently large stabilization parameter that
ensures the following coercivity result:
\begin{align}
  \label{coercivity}
A_h^{i}((\bld v_h, \widehat{\bld v}_h),
(\bld v_h, \widehat{\bld v}_h))
\ge \frac12
%\underbrace
%{\Big(\|\bld D(\bld v_h)\|_{\Th^i}^2
%+
\sum_{K\in\Th^i} 
\left(\|\bld D(\bld v_h)\|_K^2 
+\frac{\alpha k^2}{h}
\|\Pi_h(\mathsf{tang}(\bld v_h-\widehat{\bld v}_h))\|_{\partial K}^2
\right),
%\Big)}
%_{:=\|(\bld v_h, \widehat{\bld v}_h)\|_{1,h,i}^2}.
\end{align}
where $\|\cdot\|_S$ indicates the $L^2$-norm on the domain $S$.
A sufficient condition on $\alpha$ that guarantees the above coercivity result was 
presented in \cite[Lemma 1]{AinsworthFu18}.
We take $\alpha = 8$ in our numerical experiments in Section \ref{sec:num}.

The following two results show 
consistency and stability of the semi-discrete scheme \eqref{semi}.
\begin{lemma}[Galerkin-orthogonality for the semi-discrete scheme]
Let $(\bld u, p^f, \bld \eta^s)\in H^2(\Omega)\times 
H^1(\Omega^f)\times H^2(\Omega^s)$ be the solution to the 
model problem \eqref{model}.
Then, the equations \eqref{semi} holds true with 
$(\bld u_h, \widehat{\bld u}_h, 
p^f_h, \bld \eta^s_h, \widehat{\bld \eta}^s_h)$
replaced by 
$(\bld u, \bld u|_{\Eh}, 
p^f, \bld \eta^s, \bld \eta^s|_{\Eh^s})$. That is, we have 
\begin{subequations}
  \label{semiX}
  \begin{alignat}{2}
  \label{semiX-1}
    (\rho \partial_t\bld u, \bld v_h)+ 
    2\mu^f A_h^f(
    (\bld u, \widehat{\bld u}),(\bld v_h, \widehat{\bld v}_h) )
    -(p^f, \nabla\cdot \bld v_h)_f
    -(\nabla\cdot\bld u, q_h^f)_f\;\;
    &\\
   + 2\mu^s A_h^s(
 (\bld \eta^s, \widehat{\bld \eta}^s),(\bld v_h, \widehat{\bld v}_h) )
    +\lambda^s(\nabla \cdot \bld \eta^s, \nabla \cdot\bld v_h)_s
    &\;
    =\;  (\bld f, \bld v_h), \nonumber\\
  \label{semiX-2}
    (\partial_t \bld \eta^s-\bld u, \bld \xi_h^s)_s &\;= \;0,\\
  \label{semiX-3}
    \langle\partial_t \widehat{\bld \eta}^s-\widehat{\bld u}, 
    \widehat{\bld \xi}^s_h\rangle_s&\;=\;0,
  \end{alignat}
  for all 
$(\bld v_h, \widehat{\bld v}_h, q_h^f, \bld \xi_h^s, \widehat{\bld
\xi}_h^s)\in \bld V_{h,0}^k\times \widehat{\bld V}_{h,0}^{k-1}\times 
Q_h^{k-1,f}\times\bld V_{h,0}^{k,s}\times \widehat{\bld V}_{h,0}^{k-1,s} 
$, where $\widehat{\bld u}=\bld u|_{\Eh}$ and 
$\widehat{\bld \eta}^s = \bld \eta^s|_{\Eh^s}$.
\end{subequations}
\end{lemma}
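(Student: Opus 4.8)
The plan is to substitute the exact fields into the scheme and verify the three equations \eqref{semiX-1}--\eqref{semiX-3} one at a time, with essentially all of the work concentrated in the momentum equation \eqref{semiX-1}. First I would dispose of the two auxiliary equations, which are immediate. By the second relation in \eqref{s-eq} the exact solution satisfies $\partial_t\bld\eta^s-\bld u^s=0$ pointwise in $\Omega^s$, and since $\bld u=\bld u^s$ there, $(\partial_t\bld\eta^s-\bld u,\bld\xi_h^s)_s=0$ for every $\bld\xi_h^s$, giving \eqref{semiX-2}. Taking the trace of this identity on the structure skeleton $\Eh^s$ yields $\partial_t(\bld\eta^s|_{\Eh^s})-\bld u|_{\Eh^s}=0$, and pairing its tangential part against the (tangential) test function $\widehat{\bld\xi}_h^s$ gives \eqref{semiX-3}. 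The incompressibility contribution in \eqref{semiX-1} is likewise free, since $\nabla\cdot\bld u^f=0$ in $\Omega^f$ forces $(\nabla\cdot\bld u,q_h^f)_f=0$.

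For \eqref{semiX-1} I would multiply the fluid and structure momentum equations in \eqref{f-eq}--\eqref{s-eq} by $\bld v_h$, integrate over each $K\in\Th^f$ and $K\in\Th^s$ respectively, and integrate by parts. Using the symmetry of $\bld D$ and the two constitutive laws, the volume terms reproduce $(\rho\partial_t\bld u,\bld v_h)$, the strain--strain terms inside $A_h^f,A_h^s$, the pressure term $-(p^f,\nabla\cdot\bld v_h)_f$, and the Lam\'e term $\lambda^s(\nabla\cdot\bld\eta^s,\nabla\cdot\bld v_h)_s$, leaving behind the elementwise fluxes $-\int_{\partial K}(\bld\sigma^i\bld n)\cdot\bld v_h$. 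The crucial simplification is that the hybrid unknowns are the traces $\widehat{\bld u}=\bld u|_{\Eh}$ and $\widehat{\bld\eta}^s=\bld\eta^s|_{\Eh^s}$ of globally continuous fields; here the kinematic condition $\bld u^f=\bld u^s$ on $\Gamma$ from \eqref{interface} is what makes $\bld u\in H^1(\Omega)$, so that $\mathsf{tang}(\bld u-\widehat{\bld u})=0$ and $\mathsf{tang}(\bld\eta^s-\widehat{\bld\eta}^s)=0$ on \emph{every} facet. Consequently the symmetrizing terms and the $\Pi_h$-penalty terms in $A_h^f,A_h^s$ vanish identically, and only the adjoint-consistency facet terms $-\int_{\partial K}2\mu^i\,\bld D(\cdot)\bld n\cdot\mathsf{tang}(\bld v_h-\widehat{\bld v}_h)$ remain to be matched.

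I would then reconcile these surviving HDG facet terms against the fluxes $-\int_{\partial K}(\bld\sigma^i\bld n)\cdot\bld v_h$ facet by facet, splitting $\Eh$ into facets interior to $\Omega^f$ or $\Omega^s$, exterior facets in $\Gamma_h^f\cup\Gamma_h^s$, and interface facets in $\Gamma_h$. On any facet the $H(\mathrm{div})$-conformity of $\bld v_h$ gives $\jmp{\bld v_h\cdot\bld n}=0$, so velocity jumps are purely tangential (and on exterior facets $\bld v_h$ itself is tangential, since $\bld v_h\cdot\bld n=0$); because the normal stress is carried by $p^f$ or by $\lambda^s\nabla\cdot\bld\eta^s$, only the tangential traction $\mathsf{tang}(\bld\sigma^i\bld n)=2\mu^i\,\mathsf{tang}(\bld D(\cdot)\bld n)$ survives the pairing. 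Continuity of the exact stress across interior facets, together with the single-valuedness of $\widehat{\bld v}_h$, makes the two adjacent contributions on each interior facet telescope into exactly the corresponding HDG term, while the boundary conditions \eqref{bcbc} dispatch the exterior facets.

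The main obstacle, and the entire point of the monolithic coupling, is the interface $\Gamma_h$. There each facet $F$ is shared by a fluid element and a structure element, so integration by parts produces $-\int_F(\bld\sigma^f\bld n^f)\cdot\bld v_h-\int_F(\bld\sigma^s\bld n^s)\cdot\bld v_h$, which mixes the two subproblems. The dynamic coupling condition $\bld\sigma^f\bld n^f+\bld\sigma^s\bld n^s=0$ from \eqref{interface} collapses this into a single tangential-traction pairing against the (tangential) velocity difference across $F$, and I must verify that this equals the sum of the fluid and structure HDG facet terms on $F$ once the shared value $\widehat{\bld v}_h$ cancels. The delicate bookkeeping is keeping the orientations $\bld n^f=-\bld n^s$ consistent and confirming that the tangential parts of $2\mu^f\bld D(\bld u^f)\bld n$ and $2\mu^s\bld D(\bld\eta^s)\bld n$ are identified through \eqref{interface}; once this is checked, summing all facet contributions together with the volume identities established above yields \eqref{semiX-1}.
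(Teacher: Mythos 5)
Your argument is correct and is essentially the paper's proof read in the opposite direction: the paper expands $A_h^{f}$ and $A_h^{s}$ (using $\mathsf{tang}(\bld u-\widehat{\bld u})=\mathsf{tang}(\bld \eta^s-\widehat{\bld \eta}^s)=0$ to kill the symmetrizing and penalty terms) and integrates by parts back to the strong residual plus a single $\Gamma_h$ term that the dynamic condition annihilates, whereas you start from the PDE and integrate by parts forward to the bilinear forms; the two computations are identical. The only imprecision is the claim that ``only the tangential traction survives the pairing'': the normal component of $2\mu^i\bld D(\cdot)\bld n$ also pairs nontrivially with $(\bld v_h\cdot\bld n)\bld n$, but since you invoke the full dynamic condition $\bld\sigma^f\bld n^f+\bld\sigma^s\bld n^s=0$ at the interface and rely on continuity of the exact stress plus $H(\mathrm{div})$-conformity on interior facets, that piece cancels as well and nothing is lost.
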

\begin{proof}
  The equations \eqref{semiX-2} and \eqref{semiX-3} 
  follows from the second equation in \eqref{s-eq}.
  We are left to prove the equation \eqref{semiX-1}.
  Since $\mathsf{tang}(\bld u-\widehat{\bld u}) = 0$, we have,
for any function $(\bld v_h, \widehat{\bld v}_h)\in 
\bld V_{h,0}^{k}\times \widehat{\bld V}_{h,0}^{k-1}$, 
\begin{align*}
 A_h^f(
 (\bld u, \widehat{\bld u}),(\bld v_h, \widehat{\bld v}_h) )
 = &\;
  \sum_{K\in\Th^f}\;\int_{K}\bld D(\bld u):
  \bld D(\bld v_h)\,\mathrm{dx}
  -\int_{\partial K}
  \bld D(\bld u)\bld n\cdot \mathsf{tang}(\bld v_h-\widehat{\bld v}_h)\,
  \mathrm{ds}\\
 = &\;
 -(\nabla\cdot\bld D(\bld u), \bld v_h)_f
 +\sum_{K\in\Th^f} 
 \int_{\partial K}
 \bld D(\bld u)\bld n\cdot ((\bld v_h\cdot \bld n) \bld
 n+\mathsf{tang}(\widehat{\bld v}_h))\,
  \mathrm{ds}\\
 = &\;
 -(\nabla\cdot\bld D(\bld u), \bld v_h)_f
 + 
 \int_{\Gamma_h}
 \bld D(\bld u)\bld n^f\cdot ((\bld v_h\cdot \bld n) \bld
 n+\mathsf{tang}(\widehat{\bld v}_h))\,
  \mathrm{ds}.
\end{align*}
Similarly, we have, for any function $(\bld v_h, \widehat{\bld v}_h)\in 
\bld V_{h,0}^{k}\times \widehat{\bld V}_{h,0}^{k-1}$, 
\begin{align*}
 A_h^s(
 (\bld \eta^s, \widehat{\bld \eta^s}),(\bld v_h, \widehat{\bld v}_h) )
 = &\;
 -(\nabla\cdot\bld D(\bld \eta^s), \bld v_h)_s
 + \int_{\Gamma_h}
 \bld D(\bld \eta^s)\bld n^s\cdot ((\bld v_h\cdot \bld n) \bld
 n+\mathsf{tang}(\widehat{\bld v}_h))\,
  \mathrm{ds},\\
  (p^f, \nabla\cdot \bld v_h)_f = 
   &\;-(\nabla p^f, \bld v_h)_f + 
   \int_{\Gamma_h} p^f (\bld v_h\cdot \bld n^f)\,\mathrm{ds},\\
   (\nabla\cdot\bld \eta^s, \nabla\cdot \bld v_h)_s = 
   &\;-(\nabla (\nabla\cdot \bld \eta^s), \bld v_h)_s + 
   \int_{\Gamma_h} (\nabla\cdot \bld \eta^s) 
   (\bld v_h\cdot \bld
   n^s)\,\mathrm{ds}.
\end{align*}
Combining these equations, we get
\begin{align*}
    &(\rho \partial_t\bld u, \bld v_h)+ 
    2\mu^f A_h^f(
    (\bld u, \widehat{\bld u}),(\bld v_h, \widehat{\bld v}_h) )
    -(p^f, \nabla\cdot \bld v_h)_f
    -(\nabla\cdot\bld u, q_h^f)_f\;\;
    \\
    & + 2\mu^s A_h^s(
 (\bld \eta^s, \widehat{\bld \eta}^s),(\bld v_h, \widehat{\bld v}_h) )
    +\lambda^s(\nabla \cdot \bld \eta^s, \nabla \cdot\bld v_h)_s
    -(\bld f, \bld v_h)
    \;\\
    & =\; (\rho^f\partial_t \bld u^f-\nabla\cdot \bld \sigma^f-\bld f^f, \bld v_h)_f 
    +(\rho^s\partial_t\bld u^s-\nabla\cdot \bld \sigma^s-\bld f^s, \bld v_h)_s
    \\
    &
   \;\;\;\; +\int_{\Gamma_h}(\bld \sigma^f\bld n^f+\bld \sigma^s\bld n^s)
    ( (\bld v_h\cdot \bld
   n)\cdot \bld n+\mathsf{tang}(\widehat{\bld v}_h)))\,\mathrm{ds}
  \\
    &=0,
\end{align*}
where 
we used the PDE \eqref{f-eq}, \eqref{s-eq}, and the 
dynamic interface condition in \eqref{interface}.
This completes the proof of the equation $\eqref{semiX-1}$.
\end{proof}

\begin{lemma}[Stability for the semi-discrete scheme]
  \label{lemma:stab-semi}
  Let $(\bld u_h, \widehat{\bld u}_h, p_h^f, \bld \eta_h^s, 
\widehat{\bld \eta}_h^s)\in 
\bld V_{h,0}^k\times \widehat{\bld V}_{h,0}^{k-1}\times 
Q_h^{k-1,f}\times\bld V_{h,0}^{k,s}\times \widehat{\bld V}_{h,0}^{k-1,s} 
$ be the numerical solution to the semi-discrete scheme \eqref{semi}.
Then, the velocity approximation on the fluid domain is exactly divergence free: 
  \begin{align}
    \label{div1}
    \nabla\cdot \bld u_h|_{\Th^f} = 0,
  \end{align}
  and the following energy identity holds:
  \begin{align}
    \label{ener1}
    \frac12\frac{d}{dt} E_h = -2\mu^f 
    A_h^f((\bld u_h, \widehat{\bld u}_h),(\bld u_h, \widehat{\bld u}_h))
    +(\bld f, \bld u_h),
  \end{align} where 
  $
  E_h := (\rho\bld u_h, \bld u_h)+
  \lambda^s(\nabla \cdot \bld \eta_h^s, \nabla \cdot \bld \eta^s_h)_s
 +2\mu^s
    A_h^s((\bld \eta_h, \widehat{\bld \eta}_h),(\bld \eta_h, \widehat{\bld
    \eta}_h))
  $
  is the total energy.
\end{lemma}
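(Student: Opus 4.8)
The plan is to prove the two assertions in order, using the divergence-free identity \eqref{div1} as an ingredient that simplifies the subsequent energy computation. First I would establish \eqref{div1}. In the scheme \eqref{semi} the only term that sees the pressure test function is $-(\nabla\cdot\bld u_h, q_h^f)_f$, so setting $\bld v_h=\bld 0$, $\widehat{\bld v}_h=\bld 0$, $\bld \xi_h^s=\bld 0$, $\widehat{\bld \xi}_h^s=\bld 0$ in \eqref{semi-1} leaves $(\nabla\cdot\bld u_h, q_h^f)_f=0$ for all $q_h^f\in Q_h^{k-1,f}$. Since $\bld u_h|_K\in[\mathcal{P}^k(K)]^d$ forces $\nabla\cdot\bld u_h|_K\in\mathcal{P}^{k-1}(K)$, the restriction $\nabla\cdot\bld u_h|_{\Th^f}$ is itself an admissible pressure test function; choosing $q_h^f=\nabla\cdot\bld u_h|_{\Th^f}$ gives $(\nabla\cdot\bld u_h,\nabla\cdot\bld u_h)_f=0$, which is exactly \eqref{div1}.

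Next I would extract the \emph{strong} discrete kinematic relations, which are the crux of the argument. The key observation is that $\partial_t\bld\eta_h^s$ and $\bld u_h|_{\Th^s}$ both belong to $\bld V_{h,0}^{k,s}$, so their difference is itself an admissible test function in \eqref{semi-2}. Choosing $\bld\xi_h^s=\partial_t\bld\eta_h^s-\bld u_h$ yields $(\partial_t\bld\eta_h^s-\bld u_h,\partial_t\bld\eta_h^s-\bld u_h)_s=0$, hence $\partial_t\bld\eta_h^s=\bld u_h$ pointwise on $\Th^s$. The identical argument applied to \eqref{semi-3} with $\widehat{\bld\xi}_h^s=\partial_t\widehat{\bld\eta}_h^s-\widehat{\bld u}_h\in\widehat{\bld V}_{h,0}^{k-1,s}$ gives $\partial_t\widehat{\bld\eta}_h^s=\widehat{\bld u}_h$ on $\Eh^s$. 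These are genuine pointwise identities, not merely weak or projected ones, precisely because the velocity and the displacement rate are discretized in the same finite element spaces; this is what lets the energy balance close without a defect term.

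Finally I would assemble the energy identity. Differentiating $E_h$ in time and using the symmetry of $A_h^s$ and of the $\lambda^s$-bilinear form, I would obtain
\begin{align*}
\tfrac12\tfrac{d}{dt}E_h
=(\rho\,\partial_t\bld u_h,\bld u_h)
+\lambda^s(\nabla\cdot\partial_t\bld\eta_h^s,\nabla\cdot\bld\eta_h^s)_s
+2\mu^s A_h^s\big((\partial_t\bld\eta_h^s,\partial_t\widehat{\bld\eta}_h^s),(\bld\eta_h^s,\widehat{\bld\eta}_h^s)\big).
\end{align*}
Substituting the kinematic relations $\partial_t\bld\eta_h^s=\bld u_h$ and $\partial_t\widehat{\bld\eta}_h^s=\widehat{\bld u}_h$ turns the last two terms into $\lambda^s(\nabla\cdot\bld u_h,\nabla\cdot\bld\eta_h^s)_s+2\mu^s A_h^s((\bld u_h,\widehat{\bld u}_h),(\bld\eta_h^s,\widehat{\bld\eta}_h^s))$, which by symmetry of the forms coincide exactly with the structure contributions in the momentum equation.

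Testing \eqref{semi-1} with $(\bld v_h,\widehat{\bld v}_h,q_h^f)=(\bld u_h,\widehat{\bld u}_h,p_h^f)$ and invoking \eqref{div1} to annihilate both pressure terms then gives
\begin{align*}
(\rho\,\partial_t\bld u_h,\bld u_h)
+2\mu^s A_h^s\big((\bld\eta_h^s,\widehat{\bld\eta}_h^s),(\bld u_h,\widehat{\bld u}_h)\big)
+\lambda^s(\nabla\cdot\bld\eta_h^s,\nabla\cdot\bld u_h)_s
=(\bld f,\bld u_h)-2\mu^f A_h^f\big((\bld u_h,\widehat{\bld u}_h),(\bld u_h,\widehat{\bld u}_h)\big).
\end{align*}
Comparing the two displays yields \eqref{ener1} immediately. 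I expect the only delicate step to be the strong kinematic identities of the second paragraph; once those are in hand, the remainder is symmetry of the bilinear forms together with a single testing of the momentum equation against the discrete solution.
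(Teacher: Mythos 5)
Your proposal is correct and follows essentially the same route as the paper's own proof: obtain \eqref{div1} by testing with $q_h^f=\nabla\cdot\bld u_h|_{\Th^f}$, deduce the pointwise kinematic identities $\partial_t\bld\eta_h^s=\bld u_h|_{\Th^s}$ and $\partial_t\widehat{\bld\eta}_h^s=\widehat{\bld u}_h|_{\Eh^s}$ from the coincidence of the velocity and displacement spaces, and then test the momentum equation with $(\bld u_h,\widehat{\bld u}_h)$ and use symmetry of $A_h^s$ and the $\lambda^s$-form to recognize the structure terms as $\frac12\frac{d}{dt}$ of the corresponding energy contributions. The only difference is cosmetic ordering, and your explicit justification of the strong kinematic relations via the test function $\bld\xi_h^s=\partial_t\bld\eta_h^s-\bld u_h$ fills in a step the paper states without detail.
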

\begin{proof}
  Let us first prove the divergence-free property \eqref{div1}.
  By the choice of the velocity finite element space $\bld V_h^{k}$ and
  fluid pressure finite element space $Q_h^{k-1, f}$, we have 
  $\nabla\cdot\bld u_h|_{\Th^f}\in Q_h^{k-1, f}$. Now taking
  $q_h^f=\nabla\cdot\bld u_h|_{\Th^f}$ in equation \eqref{semi-1}, we get
  $$
  (\nabla\cdot\bld u_h, \nabla\cdot\bld u_h)_f=0.
  $$ Hence the divergence-free property \eqref{div1} holds true.
  
  Next, let us prove the energy identity \eqref{ener1}.
  Taking test function 
  $(\bld v_h, \widehat{\bld v}_h) = (\bld u_h, \widehat{\bld u}_h)$
  in equation \eqref{semi-1}, and using the divergence-free property
  \eqref{div1}, we get
\begin{align*}
    (\rho \partial_t\bld u_h, \bld u_h)+ 
    2\mu^f A_h^f(
    (\bld u_h, \widehat{\bld u}_h),(\bld u_h, \widehat{\bld u}_h) )
   + 2\mu^s A_h^s(
 (\bld \eta^s_h, \widehat{\bld \eta}^s_h),(\bld u_h, \widehat{\bld u}_h) )
    +\lambda^s(\nabla \cdot \bld \eta_h^s, \nabla \cdot\bld u_h)_s
    &\;
    =\;  (\bld f, \bld v_h).
\end{align*}
Since $\bld u_h|_{\Th^s}\in \bld V_h^{k,s}$ and 
$\widehat{\bld u}_h|_{\Eh^s}\in \widehat{\bld V}_h^{k-1,s}$, 
equations \eqref{semi-2}--\eqref{semi-3} implies that 
$$
\bld u_h|_{\Th^s} = \partial_t \bld \eta_h^s, \text{ and }  
\widehat{\bld u}_h|_{\Eh^s} = \partial_t {\widehat{\bld \eta}}_h^s.
$$
Hence, 
$$
2\mu^s A_h^s(
 (\bld \eta^s_h, \widehat{\bld \eta}^s_h),(\bld u_h, \widehat{\bld u}_h) )
=
2\mu^s 
A_h^s(
 (\bld \eta^s_h, \widehat{\bld \eta}^s_h),(\partial_t\bld \eta^s_h,
 \partial_t\widehat{\bld \eta}_h^s) )
 =\frac12\frac{d}{dt}(2\mu^s
A_h^s(
 (\bld \eta^s_h, \widehat{\bld \eta}^s_h),(\bld \eta^s_h,
 \widehat{\bld \eta}_h^s))
 ),
$$
and 
$$
\lambda^s(\nabla \cdot \bld \eta_h^s, \nabla \cdot\bld u_h)_s
= 
\frac12\frac{d}{dt}\left(\lambda^s(\nabla \cdot \bld \eta_h^s, \nabla
\cdot\bld \eta_h^s)_s\right).
$$
Combining the above equations, we arrive at the energy identity
\eqref{ener1}. This completes the proof.
\end{proof}

\subsection{Monolithlic fully discrete divergence-conforming HDG scheme}
In this subsection, we consider the temporal discretization of the
semi-discrete scheme \eqref{semi}. 
We propose to use the second-order Crank-Nicolson
scheme. 
%any other higher-order temporal discretization schemes
%like the implicit Runge-Kutta methods or the backward difference formula
%\cite{} can be applied to the .
For any positive integer $j\in \mathbb{Z}_+$, let
$(\bld u_h^{j-1}, 
\bld \eta_h^{s,j-1}, \widehat{\bld
\eta}_h^{s,j-1})\in \bld V_{h,0}^k \times\bld V_{h,0}^{k,s}\times
\widehat{\bld V}_{h,0}^{k-1,s} 
$ be the numerical solution at time $t_{j-1}$.
Give the time step size $\delta t_{j-1}>0$, 
we proceed to find the solution 
$(\bld u_h^{j}, 
\bld \eta_h^{s,j}, \widehat{\bld
\eta}_h^{s,j})\in \bld V_{h,0}^k \times\bld V_{h,0}^{k,s}\times
\widehat{\bld V}_{h,0}^{k-1,s} 
$ at time $t_{j}=t_{j-1}+\delta t_{j-1}$ along with the solution
$(\widehat{\bld u}_h^{j-1/2}, 
p_h^{f,j-1/2})\in \widehat{\bld V}_{h,0}^{k-1} \times
Q_{h}^{k-1,f}
$ at time $t_{j-1/2}=t_{j-1}+\frac12\delta t_{j-1}$ such that the following
equations holds:
\begin{subequations}
  \label{full}
  \begin{alignat}{2}
  \label{full-1}
  (\rho \frac{\bld u_h^{j}-\bld u_h^{j-1}}{\delta t_{j-1}}, \bld v_h)+ 
    2\mu^f A_h^f(
    (\bld u_h^{j-1/2}, \widehat{\bld u}_h^{j-1/2}),(\bld v_h, \widehat{\bld v}_h) )
    -(p_h^{f,j-1/2}, \nabla\cdot \bld v_h)_f
    &\\
\;\;    -(\nabla\cdot\bld u_h^{j-1/2}, q_h^f)_f\;\;
    + 2\mu^s A_h^s(
   (\bld \eta^{s,j-1/2}_h, \widehat{\bld \eta}^{s,j-1/2}_h),(\bld v_h, \widehat{\bld v}_h) )
   +\lambda^s(\nabla \cdot \bld \eta_h^{s,j-1/2}, 
   \nabla \cdot\bld v_h)_s
    &\;
    =\;  (\bld f^{j-1/2}, \bld v_h), \nonumber\\
  \label{full-2}
    (\frac{\bld \eta_h^{s,j}-\bld \eta_h^{s,j-1}}{\delta t_{j-1}}-\bld
    u_h^{j-1/2}, \bld \xi_h^s)_s &\;= \;0,\\
  \label{full-3}
    \langle
    \frac{\widehat{\bld \eta}_h^{s,j}-\widehat{\bld \eta}_h^{s,j-1}}{\delta t_{j-1}}-
    \widehat{\bld u}_h^{j-1/2}, 
    \widehat{\bld \xi}^s_h\rangle_s&\;=\;0,
  \end{alignat}
  for all 
$(\bld v_h, \widehat{\bld v}_h, q_h^f, \bld \xi_h^s, \widehat{\bld
\xi}_h^s)\in \bld V_{h,0}^k\times \widehat{\bld V}_{h,0}^{k-1}\times 
Q_h^{k-1,f}\times\bld V_{h,0}^{k,s}\times \widehat{\bld V}_{h,0}^{k-1,s} 
$,
where 
$$
\bld u_h^{j-1/2}:=\frac12(\bld u_h^j+\bld u_h^{j-1}),\;\; 
\bld \eta_h^{s,j-1/2}:=\frac12(\bld \eta_h^{s,j}+\bld \eta_h^{s,j-1}), \;\;
\widehat{\bld \eta}_h^{s,j-1/2}:=\frac12(
\widehat{\bld \eta}_h^{s,j}+\widehat{\bld \eta}_h^{s,j-1}), 
$$
\end{subequations}

We have the following result on the energy stability of the 
fully discrete scheme \eqref{full}.
\begin{lemma}[Stability for the fully discrete scheme]
  \label{lemma:stab-full}
Let $(\bld u_h^{0}, 
\bld \eta_h^{s,0}, \widehat{\bld
\eta}_h^{s,0})\in \bld V_{h,0}^k \times\bld V_{h,0}^{k,s}\times
\widehat{\bld V}_{h,0}^{k-1,s} 
$ be a proper projection of the initial data 
in \eqref{init} such that 
$\nabla\cdot \bld u_h^0|_{\Th^f} = 0$.
  For any positive integer $j\in \mathbb{Z}_+$,  let $(\bld u_h^{j},
  \widehat{\bld u}_h^{j-1/2}, p_h^{f,j-1/2}, \bld \eta_h^{s,j}, 
  \widehat{\bld \eta}_h^{s,j})\in 
\bld V_{h,0}^k\times \widehat{\bld V}_{h,0}^{k-1}\times 
Q_h^{k-1,f}\times\bld V_{h,0}^{k,s}\times \widehat{\bld V}_{h,0}^{k-1,s} 
$ be the numerical solution to the fully discrete scheme \eqref{full}.
Then, the velocity approximation on the fluid domain is exactly divergence free: 
  \begin{align}
    \label{div2}
    \nabla\cdot \bld u_h^j|_{\Th^f} = 0,
  \end{align}
  and the following energy identity holds true: 
  \begin{align}
    \label{ener2}
    \frac12\frac{E_h^j-E_h^{j-1}}{\delta t_{j-1}} = -2\mu^f 
    A_h^f((\bld u_h^{j-1/2}, \widehat{\bld u}_h^{j-1/2}),(\bld u_h^{j-1/2},
    \widehat{\bld u}_h^{j-1/2}))
    +(\bld f^{j-1/2}, \bld u_h),
  \end{align} where 
  $
  E_h^j := (\rho\bld u_h^j, \bld u_h^j)+
  \lambda^s(\nabla \cdot \bld \eta_h^{s,j}, \nabla \cdot \bld \eta^{s,j}_h)_s
 +2\mu^s
 A_h^s((\bld \eta^{s,j}_h, \widehat{\bld \eta}^{s,j}_h),(\bld \eta^{s,j}_h, \widehat{\bld
 \eta}^{s,j}_h))
  $
  is the total energy at time $t_j$.
\end{lemma}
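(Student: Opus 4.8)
The plan is to replay the semi-discrete argument of Lemma~\ref{lemma:stab-semi}, with the time derivative $\partial_t$ replaced by the Crank--Nicolson difference quotient and the product rule replaced by the two discrete telescoping identities
\begin{align*}
  (\bld a-\bld b,\; \tfrac12(\bld a+\bld b)) = \tfrac12\left(\|\bld a\|^2-\|\bld b\|^2\right),
  \qquad
  B(\tfrac12(\bld a+\bld b),\; \bld a-\bld b) = \tfrac12\left(B(\bld a,\bld a)-B(\bld b,\bld b)\right),
\end{align*}
which hold for any (weighted) $L^2$-inner product and any symmetric bilinear form $B$ (here $\bld a,\bld b$ may stand for pairs such as $(\bld\eta_h^{s},\widehat{\bld\eta}_h^s)$). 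The symmetry of $A_h^i$ built into the interior penalty form \eqref{hdg-diff} is exactly what licenses the second identity.

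\textbf{Divergence-free property and coupling.} Taking $q_h^f=\nabla\cdot\bld u_h^{j-1/2}|_{\Th^f}\in Q_h^{k-1,f}$ in \eqref{full-1} isolates the term $(\nabla\cdot\bld u_h^{j-1/2},\nabla\cdot\bld u_h^{j-1/2})_f$, forcing $\nabla\cdot\bld u_h^{j-1/2}|_{\Th^f}=0$. Since $\bld u_h^{j-1/2}=\tfrac12(\bld u_h^j+\bld u_h^{j-1})$ and $\nabla\cdot\bld u_h^{0}|_{\Th^f}=0$ by hypothesis on the projected initial data, an induction on $j$ yields \eqref{div2}. Next, because $\bld u_h^{j-1/2}|_{\Th^s}\in\bld V_{h,0}^{k,s}$ and $\widehat{\bld u}_h^{j-1/2}|_{\Eh^s}\in\widehat{\bld V}_{h,0}^{k-1,s}$, I test \eqref{full-2} and \eqref{full-3} with $\bld\xi_h^s=\tfrac{\bld\eta_h^{s,j}-\bld\eta_h^{s,j-1}}{\delta t_{j-1}}-\bld u_h^{j-1/2}|_{\Th^s}$ and the analogous $\widehat{\bld\xi}_h^s$ to obtain the exact identities $\bld u_h^{j-1/2}|_{\Th^s}=\tfrac{\bld\eta_h^{s,j}-\bld\eta_h^{s,j-1}}{\delta t_{j-1}}$ and $\widehat{\bld u}_h^{j-1/2}|_{\Eh^s}=\tfrac{\widehat{\bld\eta}_h^{s,j}-\widehat{\bld\eta}_h^{s,j-1}}{\delta t_{j-1}}$, the discrete counterparts of $\bld u^s=\partial_t\bld\eta^s$.

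\textbf{Energy identity.} I then take $(\bld v_h,\widehat{\bld v}_h)=(\bld u_h^{j-1/2},\widehat{\bld u}_h^{j-1/2})$ in \eqref{full-1}. The divergence-free property \eqref{div2} cancels both $(p_h^{f,j-1/2},\nabla\cdot\bld u_h^{j-1/2})_f$ and the $q_h^f$-term, and the mass term collapses via the first telescoping identity to $\tfrac{1}{2\delta t_{j-1}}\big((\rho\bld u_h^j,\bld u_h^j)-(\rho\bld u_h^{j-1},\bld u_h^{j-1})\big)$. Substituting the coupling identities replaces the midpoint velocity in the two structural terms by the displacement difference quotient, whereupon the second telescoping identity applied to the symmetric forms $2\mu^s A_h^s$ and $\lambda^s(\nabla\cdot\,\cdot,\nabla\cdot\,\cdot)_s$ produces exactly $\tfrac{1}{2\delta t_{j-1}}(E_h^j-E_h^{j-1})$; moving the fluid dissipation $2\mu^f A_h^f((\bld u_h^{j-1/2},\widehat{\bld u}_h^{j-1/2}),(\bld u_h^{j-1/2},\widehat{\bld u}_h^{j-1/2}))$ and the forcing $(\bld f^{j-1/2},\bld u_h^{j-1/2})$ to the right-hand side then gives \eqref{ener2}. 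The only genuinely new ingredient beyond Lemma~\ref{lemma:stab-semi} is the pair of telescoping identities; no coercivity or inf-sup estimate is invoked, so there is no analytic obstacle, and the one point requiring care is the exact matching of the midpoint velocity with the displacement difference quotient on the structure, which relies on $\bld u_h^{j-1/2}$ and $\widehat{\bld u}_h^{j-1/2}$ lying in the structure test spaces and on the symmetry of $A_h^s$.
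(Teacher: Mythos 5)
Your proof is correct and follows exactly the route the paper intends: the authors omit the argument, stating only that it ``follows the same line as the semi-discrete case,'' and your write-up supplies precisely the missing details (the induction from $\nabla\cdot\bld u_h^{j-1/2}|_{\Th^f}=0$ to $\nabla\cdot\bld u_h^{j}|_{\Th^f}=0$ using the initial condition, and the discrete telescoping identities replacing the chain rule). No gaps.
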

\begin{proof}
  The proof follows the same line as those for the semi-discrete case in
  Lemma \ref{lemma:stab-semi}, which we omit for simplicity. 
\end{proof}

\subsubsection{Efficient implementation of the fully discrete
scheme \eqref{full}}
\label{sub:imp}
We now present an efficient implementation of the scheme 
\eqref{full} whose globally coupled linear system consists of 
DOFs for 
the normal component of the velocity approximation, 
the tangential component of the hybrid velocity approximation
on the mesh skeleton, and one pressure DOF per element
on the mesh.

We need the following result on the characterization of the 
fully discrete solution.
\begin{lemma}[Characterization of the fully discrete solution]
  \label{lemma:char}
  \begin{subequations}
  For any positive integer $j\in \mathbb{Z}_+$,  let $(\bld u_h^{j},
  \widehat{\bld u}_h^{j-1/2}, p_h^{f,j-1/2}, \bld \eta_h^{s,j}, 
  \widehat{\bld \eta}_h^{s,j})\in 
\bld V_{h,0}^k\times \widehat{\bld V}_{h,0}^{k-1}\times 
Q_h^{k-1,f}\times\bld V_{h,0}^{k,s}\times \widehat{\bld V}_{h,0}^{k-1,s} 
$ be the numerical solution to the fully discrete scheme \eqref{full}.
Then, 
$(\bld u_h^{j-1/2}, \widehat{\bld u}_h^{j-1/2},p_h^{f,j-1/2})
\in 
\bld V_{h,0}^k\times \widehat{\bld V}_{h,0}^{k-1}\times 
Q_h^{k-1,f}
$ is the unique solution to the following equations:
  \begin{alignat}{2}
  \label{fullX}
  &(2\rho \frac{\bld u_h^{j-1/2}}{\delta t_{j-1}}, \bld v_h)+ 
    2\mu^f A_h^f(
    (\bld u_h^{j-1/2}, \widehat{\bld u}_h^{j-1/2}),(\bld v_h, \widehat{\bld
    v}_h) )\\
  &    -(p_h^{f,j-1/2}, \nabla\cdot \bld v_h)_f
-(\nabla\cdot\bld u_h^{j-1/2}, q_h^f)_f
\;\;\nonumber\\
  &
+\frac12\delta t_{j-1}\left( 2\mu^s A_h^s(
   (\bld u^{j-1/2}_h, \widehat{\bld u}^{j-1/2}_h),(\bld v_h, \widehat{\bld v}_h) )
   +\lambda^s(\nabla \cdot \bld u_h^{j-1/2}, 
 \nabla \cdot\bld v_h)_s\right)
    \;
    =\;  
    F^{j-1/2}((\bld v_h, \widehat{\bld v}_h))\nonumber
%    (\bld f^{j-1/2}, \bld v_h), \nonumber\\
%    + 2\mu^s A_h^s(
%   (\bld \eta^{s,j-1/2}_h, \widehat{\bld \eta}^{s,j-1/2}_h),(\bld v_h, \widehat{\bld v}_h) )
%   +\lambda^s(\nabla \cdot \bld \eta_h^{s,j-1/2}, 
%   \nabla \cdot\bld v_h)_s
  \end{alignat}
  for all 
$(\bld v_h, \widehat{\bld v}_h, q_h^f)\in \bld V_{h,0}^k\times \widehat{\bld V}_{h,0}^{k-1}\times 
Q_h^{k-1,f}$, where the right hand side 
\begin{align}
  \label{source}
    F^{j-1/2}((\bld v_h, \widehat{\bld v}_h))
    =&\;
    (2\rho \frac{\bld u_h^{j-1}}{\delta t_{j-1}}, \bld v_h)
    +(\bld f^{j-1/2}, \bld v_h)\\
     &\;
    - \left( 2\mu^s A_h^s(
   (\bld \eta^{s,j-1}_h, \widehat{\bld \eta}^{s,j-1}_h),(\bld v_h, \widehat{\bld v}_h) )
   +\lambda^s(\nabla \cdot \bld \eta_h^{s,j-1}, 
 \nabla \cdot\bld v_h)_s\right),\nonumber
\end{align}
where $\bld f^{j-1/2}$ is the source term evaluated at time 
$t_{j-1/2}$.
Moreover, the velocity and displacement approximations at time 
$t_j$ satisfy the following relations:
\begin{align}
  \label{u-recov}
  \bld u_h^{j} = &\; 2\bld u_h^{j-1/2}-\bld u_h^{j-1},
  \;\;
  \bld \eta_h^{s,j} = \; 
  \bld \eta_h^{s,j-1}+\delta t_{j-1}\bld u_h^{j-1/2}|_{\Th^s},\;\;
  \widehat{\bld \eta}_h^{s,j} = \; 
  \widehat{ \bld \eta}_h^{s,j-1}+\delta t_{j-1}\widehat{\bld
  u}_h^{j-1/2}|_{\Th^s}.
\end{align}
  \end{subequations}
\end{lemma}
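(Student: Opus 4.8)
The plan is to eliminate the displacement unknowns and the end-of-step velocity so that \eqref{full} collapses to the three-field problem \eqref{fullX} in $(\bld u_h^{j-1/2},\widehat{\bld u}_h^{j-1/2},p_h^{f,j-1/2})$, and then to prove that the reduced problem is uniquely solvable by a symmetric saddle-point argument. I would first establish the recovery relations \eqref{u-recov}. The identity $\bld u_h^{j}=2\bld u_h^{j-1/2}-\bld u_h^{j-1}$ is just the definition of the midpoint value rearranged. For the displacement, the quantity $\frac{\bld \eta_h^{s,j}-\bld \eta_h^{s,j-1}}{\delta t_{j-1}}-\bld u_h^{j-1/2}|_{\Th^s}$ belongs to $\bld V_{h,0}^{k,s}$; choosing it as the test function $\bld \xi_h^s$ in \eqref{full-2} makes its $L^2(\Omega^s)$-norm vanish, which yields the second relation in \eqref{u-recov}. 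The analogous choice of $\widehat{\bld \xi}_h^s$ in \eqref{full-3} produces the third relation on the structure skeleton.

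I would then substitute these into the momentum equation \eqref{full-1}. Using $\frac{\bld u_h^{j}-\bld u_h^{j-1}}{\delta t_{j-1}}=\frac{2}{\delta t_{j-1}}(\bld u_h^{j-1/2}-\bld u_h^{j-1})$ together with $\bld \eta_h^{s,j-1/2}=\bld \eta_h^{s,j-1}+\tfrac12\delta t_{j-1}\,\bld u_h^{j-1/2}|_{\Th^s}$ and its skeleton analogue, the bilinearity of $A_h^s$ and of the divergence form on $\Omega^s$ splits each structure term into a part proportional to $\bld u_h^{j-1/2}$, which joins the left-hand side of \eqref{fullX}, and a part carrying the known data $\bld u_h^{j-1},\bld \eta_h^{s,j-1},\widehat{\bld \eta}_h^{s,j-1}$, which (with $\bld f^{j-1/2}$) assembles the right-hand side $F^{j-1/2}$ of \eqref{source}. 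This step is routine algebra.

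The substance of the proof is the unique solvability of \eqref{fullX}. Since it is a square finite-dimensional linear system, it suffices to show that the homogeneous problem $F^{j-1/2}=0$ has only the trivial solution. Writing the velocity bilinear form as
\[
\mathcal{A}((\bld u,\widehat{\bld u}),(\bld v,\widehat{\bld v}))
=\tfrac{2}{\delta t_{j-1}}(\rho\bld u,\bld v)
+2\mu^f A_h^f((\bld u,\widehat{\bld u}),(\bld v,\widehat{\bld v}))
+\tfrac{\delta t_{j-1}}{2}\Big(2\mu^s A_h^s((\bld u,\widehat{\bld u}),(\bld v,\widehat{\bld v}))
+\lambda^s(\nabla\cdot\bld u,\nabla\cdot\bld v)_s\Big),
\]
I first choose $q_h^f=\nabla\cdot\bld u_h^{j-1/2}|_{\Th^f}\in Q_h^{k-1,f}$ in the incompressibility equation to get $\nabla\cdot\bld u_h^{j-1/2}|_{\Th^f}=0$, as in \eqref{div2}. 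Testing the momentum equation with $(\bld v_h,\widehat{\bld v}_h)=(\bld u_h^{j-1/2},\widehat{\bld u}_h^{j-1/2})$ then kills the pressure coupling and leaves $\mathcal{A}((\bld u_h^{j-1/2},\widehat{\bld u}_h^{j-1/2}),(\bld u_h^{j-1/2},\widehat{\bld u}_h^{j-1/2}))=0$. Every summand here is nonnegative---the mass term is positive definite because $\rho>0$ and $\delta t_{j-1}>0$, the $A_h^f,A_h^s$ terms are nonnegative by the coercivity \eqref{coercivity}, and $\lambda^s(\nabla\cdot\bld u_h^{j-1/2},\nabla\cdot\bld u_h^{j-1/2})_s\ge 0$---so the mass term forces $\bld u_h^{j-1/2}=0$. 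With $\bld u_h^{j-1/2}=0$, the vanishing of $A_h^f$ and $A_h^s$ and the coercivity \eqref{coercivity} (using $\Pi_h\mathsf{tang}(\widehat{\bld u}_h^{j-1/2})=\widehat{\bld u}_h^{j-1/2}$ for facet functions) force $\widehat{\bld u}_h^{j-1/2}$ to vanish on every interior fluid and structure facet, while the definition of $\widehat{\bld V}_{h,0}^{k-1}$ handles the exterior facets; hence $\widehat{\bld u}_h^{j-1/2}=0$.

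Finally, with $\bld u_h^{j-1/2}=\widehat{\bld u}_h^{j-1/2}=0$ the momentum equation reduces to $(p_h^{f,j-1/2},\nabla\cdot\bld v_h)_f=0$ for all $\bld v_h\in\bld V_{h,0}^k$, so $p_h^{f,j-1/2}=0$ follows once the divergence of $\bld V_{h,0}^k$ is surjective onto $Q_h^{k-1,f}$. I expect this last step to be the delicate point: one must rule out a spurious constant-pressure mode. This is where the FSI setting helps---because the normal velocity is left free on the interface facets $\Gamma_h$, flux through $\Gamma_h$ is admissible and no zero-mean constraint is imposed on $\nabla\cdot\bld v_h|_{\Th^f}$ (unlike an enclosed all-Dirichlet fluid box), so I would invoke the inf-sup stability of the Lehrenfeld--Sch\"oberl div-conforming pair together with this interface freedom. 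Uniqueness for the square system \eqref{fullX} then gives existence, and the recovery relations \eqref{u-recov} reconstruct the full solution of \eqref{full}.
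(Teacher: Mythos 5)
Your proposal is correct and, for the part the paper actually proves, follows essentially the same route: derive the recovery relations \eqref{u-recov} from the definition of $\bld u_h^{j-1/2}$ and from \eqref{full-2}--\eqref{full-3} (exploiting that the velocity and displacement live in the same finite element spaces, so the residual itself is an admissible test function), then substitute into \eqref{full-1} and reorder to obtain \eqref{fullX}. The one genuine addition in your write-up is the verification of unique solvability of the reduced system, which the lemma asserts but the paper's proof omits; your argument for it (divergence-free test for the homogeneous problem, the mass term killing $\bld u_h^{j-1/2}$, coercivity \eqref{coercivity} killing $\widehat{\bld u}_h^{j-1/2}$, and the inf-sup property of the $H(\mathrm{div})$-conforming pair with free normal traces on $\Gamma_h$ ruling out a spurious constant pressure mode) is sound and correctly identifies the interface freedom as the reason no zero-mean constraint arises on $Q_h^{k-1,f}$.
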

\begin{proof}
  The relations \eqref{u-recov} are direct consequences
  of the definition of $\bld u_h^{j-1/2}$, the same choice of the finite
  element spaces for velocity and displacement, and 
  the equations \eqref{full-2}--\eqref{full-3}.
  Plugging in these relations back to the equations \eqref{full-1}, and
  reordering the terms, we recover the equations \eqref{fullX}.
  This completes the proof.
\end{proof}

\begin{remark}[Connection with the coupled momentum method]
The idea of using the same finite element space
for displacement and velocity approximations
to
eliminate the displacement unknowns in the global linear system was
originated in the coupled momentum method of 
Figueroa et al. \cite{figueroa2006coupled}, 
where they considered an FSI problem with thin structure.
See also related work in \cite{NobileVergara08}.
\end{remark}

With the help of Lemma \ref{lemma:char}, we proceed to implement 
the fully discrete scheme \eqref{full} as follows:
Let $(\bld u_h^{0}, 
\bld \eta_h^{s,0}, \widehat{\bld
\eta}_h^{s,0})\in \bld V_{h,0}^k \times\bld V_{h,0}^{k,s}\times
\widehat{\bld V}_{h,0}^{k-1,s} 
$ be a proper projection of the initial data 
in \eqref{init}.
For $j = 1,2,\cdots$, we proceed the following three steps to 
advance solution from time level $t_{j-1}$ to time level $t_{j}=t_{j-1}+\delta
t_{j-1}$:
\begin{itemize}
  \item [(1)] Determine the time step size 
    $\delta t_{j-1}$ and compute the right hand side
    $F^{j-1/2}$ in \eqref{source}.
  \item [(2)] 
    Solve 
    for 
$(\bld u_h^{j-1/2}, \widehat{\bld u}_h^{j-1/2},p_h^{f,j-1/2})
\in 
\bld V_{h,0}^k\times \widehat{\bld V}_{h,0}^{k-1}\times 
Q_h^{k-1,f}
$ using equations \eqref{fullX}.
\item [(3)] Recover velocity and displacement approximations
  at time $t_{j}$ using the relations \eqref{u-recov}.
\end{itemize}

The major computational cost of the above implementation lies in 
the global linear system solver in step (2).
To make the linear system problem easier to solve, we
introduce an equivalent 
characterization of the solution to the equations \eqref{fullX}
in Lemma \ref{lemma:mod} below. In the actual implementation, 
we solve the equivalent linear system problem \eqref{fullY} in step (2) 
instead of \eqref{fullX}.
In the next section, we will design
an efficient block AMG preconditioner for this system.
%To make the preconditioner robust with respect to the 
%lam\'e parameter $\lambda^s$ in the incompressible limit 
%$\lambda^s\rightarrow \infty$, we introduce a Lagrange multiplier 
%$p_h^s = -\frac{1}{\delta t\lambda^s}\nabla\cdot \bld u_h|_{\Th^s}$
%to
%slightly modify the linear system \eqref{fullX},
%which is
%well-known technique in linear elasticity, 
%cf. \cite[Chapter VI.3]{braess2013finite}.
%The modified linear system is documented in the next result.

\begin{lemma}[A modified implementation of the scheme \eqref{fullX}]
  \label{lemma:mod}
For any positive integer $j\in \mathbb{Z}_+$,  
  let 
$(\bld u_h^{j-1/2}, \widehat{\bld u}_h^{j-1/2},p_h^{j-1/2})
\in 
\bld V_{h,0}^k\times \widehat{\bld V}_{h,0}^{k-1}\times 
Q_h^{k-1}$ be the unique solution to the following 
equations:
  \begin{alignat}{2}
  \label{fullY}
  &(2\rho \frac{\bld u_h^{j-1/2}}{\delta t_{j-1}}, \bld v_h)+ 
    2\mu^f A_h^f(
    (\bld u_h^{j-1/2}, \widehat{\bld u}_h^{j-1/2}),(\bld v_h, \widehat{\bld
    v}_h) )
+\delta t_{j-1}\mu^s A_h^s(
   (\bld u^{j-1/2}_h, \widehat{\bld u}^{j-1/2}_h),(\bld v_h, \widehat{\bld v}_h) )
    \\
  &    -(p_h^{j-1/2}, \nabla\cdot \bld v_h)
-(\nabla\cdot\bld u_h^{j-1/2}, q_h)
-(\frac{2}{\delta t_{j-1}\lambda^s}p_h^{j-1/2}, q_h)_s
\;\;=\;  
    F^{j-1/2}((\bld v_h, \widehat{\bld v}_h))\nonumber
  \end{alignat}
  for all 
$(\bld v_h, \widehat{\bld v}_h, q_h)\in \bld V_{h,0}^k\times \widehat{\bld V}_{h,0}^{k-1}\times 
Q_h^{k-1}$, where the right hand side $F^{j-1/2}$
is defined in \eqref{source}.
Then 
$(\bld u_h^{j-1/2}, \widehat{\bld u}_h^{j-1/2},p_h^{j-1/2}|_{\Th^f})
\in 
\bld V_{h,0}^k\times \widehat{\bld V}_{h,0}^{k-1}\times 
Q_h^{k-1,f}$  is the unique 
solution to the equations \eqref{fullX}.
\end{lemma}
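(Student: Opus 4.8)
The plan is to show that the additional global pressure unknown of \eqref{fullY} agrees with the fluid pressure on $\Omega^f$ and, on $\Omega^s$, is a locally eliminable quantity whose elimination reproduces \eqref{fullX} term by term. The key structural observation is that, since $\nabla\cdot[\mathcal{P}^k(K)]^d\subset\mathcal{P}^{k-1}(K)$, both $\nabla\cdot\bld u_h^{j-1/2}|_{\Th^s}$ and $p_h^{j-1/2}|_{\Th^s}$ lie in the \emph{same} local space $Q_h^{k-1,s}$, so a variational identity tested against all of $Q_h^{k-1,s}$ forces a pointwise relation between them.

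First I would test \eqref{fullY} with $(\bld v_h,\widehat{\bld v}_h,q_h)=(0,0,q_h)$ for $q_h\in Q_h^{k-1,s}$ extended by zero on $\Omega^f$. All velocity terms and the fluid pressure couplings drop out, leaving $-(\nabla\cdot\bld u_h^{j-1/2},q_h)_s-(\tfrac{2}{\delta t_{j-1}\lambda^s}p_h^{j-1/2},q_h)_s=0$ for all such $q_h$. Because both arguments belong to $Q_h^{k-1,s}$, this gives the pointwise local relation
\[
p_h^{j-1/2}|_{\Th^s}=-\frac{\delta t_{j-1}\lambda^s}{2}\,\nabla\cdot\bld u_h^{j-1/2}|_{\Th^s}.
\]
Next I would substitute this into the momentum equation of \eqref{fullY}. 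Splitting the global coupling as $-(p_h^{j-1/2},\nabla\cdot\bld v_h)=-(p_h^{j-1/2},\nabla\cdot\bld v_h)_f-(p_h^{j-1/2},\nabla\cdot\bld v_h)_s$ and inserting the local relation into the structure part converts $-(p_h^{j-1/2},\nabla\cdot\bld v_h)_s$ into $+\tfrac{\delta t_{j-1}\lambda^s}{2}(\nabla\cdot\bld u_h^{j-1/2},\nabla\cdot\bld v_h)_s$, which is exactly the Lamé term in \eqref{fullX}; the leftover fluid part $-(p_h^{j-1/2},\nabla\cdot\bld v_h)_f$ is the fluid pressure coupling of \eqref{fullX} upon setting $p_h^{f,j-1/2}:=p_h^{j-1/2}|_{\Th^f}$. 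The viscous operators, the mass term, and $F^{j-1/2}$ coincide term by term (using $\delta t_{j-1}\mu^s=\tfrac12\delta t_{j-1}\cdot2\mu^s$), and the hybrid-velocity test equations are identical in both formulations since no pressure term sees $\widehat{\bld v}_h$. Finally, testing \eqref{fullY} with $q_h\in Q_h^{k-1,f}$ supported on $\Omega^f$ annihilates the structure regularization term and yields $-(\nabla\cdot\bld u_h^{j-1/2},q_h)_f=0$, which is the fluid incompressibility constraint of \eqref{fullX}. Hence $(\bld u_h^{j-1/2},\widehat{\bld u}_h^{j-1/2},p_h^{j-1/2}|_{\Th^f})$ satisfies \eqref{fullX}.

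To finish, I would upgrade this to the claimed identification of unique solutions via a bijection argument. The computation above sends any solution of \eqref{fullY} to a solution of \eqref{fullX} by fluid restriction, while the local relation shows the eliminated component $p_h^{j-1/2}|_{\Th^s}$ is uniquely recovered from the velocity; conversely, given any solution of \eqref{fullX}, defining $p_h|_{\Th^s}$ by the displayed formula and $p_h|_{\Th^f}:=p_h^{f,j-1/2}$ reverses the substitution and produces a solution of \eqref{fullY}. This establishes a one-to-one correspondence between the solution sets, so the well-posedness of \eqref{fullX} asserted in Lemma \ref{lemma:char} transfers to \eqref{fullY} and pins down the solutions as stated. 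I expect the only delicate point to be justifying the pointwise elimination—i.e.\ confirming that $\nabla\cdot\bld u_h^{j-1/2}|_{\Th^s}$ genuinely lands in $Q_h^{k-1,s}$ so that the structure test equation forces the exact local relation rather than merely an $L^2$-projected one—while the remainder is a matching of terms between the two weak forms.
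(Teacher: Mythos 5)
Your proposal is correct and follows essentially the same route as the paper's proof: test \eqref{fullY} against $q_h\in Q_h^{k-1,s}$, use the inclusion $\nabla\cdot\bld V_{h,0}^{k,s}|_{\Th^s}\subset Q_h^{k-1,s}$ to upgrade the resulting orthogonality to the pointwise relation $p_h^{j-1/2}|_{\Th^s}=-\tfrac{\delta t_{j-1}\lambda^s}{2}\nabla\cdot\bld u_h^{j-1/2}|_{\Th^s}$, and substitute back to recover \eqref{fullX}. Your added remarks on the bijection between solution sets and on verifying that the divergence genuinely lands in $Q_h^{k-1,s}$ are sound elaborations of points the paper leaves implicit, not deviations.
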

\begin{proof}
  Taking test function $q_h\in Q_h^{k-1,s}$ in equations \eqref{fullY}, we get
$$
(\nabla \cdot \bld u_h^{j-1/2}+\frac{2}{\delta t_{j-1}\lambda^s}
p_h^{j-1/2}, 
q_h)_s = 0 \quad \forall q_h\in Q_h^{k-1,s}.
$$
Since $\nabla\cdot \bld u_h^{j-1/2}|_{\Th^s} \in Q_h^{k-1,s}$,
the above equation implies that 
$$
p_h|_{\Th^s} = -\frac{\delta t_{j-1}\lambda^s}{2}\nabla\cdot \bld
u_h^{j-1/2}|_{\Th^s}.
$$
Hence, 
$$
-(p_h^{j-1/2}, \nabla \cdot \bld v_h)
=
-(p_h^{j-1/2}, \nabla \cdot \bld v_h)_f
+\frac{\delta t_{j-1}}{2}\lambda^s(\nabla\cdot\bld u_h^{j-1/2}, \nabla
\cdot \bld v_h)_s.$$
Plugging this expression back to the equations \eqref{fullY}, we recover
the equations in \eqref{fullX} for all 
$(\bld v_h, \widehat{\bld v}_h, q_h)\in \bld V_{h,0}^k\times \widehat{\bld V}_{h,0}^{k-1}\times 
Q_h^{k-1,f}$. This completes the proof.
\end{proof}

\begin{remark}[Other high-order 
  implicit time stepping strategies]
 We concentrated on the discretization and implementation of the 
 Crank-Nicolson time stepping \eqref{full} in this subsection.
 Alternatively, one can use any other high-order implicit time stepping
 strategies, like the backward different formula (BDF) or the 
 diagonally implicit Runge-Kutta methods \cite{HairerWanner10}.
 The third-order BDF3 scheme reads as follows (assuming uniform time step
 size $\delta t>0$):
 For $j\ge 3$, 
given approximations 
$(\bld u_h^{j-m}, 
\bld \eta_h^{s,j-m}, \widehat{\bld
\eta}_h^{s,j-m})\in \bld V_{h,0}^k \times\bld V_{h,0}^{k,s}\times
\widehat{\bld V}_{h,0}^{k-1,s} 
$ at time $t_{j-m}=(j-m)\,\delta t$ 
for $m=1,2,3$, 
we proceed to find the solution 
$(\bld u_h^{j},\widehat{\bld u}_h^j, q_h^{f,j}, 
\bld \eta_h^{s,j}, \widehat{\bld
\eta}_h^{s,j})\in \bld V_{h,0}^k \times\widehat{\bld V}_{h,0}^{k-1}\times Q_h^f\times
\bld V_{h,0}^{k,s}\times
\widehat{\bld V}_{h,0}^{k-1,s} 
$ at time $t_{j}=j\,\delta t$ 
 such that the following
equations holds:
\begin{subequations}
  \label{fullB}
  \begin{alignat}{2}
  \label{fullB-1}
  (\rho \mathtt{D}_t\bld u_h^j, \bld v_h)+ 
    2\mu^f A_h^f(
    (\bld u_h^{j}, \widehat{\bld u}_h^{j}),(\bld v_h, \widehat{\bld v}_h) )
    -(p_h^{f,j}, \nabla\cdot \bld v_h)_f
    &\\
\;\;    -(\nabla\cdot\bld u_h^{j}, q_h^f)_f\;\;
    + 2\mu^s A_h^s(
   (\bld \eta^{s,j}_h, \widehat{\bld \eta}^{s,j}_h),(\bld v_h, \widehat{\bld v}_h) )
   +\lambda^s(\nabla \cdot \bld \eta_h^{s,j}, 
   \nabla \cdot\bld v_h)_s
    &\;
    =\;  (\bld f^{j}, \bld v_h), \nonumber\\
  \label{fullB-2}
    (%\frac{\bld \eta_h^{s,j}-\bld \eta_h^{s,j-1}}{\delta t_{j-1}}
    \mathtt{D}_t\bld \eta_h^{s,j}
    -\bld
    u_h^{j-1/2}, \bld \xi_h^s)_s &\;= \;0,\\
  \label{fullB-3}
    \langle
    \mathtt{D}_t\widehat{\bld \eta}_h^{s,j}
%    \frac{\widehat{\bld \eta}_h^{s,j}-\widehat{\bld \eta}_h^{s,j-1}}{\delta t_{j-1}}-
    \widehat{\bld u}_h^{j-1/2}, 
    \widehat{\bld \xi^s}_h\rangle_s&\;=\;0,
  \end{alignat}
  for all 
$(\bld v_h, \widehat{\bld v}_h, q_h^f, \bld \xi_h^s, \widehat{\bld
\xi}_h^s)\in \bld V_{h,0}^k\times \widehat{\bld V}_{h,0}^{k-1}\times 
Q_h^{k-1,f}\times\bld V_{h,0}^{k,s}\times \widehat{\bld V}_{h,0}^{k-1,s} 
$,
where 
$$
\mathtt{D}_t\bld \phi^{j}:=\frac{1}{\delta t}
\left(\frac{11}{6}\phi^j-3\phi^{j-1}+\frac32 \phi^{j-2}
-\frac13\phi^{j-3}\right)
$$
is the third order BDF 
discretization of the time derivative $\partial_t \phi$.
We can proceed along the same lines as in Subsection 
\ref{sub:imp} to implement the scheme \eqref{fullB} such that 
we only need to solve a global linear system 
of the form \eqref{fullY} in each time step.
\end{subequations}

\end{remark}
%%%%%%%%%%%%%%%%%%%%%%%%%%%%%%%%%%%%%%%%%%%%
\section{Preconditioning}
\label{sec:prec}
\subsection{Preliminaries}
In this section, we concentrate ourselves to the efficient solver for the
linear system problem \eqref{fullY}.
The same technique can be used to solve the related linear system 
for the scheme based on BDF3 time stepping \eqref{fullB}.
To simplify notation, we remove all temporal indices in this section.
Hence the linear system problem we are interested in have the following specific
form: Find  
$(\bld u_h, \widehat{\bld u}_h,p_h)
\in 
\bld V_{h,0}^k\times \widehat{\bld V}_{h,0}^{k-1}\times 
Q_h^{k-1}$ such that 
  \begin{alignat}{2}
  \label{fullZ}
  &(2\rho \frac{\bld u_h}{\delta t}, \bld v_h)+ 
    2\mu^f A_h^f(
    (\bld u_h, \widehat{\bld u}_h),(\bld v_h, \widehat{\bld
    v}_h) )
+\delta t\mu^s A_h^s(
   (\bld u_h, \widehat{\bld u}_h),(\bld v_h, \widehat{\bld v}_h) )
    \\
  &    -(p_h, \nabla\cdot \bld v_h)
-(\nabla\cdot\bld u_h, q_h)
-(\frac{2}{\delta t\,\lambda^s}p_h, q_h)_s
\;\;=\;  
    F((\bld v_h, \widehat{\bld v}_h))\nonumber
  \end{alignat}
  for all 
$(\bld v_h, \widehat{\bld v}_h, q_h)\in \bld V_{h,0}^k\times \widehat{\bld V}_{h,0}^{k-1}\times 
Q_h^{k-1}$. Note that all the 
finite element spaces are defined on the whole domain $\Omega$.
To further simplify notation, we denote 
$$\mu:= \left\{\begin{tabular}{ll}
    $\mu^f$ & on $\Omega^f$\\
    $0.5 \delta t\,\mu^s$ & on $\Omega^s$
\end{tabular}\right.,
\text{ and }
\gamma:= \left\{\begin{tabular}{ll}
    $0$ & on $\Omega^f$\\
    $2/\delta t/\lambda^s$ & on $\Omega^s$
\end{tabular}\right..
$$
We also denote 
$$
A_h^{\mu}((\bld u_h, \widehat{\bld u}_h),(\bld v_h, \widehat{\bld
    v}_h) ):=
    2\mu^f A_h^f(
    (\bld u_h, \widehat{\bld u}_h),(\bld v_h, \widehat{\bld
    v}_h) )
+\delta t\mu^s A_h^s(
   (\bld u_h, \widehat{\bld u}_h),(\bld v_h, \widehat{\bld v}_h)),
$$
which is an HDG discretization of the variable coefficient diffusion
operator $-\nabla\cdot(\mu\bld D(\bld u))$ on the whole domain $\Omega$.
Hence, the formulation \eqref{fullZ} simplifies to 
\begin{align}
  \label{fullF}
  &\frac{2}{\delta t}(\rho \bld u_h, \bld v_h)+ 
  A_h^{\mu}(
    (\bld u_h, \widehat{\bld u}_h),(\bld v_h, \widehat{\bld
    v}_h) )
  -(p_h, \nabla\cdot \bld v_h)
-(\nabla\cdot\bld u_h, q_h)
-(\gamma p_h, q_h)
=    F((\bld v_h, \widehat{\bld v}_h)).
\end{align}
The problem \eqref{fullF} can be rewritten in a matrix-vector formulation:
Find $[\underline{\mathsf{u}}_h; \mathsf{p}_h]\in  
\mathbb{R}^{N_u+N_p}$ such that 
\begin{align}
  \label{matrix}
  \left[\begin{tabular}{cc}
      $\mathsf{A}_{h}^\mu+\frac{2}{\delta t} 
      \mathsf{M}^{\rho}_h$ & $\mathsf{B}_h$ \\[.8ex]
    $\mathsf{B}_h^T$ & $-\mathsf{M}^{\gamma}_h$
    \end{tabular} 
  \right]
    \left[
      \begin{tabular}{l}
        $\underline{\mathsf u}_h$\\
      $\mathsf p_h$
    \end{tabular}
    \right]
    =\left[
      \begin{tabular}{l}
        $\mathsf{F}$\\
      $0$
    \end{tabular}
    \right],
\end{align}
where $\underline{\mathsf u}_h\in \mathbb{R}^{N_u}
$ is the coefficient vector for the 
compound velocity approximation 
$ \underline{\bld u}_h:= (\bld u_h, \widehat{\bld u}_h)
\in \bld V_{h,0}^k\times 
\widehat{\bld V}_{h,0}^{k-1}$ with 
$N_u$ being the dimension of the compound finite element space
$ \bld V_{h,0}^k\times 
\widehat{\bld V}_{h,0}^{k-1}$,
${\mathsf p}_h\in \mathbb{R}^{N_p}
$ is the coefficient vector for the 
pressure approximation
${p}_h\in {Q}_{h}^{k-1}$ with 
$N_p$ being the dimension of the finite element space
${Q}_{h}^{k-1}$. Moreover, 
the matrix $\mathsf{A}_h^\mu\in \mathbb{R}^{N_u\times N_u}$ 
is associated with the bilinear form 
$A_h^\mu(\underline{\bld u}_h, \underline{\bld v}_h)$, 
the matrix $\mathsf{M}_h^\rho\in \mathbb{R}^{N_u\times N_u}$ 
is associated with the bilinear form 
$(\rho \bld u_h, \bld v_h)$, 
the matrix $\mathsf{B}_h\in \mathbb{R}^{N_p\times N_u}$ 
is associated with the bilinear form 
$-(p_h, \nabla\cdot \bld v_h)$, 
the matrix $\mathsf{M}_h^{\gamma}\in \mathbb{R}^{N_p\times N_p}$ 
is associated with the bilinear form 
$(\gamma p_h, q_h)$, and 
the vector $\mathsf{F}\in \mathbb{R}^{N_u}$ is associated 
with the linear form $F(\underline{\bld v}_h)$.
The big matrix in the linear system \eqref{matrix} has a block structure and is
symmetric and indefinite, with the 1-1 block 
      $\mathsf{A}_{h}^\mu+\frac{2}{\delta t} 
      \mathsf{M}^{\rho}_h$ 
      being symmetric positive definite (SPD), 
and the 2-2 block $-\mathsf{M}^{\gamma}_h$ being symmetric and negative semi-definite.

A popular method to solve  the symmetric saddle point problem
\eqref{matrix}, which we adopt in this work,
is to use a preconditioned MinRes solver \cite{Saad03}
with the following block diagonal preconditioner
\cite{Murphy00,Ipsen01}:
\begin{align}
  \label{prec}
  \mathsf{P} = 
  \left[\begin{tabular}{cc}
      $\widehat{\mathsf {iA}}
      $ 
      & $0$ \\[.8ex]
      $0$ & $\widehat{\mathsf{iS}}$
    \end{tabular} 
  \right]
\end{align}
where $\widehat{\mathsf{iA}}$ is an appropriate preconditioner of the SPD matrix 
      $\mathsf A:=\mathsf{A}_{h}^\mu+\frac{2}{\delta t} 
      \mathsf{M}^{\rho}_h,$  
      and 
      $\widehat{\mathsf{iS}}$ is an appropriate preconditioner of the
      (dense) Schur complement SPD matrix 
$\mathsf S :=\mathsf{B}_h^T(\mathsf{A}_{h}^\mu+\frac{2}{\delta t} 
\mathsf{M}^{\rho}_h)^{-1}\mathsf{B}_h
+\mathsf{M}_h^{\gamma}
.$ 
The detailed construction of the 
preconditioner for the Schur complement (pressure) matrix $\mathsf{S}$ is discussed in Subsection \ref{sec:s-block}, where 
we borrow ideas in the literature on preconditioning 
the closely related, generalized Stokes problem \cite{Cahouet88,Elman01,Olshanskii06}.
The detailed construction of the 
preconditioner for the SPD velocity matrix $\mathsf{A}$
is discussed in Subsection \ref{sec:a-block}, where 
we use
an auxiliary space preconditioner \cite{Xu96} 
along with algebraic multigrid.
We mention that for polynomial degree $k\ge 2$, the preconditioned 
MinRes solver is applied to the static condensed subsystem of 
\eqref{matrix}, see the discussion in Remark \ref{rk:condense}.

\begin{remark}[Connection with a generalized Stokes interface problem]
  The discretization \eqref{fullF}, or the form \eqref{matrix},
  is closely related to
  a divergence-conforming  HDG discretization of a
  generalized Stokes interface problem (with a fixed interface)
  with variable density $\rho$ and variable viscosity 
  $\mu$, c.f. 
  \cite{Fu20}. The only difference between the divergence-conforming HDG linear system for 
  the generalized Stokes interface problem and the 
  current FSI problem is that the pressure block is {\it zero} for the
former, while it is $-\mathcal{M}_{h}^{\gamma}$
for the latter, which is a symmetric negative semi-definite 
matrix and 
represents the compressibility of the structure.
A non-zero pressure block also appears in the 
finite element discretization of the Stokes problem
using pressure-stabilized methods, or the linear elasticity problem with 
a  displacement-pressure formulation.
\end{remark}

\begin{remark}[Static condensation for $k\ge 2$]
  \label{rk:condense}
  When polynomial degree $k\ge 2$, we shall 
  solve the linear system problem 
  \eqref{matrix} using static condensation to locally eliminate
  interior  velocity DOFs and high-order pressure DOFs 
  \cite{LehrenfeldSchoberl16}.
The resulting global linear system after static condensation
consists of DOFs for the normal component of velocity
approximation (of degree $k$)
in  $\bld V_{h,0}^k$
and the tangential (hybrid) velocity approximation (of degree $k-1$)
in $\widehat{\bld V}_{h,0}^{k-1}$
on the mesh skeleton (facets), and 
cell-average of pressure approximations (of degree 0) on the mesh.
We denote the compound velocity space corresponding to 
the DOFs on mesh skeleton by $\underline{\bld V}_{h,0}^{k,
\mathsf{gl}}$ which is a subset of the compound space 
$\underline{\bld V}_{h,0}^k$. The global pressure space is 
simply the space of piecewise constants $Q_h^0$.
The linear system after condensation has a similar structure as that in
\eqref{matrix} with a reduced size. We shall apply the precondioned 
MinRes solver for the condensed system in this case.
For this case, the matrices $\mathsf{A}$
and $\mathsf{S}$ shall be understood to be defined on the
reduced spaces $\underline{\bld V}_{h,0}^{k,\mathsf{gl}}$ and $Q_h^{0}$, respectively.
\end{remark}

\subsection{Preconditioning the Schur complement pressure matrix
$\mathsf S$}
\label{sec:s-block}
The preconditioner $\widehat{\mathsf{iS}}$ acts on the 
piecewise constant global pressure space $Q_h^0$, and
is given as follows:
\begin{align}
  \label{s-pre}
  \widehat{\mathsf{iS}}
  =(\mathsf{M}_h^{\mu,\gamma})^{-1}
  +(\mathsf{N}_h^{\rho,\gamma})^{-1}, 
\end{align}
where 
$\mathsf{M_h^{\mu,\gamma}}$ is the
weighted mass matrix associated with the bilinear form 
$((\mu^{-1}+\gamma) p_h, q_h)$ on the piecewise constant 
global pressure space $Q_h^0$, 
%which is diagonal and its inversion is
%trival, 
and $\mathsf{N}_h^{\rho,\gamma}$ is the matrix
associated with the bilinear form 
\begin{align}
  \label{neumann}
(\gamma p_h, q_h)+\frac{\delta t}{2}
\sum_{F\in\Eh\backslash{\partial \Omega}}\int_{F}\frac{\{\rho^{-1}\}}{h}\jmp{p_h}
\jmp{q_h}\,\mathrm{ds},\quad \forall p_h, q_h\in Q_h^0,
\end{align}
where 
$\{\rho\}|_{F}:=\frac{\rho^++\rho^-}{\rho^+\rho^-}$ is the geometric
average of $\rho$, and 
$\jmp{\phi}=\phi^+-\phi^-$ is the jump of $\phi$ on an interior facet $F$.
Note that the mass matrix $\mathsf{M}^{\mu,\gamma}$ is diagonal and 
its inversion is trivial. 
Also, note that the bilinear form \eqref{neumann}
corresponds to the interior penalty discretization of the operator 
$\gamma p - \frac{\delta t}{2}\nabla \cdot (\rho^{-1}\nabla p)$
with a homogeneous Neumann boundary condition using the piecewise constant space $Q_h^0$. 
The jump term 
%$\sum_{F\in\Eh\backslash{\partial \Omega}}\int_{F}\frac{\{\rho^{-1}\}}{h}\jmp{p_h}
%\jmp{q_h}\,\mathrm{ds}$
in \eqref{neumann} was shown in \cite{Rusten96} to be 
spectrally equivalent to the operator 
$\frac{\delta t}{2}\mathsf{B}_h^T(\mathsf{M}_h^{\rho})^{-1}\mathsf{B}_h$ when
the density $\rho$ is uniformly bounded from above and below.
Hence, $(\mathsf{N}_h^{\rho,\gamma})^{-1}$ serves as a robust
preconditioner for the (dense) Schur complement matrix 
$\frac{\delta
t}{2}\mathsf{B}_h^T(\mathsf{M}_h^{\rho})^{-1}\mathsf{B}_h+\mathsf{M}_h^{\gamma}$.
In the actual numerical realization 
of  $(\mathsf{N}_h^{\rho,\gamma})^{-1}$
, we use the hypre's BoomerAMG preconditioner \cite{hypre, Henson02}
for the matrix $\mathsf{N}_h^{\rho,\gamma}$.

We note that the
pressure Schur complement preconditioner \eqref{s-pre} 
was initially introduced for the generalized Stokes problem (constant
density, constant viscosity, and $\gamma=0$) by 
Cahouet and Chabard \cite{Cahouet88}.
Robustness of this 
Cahouet-Chabard preconditioner 
for the generalized Stokes problem
with respect to variations in the mesh size
$h$ and time step size $\delta t$ was proven in \cite{Bramble97,Mardal04,
Olshanskii06}.
It was then generalized by Olshanskii et al. \cite{Olshanskii06}
to the generalized Stokes interface problem
(variable density, variable viscosity, and $\gamma =0$).
While a theoretical proof of the robustness of the preconditioner in \cite{Olshanskii06} for the variable density and viscosity
case was lacking due to the lack of regularity results for the stationary
Stokes interface problem, numerical results performed in 
\cite{Olshanskii06} seems to indicate that the preconditioner is 
robust also with respect to the jumps in viscosity and density
in large parameter ranges. 
Hence, our preconditioner \eqref{s-pre} can be considered as a 
generalization of the one in \cite{Olshanskii06} to take into account the
structure compressibility ($\gamma>0$ on $\Omega_s$) in the pressure block.

\subsection{Preconditioning the velocity stiffness matrix $\mathsf A$}
\label{sec:a-block}
The matrix $\mathsf{A}$ corresponds to the divergence-conforming 
HDG discretization of the elliptic operator $\frac{2}{\delta t}\rho\bld u
-2\nabla\cdot (\mu\mathbf{D}(\bld u))$. Here we propose to use 
the auxiliary space precondioner \cite{Xu96} developed in \cite{Fu20a}. 
The auxiliary space is the continuous linear Lagrange 
finite elements:
$$
\bld V_{h,0}^{cg} := \{\bld v\in \bld H_0^{1}(\Omega):\;\;
\bld v|_K\in [\mathcal{P}^1(K)]^d, \;\forall K\in \Th\}.
$$
The auxiliary space preconditioner for $\mathsf{A}$ is of the following
form:
\begin{align}
  \label{a-pre}
  \widehat{ \mathsf{iA}} = \mathsf{R} + \mathsf{P}\mathcal{A}^{-1}\mathsf{P}^T,
\end{align}
where $\mathsf{R}\in \mathbb{R}^{N_{u}^{\mathsf{gl}}\times 
N_{u}^{\mathsf{gl}}}$ is the (point) Gauss-Seidel smoother for the matrix
$\mathsf{A}$, with $N_{u}^\mathsf{gl}$
being the dimension of the reduced compound space 
$\underline{\bld V}_{h,0}^{k,\mathsf{gl}}$, 
the matrix $\mathcal{A}\in \mathbb{R}^{N_c\times N_c}
$ is the matrix associated with the 
following bilinear form on the auxiliary space $\bld V_{h,0}^{cg}$:
$$
(\frac{2}{\delta t}\mathtt u_h, \mathtt v_h)
+ 2(\mu\bld D(\mathtt u_h), 
\bld D(\mathtt v_h)),\quad \forall \mathtt u_h,\mathtt v_h\in \bld
V_{h,0}^{cg},
$$
where $N_c$ is the dimension of $\bld V_{h,0}^{cg}$, 
and the matrix 
$\mathsf{P}\in \mathbb{R}^{N_u^{\mathsf{gl}}\times N_c}$
is associated with the projector
$\underline{\Pi}: 
\bld V_{h,0}^{cg}
\rightarrow \underline{\bld V}_{h,0}^{k,\mathsf{gl}} 
$, which is defined as follows: 
for any function 
$\mathtt u_h\in \bld V_{h,0}^{cg}$, find
$\underline{\Pi}\mathtt u_h 
= (\Pi\mathtt u_h, \widehat{\Pi}\mathtt u_h)\in 
\underline{\bld V}_{h,0}^{k,\mathsf{gl}}$ such that
\begin{subequations}
  \label{proj}
\begin{align}
\sum_{F\in\Eh}
\int_F
(\Pi \mathtt u_h\cdot \bld n)(\bld v_h \cdot\bld n)
\mathrm{ds}
=&\;
\sum_{F\in\Eh}
\int_F
(\mathtt u_h\cdot \bld n)(\bld v_h\cdot\bld n)
\mathrm{ds},\\
\sum_{F\in\Eh}
\int_F
\mathsf{tang}(\widehat{\Pi} \mathtt{{u}}_h)\cdot
\mathsf{tang}(\widehat{\bld v}_h)
\mathrm{ds}
=&\;
\sum_{F\in\Eh}
\int_F
\mathsf{tang}(\mathtt u_h)\cdot
\mathsf{tang}(\widehat{\bld v}_h)
\mathrm{ds},
\end{align}
\end{subequations}
for all $(\bld v_h, \widehat{\bld v}_h)\in
\underline{\bld V}_{h,0}^{k,\mathsf{gl}}.
$ 
Note that the projector is locally facet-by-facet defined, 
and the transformation matrix $\mathsf{P}$ is sparse.
For the numerical realization $\mathcal{A}^{-1}$, we 
again use hypre's BoomerAMG.
%With a proper choice of orthogonal bases
%for $\underline{\bld V}_h^{k,\mathsf{gl}}$, the 
%matrices in \eqref{proj} are diagonal, and the construction of 
%the projector $\underline{\Pi}$ does not involve any matrix inversion.
%We denote by $\mathsf{T}\in \mathbb{R}^{N_c\times {N_{u,\mathsf{gl}}}}$ the
%matrix associated with the projector $\underline{\Pi}$, where 
%$N_{u,\mathsf{gl}}$ is the dimension of the space 
%$\underline{\bld V}_h^{k,\mathsf{gl}}$.
%%%%%%%%%%%%%%%%%%%%%%%%%%%%%%%%%%%%%%%%%%%%
\section{Semidiscrete a priori error analysis}
\label{sec:err}
In this section, we present an a prior error analysis for the semidiscrete
scheme \eqref{semi}. To simplify notation, we write
\begin{align*}
	A\lesssim B
\end{align*}
to indicate that there exists a constant $C$, independent of mesh size $h$,
material parameters $\rho^{f/s}$, $\mu^{f/s}$, $\lambda^s$, and the numerical solution, such that $A\le CB$.

We denote the following (semi)norms:
\begin{subequations}
	\label{norms}
	\begin{align}
		\|
    %\underline{\bld{v}}
  (\bld v, \widehat{\bld v})
    \|_{i,h}
		&:=\;
\sum_{K\in\Th^i} 
\left(\|\bld D(\bld v_h)\|_K^2 
+\frac{\alpha k^2}{h}
\|\Pi_h(\mathsf{tang}(\bld v_h-\widehat{\bld v}_h))\|_{\partial K}^2
\right),\\
%		\left(\|\bld D(\bld{v})\|_{\Th^i}^2+\sum_{K\in\Th^i}\frac{\alpha k^2}{h}\|\Pi_h(\mathsf{tang}(\bld{v}-\widehat{\bld{v}}))\|_{\partial K}^2\right)^{1/2},\\
		\|
  (\bld v, \widehat{\bld v})
  %  \underline{\bld{v}}
    \|_{i,*,h} 
		&:=\;
		\left(\|
  (\bld v, \widehat{\bld v})
    \|_{i,h}^2+\sum_{K\in\Th^i}h\|\bld D(\bld{v})\|_{\partial K}^2\right)^{1/2},\\
		|\!|\!|\{\bld{v},
    \bld \xi^s, \widehat{\bld \xi^s}\}
    %\underline{\bld{\xi}^s}\}
    |\!|\!|_h 
		&:=\;
		\left(\|\rho^{1/2}\bld{v}\|^2+2\mu^s\|
      %\underline{\bld{\xi}^s}
  (\bld \xi^s, \widehat{\bld \xi^s})
    \|_{s,h}^2+\lambda^s\|\nabla\cdot\bld{\xi}^s\|^2_s\right)^{1/2},
	\end{align}
\end{subequations}
for $i\in\{f,s\}$ 
and $(\bld v,\widehat{\bld v}, \bld \xi^s, 
\widehat{\bld \xi^s})\in 
\bld V_{h,0}^k\times \widehat{\bld V}_{h,0}^{k-1}
\times\bld V_{h,0}^{k,s}\times \widehat{\bld V}_{h,0}^{k-1,s} 
$, where we denote $\|\cdot\|$ as the $L^2$-norm on $\Omega$, $\|\cdot\|_i$ as the $L^2$-norm on $\Omega^i$.
%It is straightforward to show that $\|\underline{\bld{v}_h}\|_{1,i,*,h}$ and $\|\underline{\bld{v}_h}\|_{1,i,h}$ are equivalent norms by inverse inequality for $\underline{\bld{v}_h}\in\underline{\bld{V_h}}$. Then the coercivity and boundedness of $A_h^i$ following from \cite[Theorem 2]{FuLehrenfeld20} can be expressed as:
The inequality \eqref{coercivity} implies the 
coercivity of the bilinear form $A_h^{i}$  with respect to
the norm $\|\cdot\|_{i,h}$.
We also have the following boundedness of the operator $A_h^{i}$:
\begin{align}
	\label{boundNorm}
	A_h^{i}(
  (\bld v, \widehat{\bld v}),
  %\underline{\bld{v}},
(\bld w_h, \widehat{\bld w}_h)
  %\underline{\bld{w}_h}
  )
	&\lesssim
	\|
  (\bld v, \widehat{\bld v})
  %\underline{\bld{v}}
  \|_{i,*,h}
	\|
  (\bld w_h, \widehat{\bld w}_h)
  %\underline{\bld{w}_h}
  \|_{i,h}, 
\end{align}
for all
  %\underline{\bld {v}}
 $ (\bld v, \widehat {\bld v})\in
 \underline{\bld V}^i+\left(
 \bld V_{h,0}^{k,i}\times \widehat{\bld V}_{h,0}^{k-1, i}\right)$ and 
 $ (\bld w_h, \widehat {\bld w}_h)\in
  \bld V_{h,0}^{k,i}\times \widehat{\bld V}_{h,0}^{k-1, i}$, 
%\end{subequations}
where 
\[
  \underline{\bld V}^i:=\{(\bld v, \bld v|_{\Eh^i}):\;\;
  \bld v|_K\in H^2(K), \;\;\forall K\in\Th^i\}.
\] 

We use the  classical Brezzi-Douglas-Marini (BDM) interpolator
$\Pi_{BDM}$\cite[Proposition 2.3.2]{boffi2013mixed} to project $\bld{u}$ and
$\bld{\eta}^s$ onto the finite element spaces $\bld{V}^k_{h,0}$ and
$\bld{V}^{k,s}_{h,0}$. We denote $\Pi_Q$ as the $L^2$-projection onto the finite
element space $Q_h^{k-1}$. Note that due to the commuting projection property, we have:
\begin{subequations}
	\label{commut}
\begin{alignat}{3}
	(\nabla\cdot\Pi_{BDM}\bld{\eta^s},q_h^s)_s
	&=
	(\nabla\cdot\bld{\eta^s},q_h^s)_s, \quad&&\forall q_h^s\in Q_h^{k-1,s},
	\\
	\label{divFreeProj}
  \nabla\cdot\Pi_{BDM}\bld{u}^f|_{\Omega^f}
	&=
  \Pi_{Q}(\nabla\cdot\bld{u}^f)|_{\Omega^f}=0.
\end{alignat}
\end{subequations}
%Considering the same finite element space of $\nabla\cdot\Pi_{BDM}\bld{u}^f$ and $q_h^f$, \eqref{divFreeProj} actually implies $\nabla\cdot\Pi_{BDM}\bld{u}^f$ is exactly divergence free.

The following standard approximation property of the BDM projector
$\Pi_{BDM}$ and the $L^2$-projector $\Pi_h$ onto 
$\widehat{\bld V}_{h,0}^{k-1}$ is well-known; see \cite[Proposition 2.3.8]{Lehrenfeld10}.
\begin{lemma}%[Approximation error]
	\label{lemma:approx}
	%Let the stabilization parameter $\alpha$ be sufficiently large. 
  Let $\bld u\in [H^1(\Omega)]^d\cap [H^{k+1}(\Th)]^d$.
  Then the following estimates hold:
%	\begin{subequations}
%		\label{approxErr}
		\begin{align}
		\label{approxErr}
    \|(\bld{u}-\Pi_{BDM}\bld u,\bld{u}|_{\Eh^i}-\Pi_h\bld{u})\|_{i,*,h}^2
		&\lesssim
    h^{2k}\sum_{K\in\Th^i}\|\bld{ u}\|_{H^{k+1}(K)}^2,
	%	\\
	%	\label{approxErr-2}
	%	\|(\bld{u}-\Pi_{BDM}\bld{u},\bld{u}-\Pi_h\bld{u})\|_{1,f,h}^2
	%	&\lesssim
	%	h^{2k}\|\bld{u}\|_{H^{k+1}(f)}^2.
		\end{align}
    for $i\in\{f,s\}$.
%	\end{subequations}
\end{lemma}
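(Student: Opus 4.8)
The plan is to establish the bound elementwise and then sum over $K\in\Th^i$, which is legitimate because both the norm $\|\cdot\|_{i,*,h}$ and the right-hand side $\sum_{K\in\Th^i}\|\bld u\|_{H^{k+1}(K)}^2$ decompose as sums over elements. I write $\bld e := \bld u - \Pi_{BDM}\bld u$ for the volumetric interpolation error and $\widehat{\bld e} := \bld u|_{\Eh^i} - \Pi_h\bld u$ for the facet error. Unfolding the definitions in \eqref{norms}, the quantity to control on each $K$ splits into three contributions: the interior strain term $\|\bld D(\bld e)\|_K^2$, the penalty term $\tfrac{\alpha k^2}{h}\|\Pi_h(\mathsf{tang}(\bld e - \widehat{\bld e}))\|_{\partial K}^2$, and the boundary strain term $h\|\bld D(\bld e)\|_{\partial K}^2$. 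The global $H^1(\Omega)$-regularity of $\bld u$ guarantees that $\bld u|_{\Eh^i}$ is single-valued so that these expressions are well defined, while the piecewise $H^{k+1}$-regularity supplies the approximation orders.

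For the interior strain term I would invoke the standard BDM interpolation estimates \cite[Proposition 2.3.8]{Lehrenfeld10}: for $0\le m\le k+1$ one has $|\bld u - \Pi_{BDM}\bld u|_{H^m(K)} \lesssim h^{k+1-m}\|\bld u\|_{H^{k+1}(K)}$. Taking $m=1$ gives $\|\bld D(\bld e)\|_K \le |\bld e|_{H^1(K)} \lesssim h^k\|\bld u\|_{H^{k+1}(K)}$, which is the claimed order after squaring. For the boundary strain term I would apply the scaled trace inequality $\|w\|_{\partial K}^2\lesssim h^{-1}\|w\|_K^2 + h\|\nabla w\|_K^2$ with $w=\bld D(\bld e)$, yielding $h\|\bld D(\bld e)\|_{\partial K}^2 \lesssim |\bld e|_{H^1(K)}^2 + h^2|\bld e|_{H^2(K)}^2$; inserting the cases $m=1$ and $m=2$ of the BDM estimate, both pieces are $\lesssim h^{2k}\|\bld u\|_{H^{k+1}(K)}^2$.

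The penalty term is the step that uses the specific structure of $\widehat{\bld e}$. Since $\Pi_h$ is the facetwise $L^2$-projection onto the tangential space $\widehat{\bld V}_h^{k-1}$, it is idempotent and acts separately on each facet; moreover $\Pi_h \bld u = \Pi_h(\mathsf{tang}(\bld u))$ and $\mathsf{tang}(\bld u|_{\Eh^i}) = \mathsf{tang}(\bld u)$ by $H^1$-conformity. Hence $\Pi_h(\mathsf{tang}(\widehat{\bld e})) = \Pi_h(\mathsf{tang}(\bld u)) - \Pi_h(\Pi_h\bld u) = 0$, so the projected jump collapses to $\Pi_h(\mathsf{tang}(\bld e - \widehat{\bld e})) = \Pi_h(\mathsf{tang}(\bld e))$. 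Using the $L^2$-stability of $\Pi_h$ and the pointwise contractivity of $\mathsf{tang}(\cdot)$ gives $\|\Pi_h(\mathsf{tang}(\bld e - \widehat{\bld e}))\|_{\partial K}\le \|\bld e\|_{\partial K}$, and the trace inequality together with the $m=0,1$ BDM estimates yields $\|\bld e\|_{\partial K}^2 \lesssim h^{-1}\|\bld e\|_K^2 + h|\bld e|_{H^1(K)}^2 \lesssim h^{2k+1}\|\bld u\|_{H^{k+1}(K)}^2$. Dividing by $h$ recovers the target order $h^{2k}\|\bld u\|_{H^{k+1}(K)}^2$.

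Summing the three elementwise bounds over $K\in\Th^i$ completes the proof. I expect the only genuinely delicate point to be the penalty term: one must recognize that the facet unknown $\widehat{\bld e}$ is constructed precisely so that $\Pi_h(\mathsf{tang}(\widehat{\bld e}))$ vanishes, which reduces the projected jump to the tangential trace of the purely volumetric error $\bld e$; after that, the estimate is a routine combination of the scaled trace inequality with the standard BDM orders. The other place demanding care is the bookkeeping of the $h$-powers, so that the explicit $h^{-1}$ and $h$ weights built into $\|\cdot\|_{i,*,h}$ cancel exactly against the trace-inequality factors to leave the uniform order $h^{2k}$; this is mechanical once the BDM seminorm estimates are in hand.
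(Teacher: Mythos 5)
Your proof is correct. The paper itself does not prove this lemma; it simply declares the estimate ``well-known'' and cites \cite[Proposition 2.3.8]{Lehrenfeld10}, so there is no in-paper argument to compare against. Your derivation is the standard one that the cited reference relies on: elementwise BDM seminorm estimates for the volume and boundary strain terms via the scaled trace inequality, and the observation that the facet error contributes nothing to the projected-jump penalty. That last step is the only non-mechanical point and you handle it correctly: since every $\widehat{\bld w}\in\widehat{\bld V}_h^{k-1}$ is tangential, $\Pi_h\bld u=\Pi_h(\mathsf{tang}(\bld u))$ and $\Pi_h$ is idempotent, so $\Pi_h(\mathsf{tang}(\bld u|_{\Eh^i}-\Pi_h\bld u))=0$ and the penalty term reduces to $\Pi_h(\mathsf{tang}(\bld u-\Pi_{BDM}\bld u))$, which the $L^2$-stability of $\Pi_h$ and the trace inequality control at order $h^{2k+1}$ before the $h^{-1}$ weight is applied. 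The power bookkeeping checks out in all three terms, and the requirement $k\ge 1$ guarantees the $H^2(K)$ regularity needed to apply the trace inequality to $\bld D(\bld u-\Pi_{BDM}\bld u)$.
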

%\begin{proof}
%	The approximation error estimates follow from usual Bramble-Hilbert-type arguments, cf. \cite[Propostion 2.3.8]{Lehrenfeld10}.
%\end{proof}

%\subsection{Semi-discrete scheme error estimates}
To further simplify notation, we denote:
\begin{alignat*}{3}
		&\underline{\bld{\delta}_{\bld{u}}}
		&&:=
		(\bld{\delta}_{\bld{u}},\bld{\delta}_{\widehat{\bld{u}}})
		&&:=
		(\bld{u}-\Pi_{BDM}\bld{u},\bld{u}|_{\Eh}-\Pi_h\bld{u}),
		\\
		&\underline{\bld{\delta}_{\bld{\eta}^s}}
		&&:=
		(\bld{\delta}_{\bld{\eta}^s},\bld{\delta}_{\widehat{\bld{\eta}}^s})
		&&:=
		(\bld{\eta}^s-\Pi_{BDM}\bld{\eta}^s,\bld{\eta}^s|_{\Eh^s}-\Pi_h\bld{\eta}^s),
		\\
		&\delta_{p^f}&&:=p^f-\Pi_Q p^f&&,
		\\
		&\underline{\bld{\varepsilon}_{\bld{u}}}
		&&:=
		(\bld{\varepsilon}_{\bld{u}},\bld{\varepsilon}_{\widehat{\bld{u}}})
		&&:=
		(\bld{u}_h-\Pi_{BDM}\bld{u},\widehat{\bld{u}}_h-\Pi_h\bld{u}),
		\\
		&\underline{\bld{\varepsilon}_{\bld{\eta}^s}}
		&&:=
		(\bld{\varepsilon}_{\bld{\eta}^s},\bld{\varepsilon}_{\widehat{\bld{\eta}}^s})
		&&:=
		(\bld{\eta}^s_h-\Pi_{BDM}\bld{\eta}^s,\widehat{\bld{\eta}}^s_h-\Pi_h\bld{\eta}^s),
		\\
		&\varepsilon_{p^f}&&:=p^f_h-\Pi_Q p^f&&.
\end{alignat*}
where 
%$({\bld{u}_h}, \widehat{\bld u}_h, p^f_h,
%{\bld{\eta}^s_h}, 
%\widehat{\bld \eta}^s_h
%)$ is the solution to semi-discrete scheme \eqref{semi}.
$(\underline{\bld{u}_h},p^f_h,\underline{\bld{\eta}^s_h})
\in 
	\underline{\bld{V}_{h,0}^k}\times Q_{h,0}^{k-1,f}\times\underline{\bld{V}_{h,0}^{k,s}}$
 is the solution to the semi-discrete scheme \eqref{semi}, with 
the componound spaces denoted as
\[
  \underline{\bld{V}_{h,0}^k}:=\bld {V}_{h,0}^k\times \widehat{\bld
  V}_{h,0}^{k-1},\;\;
  \underline{\bld{V}_{h,0}^{k,s}}:=\bld {V}_{h,0}^{k,s}\times \widehat{\bld
  V}_{h,0}^{k-1,s}.
\]
\begin{lemma}[Error equations of the semi-discrete scheme \eqref{semi}]
%  For the semi-discrete scheme \eqref{semi}, we get the following error equations:
We have
the following error equations for the semi-discrete scheme \eqref{semi} :
	\begin{subequations}
		\label{semiErrEqu}
		\begin{align}
			\label{semiErrEqu-1}
			(\rho\partial_t\bld{\varepsilon}_{\bld{u}},\bld{v}_h)
			+2\mu^f A_h^f\left(\underline{\bld{\varepsilon}_{\bld{u}}},\underline{\bld{v}_h}\right)
			&-(\bld{\varepsilon}_{p^f},\nabla\cdot\bld{v}_h)_f
			+2\mu^s A_h^s\left(\underline{\bld{\varepsilon}_{\bld{\eta}^s}},\underline{\bld{v}_h}\right)
			+\lambda^s(\nabla\cdot\bld{\varepsilon}_{\bld{\eta}^s},\nabla\cdot\bld{v}_h)_s
			\\
			&=\;(\rho\partial_t\bld{\delta}_{\bld{u}},\bld{v}_h)
			+2\mu^f A_h^f\left(\underline{\bld{\delta}_{\bld{u}}},\underline{\bld{v}_h}\right)
			+2\mu^s A_h^s\left(\underline{\bld{\delta}_{\bld{\eta}^s}},\underline{\bld{v}_h}\right),\nonumber
			\\
			\label{semiErrEqu-2}
			(\partial_t\bld{\varepsilon}_{\bld{\eta}^s},\bld{\xi}_h^s)_s
			&=\;(\bld{\varepsilon}_{\bld{u}^s},\bld{\xi}_h^s)_s,\\
			\label{semiErrEqu-3}
			\langle\partial_t\bld{\varepsilon}_{\widehat{\bld{\eta}}^s},\widehat{\bld{\xi}}_h^s\rangle_s
			&=\;\langle\bld{\varepsilon}_{\widehat{\bld{u}}^s},\widehat{\bld{\xi}}_h^s\rangle_s.
		\end{align}
	\end{subequations}
	for all $(\underline{\bld{v}_h},q_h^f,\underline{\bld{\xi}_h^s})\in\underline{\bld{V}_{h,0}^k}\times Q_{h,0}^{k-1,f}\times\underline{\bld{V}_{h,0}^{k,s}}$.
\end{lemma}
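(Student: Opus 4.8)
The plan is to obtain the error equations \eqref{semiErrEqu} by subtracting the Galerkin-orthogonality identities \eqref{semiX}, satisfied by the exact solution, from the discrete equations \eqref{semi}, satisfied by the numerical solution, and then to split each resulting difference into its $\bld{\varepsilon}$- and $\bld{\delta}$-parts via $\underline{\bld{u}_h}-(\bld{u},\bld{u}|_{\Eh})=\underline{\bld{\varepsilon}_{\bld{u}}}-\underline{\bld{\delta}_{\bld{u}}}$, $\underline{\bld{\eta}^s_h}-(\bld{\eta}^s,\bld{\eta}^s|_{\Eh^s})=\underline{\bld{\varepsilon}_{\bld{\eta}^s}}-\underline{\bld{\delta}_{\bld{\eta}^s}}$, and $p^f_h-p^f=\varepsilon_{p^f}-\delta_{p^f}$. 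Using the bilinearity of $A_h^f$, $A_h^s$ and of the pressure and volumetric forms, I would move every $\bld{\delta}$-contribution to the right-hand side; the three stated identities then follow once the interpolation-error terms that are \emph{absent} from the claimed right-hand sides are shown to vanish by the projector properties.

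First I would dispatch the kinematic identities \eqref{semiErrEqu-2} and \eqref{semiErrEqu-3}. Subtracting \eqref{semiX-2} from \eqref{semi-2} and splitting, the claim reduces to showing $\bld{\delta}_{\bld{u}^s}-\partial_t\bld{\delta}_{\bld{\eta}^s}=0$ on $\Omega^s$; similarly \eqref{semiErrEqu-3} reduces to $\bld{\delta}_{\widehat{\bld{u}}^s}-\partial_t\bld{\delta}_{\widehat{\bld{\eta}}^s}=0$ on $\Eh^s$. Both hold \emph{identically}: the spatial projectors $\Pi_{BDM}$ and $\Pi_h$ are time-independent and hence commute with $\partial_t$, and together with the constitutive relation $\partial_t\bld{\eta}^s=\bld{u}^s$ from \eqref{s-eq} this gives $\partial_t\bld{\delta}_{\bld{\eta}^s}=\partial_t\bld{\eta}^s-\Pi_{BDM}\partial_t\bld{\eta}^s=\bld{u}^s-\Pi_{BDM}\bld{u}^s=\bld{\delta}_{\bld{u}^s}$, and analogously on the mesh skeleton. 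Hence these $\bld{\delta}$-terms cancel and never reach the right-hand side.

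The core of the argument is \eqref{semiErrEqu-1}. After the subtraction and splitting, three interpolation-error terms must be disposed of: (i) the fluid divergence-constraint relation tested against $q_h^f$; (ii) the fluid pressure-consistency term $(\delta_{p^f},\nabla\cdot\bld{v}_h)_f$; and (iii) the structure volumetric term $\lambda^s(\nabla\cdot\bld{\delta}_{\bld{\eta}^s},\nabla\cdot\bld{v}_h)_s$. For (i), since $\bld{u}^f$ is divergence free, $\nabla\cdot\Pi_{BDM}\bld{u}^f=\Pi_Q(\nabla\cdot\bld{u}^f)=0$ by \eqref{divFreeProj}, so $\nabla\cdot\bld{\delta}_{\bld{u}}|_{\Th^f}=0$; moreover $\nabla\cdot\bld{u}_h|_{\Th^f}=0$ by the exact divergence-free property \eqref{div1}, whence $\nabla\cdot\bld{\varepsilon}_{\bld{u}}|_{\Th^f}=0$, and the whole $q_h^f$-tested relation collapses to $0=0$. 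For (ii), $\nabla\cdot\bld{v}_h|_{\Th^f}\in Q_h^{k-1,f}$ while $\delta_{p^f}=p^f-\Pi_Q p^f$ is $L^2(\Omega^f)$-orthogonal to $Q_h^{k-1,f}$, so (ii) vanishes. For (iii), the commuting identity \eqref{commut} gives $(\nabla\cdot\bld{\delta}_{\bld{\eta}^s},q_h^s)_s=0$ for every $q_h^s\in Q_h^{k-1,s}$, and the admissible choice $q_h^s=\nabla\cdot\bld{v}_h|_{\Th^s}$ kills (iii). Collecting the surviving left-hand terms together with the retained right-hand $\bld{\delta}$-forms yields \eqref{semiErrEqu-1}.

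I do not expect a genuine analytical obstacle: once \eqref{semiX} is available the derivation is purely algebraic, requiring only the commuting-projection relations \eqref{commut}, the $L^2$-orthogonality of $\Pi_Q$, and the exact divergence-free property \eqref{div1}. The only point demanding care is the bookkeeping—matching the interpolation-error terms annihilated by the projector identities against those deliberately omitted from the right-hand sides, while ensuring that the genuinely nonzero $\bld{\delta}$-forms $(\rho\partial_t\bld{\delta}_{\bld{u}},\bld{v}_h)$, $A_h^f(\underline{\bld{\delta}_{\bld{u}}},\underline{\bld{v}_h})$ and $A_h^s(\underline{\bld{\delta}_{\bld{\eta}^s}},\underline{\bld{v}_h})$ are retained on the right.
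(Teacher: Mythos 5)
Your proposal is correct and follows essentially the same route as the paper: subtract the Galerkin-orthogonality identities \eqref{semiX} from the discrete equations \eqref{semi}, split via the $\bld\varepsilon/\bld\delta$ decomposition, and eliminate the interpolation-error terms using the commuting property \eqref{commut}, the $L^2$-orthogonality of $\Pi_Q$, the divergence-free properties \eqref{div1} and \eqref{divFreeProj}, and the time-independence of the projectors for the kinematic equations. Your write-up is in fact more explicit than the paper's one-line argument about why the $q_h^f$-tested term, the $(\delta_{p^f},\nabla\cdot\bld v_h)_f$ term, and the $\lambda^s$-weighted $\bld\delta$-term all vanish.
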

\begin{proof}
	By subtracting the semi-discrete scheme \eqref{semiX-1} from the consistency result \eqref{semi-1}, then adding and subtracting the above projectors, we can get the error equation \eqref{semiErrEqu-1}, where the commutative property of BDM interpolation \eqref{commut} is used. Then \eqref{semiErrEqu-2} and \eqref{semiErrEqu-3} can be easily derived since we have $\partial_t\Pi_{BDM}\bld{u}^s=\partial_t\Pi_{BDM}\bld{\eta}^s$, $\partial_t\Pi_h\bld{u}^s=\partial_t\Pi_h\bld{\eta}^s$. 
\end{proof}

Note that due to the same finite elements space of velocity and displacement
approximation in $\Omega^s$, the error equations \eqref{semiErrEqu-2} and
\eqref{semiErrEqu-3} actually imply that
$\bld{\varepsilon}_{\bld{u}^s}=\partial_t\bld{\varepsilon}_{\bld{\eta}^s},
\bld{\varepsilon}_{\widehat{\bld{u}}^s}=\partial_t\bld{\varepsilon}_{\widehat{\bld{\eta}}^s}$.
Now we are ready to present the main result in this section.
\begin{theorem}
	\label{theorem:semiErr}
	%Let the assumptions in Lemma \ref{lemma:approx} hold. 
  Let $(\underline{\bld{u}_h},p_h^f,\underline{\bld{\eta}_h^s})$ be the solution
  to semi-discrete scheme \eqref{semi} with initial data such that
  $\left(\underline{\bld{\varepsilon}_{\bld{u}}}(0),\varepsilon_{p^f}(0),\underline{\bld{\varepsilon}_{\bld{\eta}^s}}(0)\right)=\left(\underline{\bld{0}},0,\underline{\bld{0}}\right)$.
  Assume the solution $(\bld u, \bld \eta^s)$ to the model problem \eqref{model}
  is smooth.
  Then the following estimation holds for all $T>0$:
	\begin{equation}
		\label{semiErr}
    |\!|\!| 
    \{\bld{\varepsilon}_{\bld{u}}(T),\underline{\bld{\varepsilon}_{\bld{\eta}^s}}(T)\}
    |\!|\!|_h^2
		+
		\mu^f\int_{0}^{T}
		\|\underline{\bld{\varepsilon}_{\bld{u}}}\|_{f,h}^2\mathrm{dt}
		\lesssim\; 
		h^{2k}\left(\lXi_1+\lXi_2+\lXi_3\right),
	\end{equation}
	where
	\begin{align*}
		\lXi_1
		&:=
    T\int_{0}^{T}\left(\|\rho^{1/2}\partial_t\bld{u}\|_{H^{k}(\Omega)}^2
		+
		\mu^s
		\|\partial_t\bld{\eta}^s\|_{H^{k+1}(\Omega^s)}
		\right)\mathrm{dt},
		\\
		\lXi_2
		&:=
		\mu^f\int_{0}^{T}
		\|\bld{u}\|_{H^{k+1}(\Omega^f)}^2\mathrm{dt},
		\\
		\lXi_3
		&:=
		\mu^s
		\|\bld{\eta}^s\|_{L^{\infty}\left(H^{k+1}(\Omega^s)\right)}.
	\end{align*}
\end{theorem}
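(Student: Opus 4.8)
The plan is to run a discrete energy argument on the error equations \eqref{semiErrEqu}, testing with the errors themselves, and then to close the estimate through a supremum-in-time bound rather than a Gr\"onwall inequality; the latter choice is what produces the explicit factor $T$ in $\lXi_1$ instead of an exponential constant.

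First I would take $(\bld{v}_h,\widehat{\bld{v}}_h)=\underline{\bld{\varepsilon}_{\bld{u}}}$ in \eqref{semiErrEqu-1}. Two features collapse the left-hand side into an energy. The pressure term $(\varepsilon_{p^f},\nabla\cdot\bld{\varepsilon}_{\bld{u}})_f$ vanishes, because $\nabla\cdot\bld{u}_h|_{\Th^f}=0$ together with the commuting property \eqref{divFreeProj} gives $\nabla\cdot\bld{\varepsilon}_{\bld{u}}|_{\Th^f}=0$ (this is the pressure-robustness of the scheme, and the reason $\delta_{p^f}$ never enters). Second, on $\Omega^s$ the identities $\bld{\varepsilon}_{\bld{u}^s}=\partial_t\bld{\varepsilon}_{\bld{\eta}^s}$ and $\bld{\varepsilon}_{\widehat{\bld{u}}^s}=\partial_t\bld{\varepsilon}_{\widehat{\bld{\eta}}^s}$ let me rewrite the elastic and dilatation contributions as exact time derivatives; using the symmetry of $A_h^s$, the left-hand side becomes $\tfrac12\tfrac{d}{dt}G(t)+2\mu^f A_h^f(\underline{\bld{\varepsilon}_{\bld{u}}},\underline{\bld{\varepsilon}_{\bld{u}}})$, where $G(t):=\|\rho^{1/2}\bld{\varepsilon}_{\bld{u}}\|^2+2\mu^s A_h^s(\underline{\bld{\varepsilon}_{\bld{\eta}^s}},\underline{\bld{\varepsilon}_{\bld{\eta}^s}})+\lambda^s\|\nabla\cdot\bld{\varepsilon}_{\bld{\eta}^s}\|_s^2$. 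By coercivity \eqref{coercivity} one has $|\!|\!|\{\bld{\varepsilon}_{\bld{u}},\underline{\bld{\varepsilon}_{\bld{\eta}^s}}\}|\!|\!|_h^2\lesssim G(t)$. Integrating in time and using the vanishing initial errors yields $G(t)+2\mu^f\int_0^t A_h^f(\underline{\bld{\varepsilon}_{\bld{u}}},\underline{\bld{\varepsilon}_{\bld{u}}})\,\mathrm{ds}=\int_0^t\mathrm{RHS}$, the right-hand side being the three projection-error terms of \eqref{semiErrEqu-1}.

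I would then estimate the three terms. The fluid diffusion term $2\mu^f A_h^f(\underline{\bld{\delta}_{\bld{u}}},\underline{\bld{\varepsilon}_{\bld{u}}})$ is controlled by boundedness \eqref{boundNorm} and Young's inequality, absorbing $\mu^f\int_0^T\|\underline{\bld{\varepsilon}_{\bld{u}}}\|_{f,h}^2$ into the coercive dissipation and leaving $h^{2k}\lXi_2$ through \eqref{approxErr}. The delicate term is the elastic coupling $2\mu^s A_h^s(\underline{\bld{\delta}_{\bld{\eta}^s}},\underline{\bld{\varepsilon}_{\bld{u}}})$: because there is no viscous dissipation on $\Omega^s$, I cannot bound $\|\underline{\bld{\varepsilon}_{\bld{u}}}\|_{s,h}$ directly. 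The remedy is to replace $\underline{\bld{\varepsilon}_{\bld{u}}}$ on $\Omega^s$ by $\partial_t\underline{\bld{\varepsilon}_{\bld{\eta}^s}}$ and integrate by parts in time, producing a pointwise term $2\mu^s A_h^s(\underline{\bld{\delta}_{\bld{\eta}^s}}(t),\underline{\bld{\varepsilon}_{\bld{\eta}^s}}(t))$ (the $t=0$ term drops by the initial assumption) and a volume term $-\int_0^t 2\mu^s A_h^s(\partial_t\underline{\bld{\delta}_{\bld{\eta}^s}},\underline{\bld{\varepsilon}_{\bld{\eta}^s}})$, both now featuring $\underline{\bld{\varepsilon}_{\bld{\eta}^s}}$, which $G$ controls.

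To close without Gr\"onwall I would set $M:=\sup_{[0,T]}\bigl[G(t)+2\mu^f\int_0^t A_h^f(\underline{\bld{\varepsilon}_{\bld{u}}},\underline{\bld{\varepsilon}_{\bld{u}}})\,\mathrm{ds}\bigr]$ and bound each right-hand contribution by a small multiple of $M$ plus data. Every term carrying a $\partial_t$ on a projection error, namely the inertia term $(\rho\partial_t\bld{\delta}_{\bld{u}},\bld{\varepsilon}_{\bld{u}})$ and the volume term above, is estimated by pulling out $\sup_{[0,T]}\|\rho^{1/2}\bld{\varepsilon}_{\bld{u}}\|\le\sqrt{2M}$, respectively $\sup_{[0,T]}\sqrt{\mu^s}\|\underline{\bld{\varepsilon}_{\bld{\eta}^s}}\|_{s,h}\le\sqrt{2M}$, and applying Cauchy-Schwarz in time to the remaining projection-error factor; this is exactly where the factor $T$ and the time integrals of $\partial_t\bld{u}$ and $\partial_t\bld{\eta}^s$ forming $\lXi_1$ arise, after \eqref{approxErr}. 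The remaining pointwise term is bounded by $\sqrt{2M}$ times $\sqrt{\mu^s}\|\underline{\bld{\delta}_{\bld{\eta}^s}}(t)\|_{s,*,h}$, which gives $h^{2k}\lXi_3$. Summing produces $M\le\tfrac12 M+Ch^{2k}(\lXi_1+\lXi_2+\lXi_3)$, hence $M\lesssim h^{2k}(\lXi_1+\lXi_2+\lXi_3)$; since both quantities on the left of \eqref{semiErr} are dominated by a fixed multiple of $M$, the estimate follows. The main obstacle is precisely this elastic coupling term: the lack of structural dissipation forces both the integration-by-parts-in-time and the supremum closure, and one must keep careful track of the parameter weights $\rho,\mu^f,\mu^s,\lambda^s$ so that they enter only through the combinations present in $|\!|\!|\cdot|\!|\!|_h$ and in $\lXi_1$--$\lXi_3$.
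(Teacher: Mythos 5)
Your proposal is correct and follows essentially the same route as the paper: test the error equations with the errors themselves, use the divergence-free and commuting-projection properties to eliminate the pressure term, move the time derivative off $\underline{\bld{\varepsilon}_{\bld{\eta}^s}}$ in the elastic coupling term via the product rule in time, and close with a bound that is linear rather than exponential in $T$. The only difference is packaging of the final step: the paper invokes the Gronwall-type inequality of \cite[Proposition 3.1]{chabaud2012uniform} followed by Cauchy--Schwarz in time, whereas you re-derive the same conclusion by an explicit supremum-in-time absorption argument; the two are interchangeable and produce the same factor $T$ in $\lXi_1$.
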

\begin{proof}
	Here we use the standard energy argument. Take $(\underline{\bld{v}_h},q_h^f)=(\underline{\bld{\varepsilon}_{\bld{u}}},\varepsilon_{p^f})$ in error equation \eqref{semiErrEqu-1} and plug in $\bld{\varepsilon}_{\bld{u}^s}=\partial_t\bld{\varepsilon}_{\bld{\eta}^s}, \bld{\varepsilon}_{\widehat{\bld{u}}^s}=\partial_t\bld{\varepsilon}_{\widehat{\bld{\eta}}^s}$, we get:
	\begin{align*}
		\frac{1}{2}\frac{\partial}{\partial_t}&\underbrace{\left(\|\rho^{1/2}\bld{\varepsilon}_{\bld{u}}\|^2
		+
		2\mu^s A_h^s\left(\underline{\bld{\varepsilon}_{\bld{\eta}^s}},\underline{\bld{\varepsilon}_{\bld{\eta}^s}}\right)
		+
		\lambda^s\|\nabla\cdot\bld{\varepsilon}_{\bld{\eta}^s}\|_s^2\right)}_{:=\mathcal{H}(t)}
		+2\mu^f
		A_h^f\left(\underline{\bld{\varepsilon}_{\bld{u}}},\underline{\bld{\varepsilon}_{\bld{u}}}\right)
		\\
		&=\;
		(\rho\partial_t\bld{\delta}_{\bld{u}},\bld{\varepsilon}_{\bld{u}})
		+
		2\mu^f A_h^f\left(\underline{\bld{\delta}_{\bld{u}}},\underline{\bld{\varepsilon}_{\bld{u}}}\right)
		+
		2\mu^s
		A_h^s\left(\underline{\bld{\delta}_{\bld{\eta}^s}},\partial_t\underline{\bld{\varepsilon}_{\bld{\eta}^s}}\right),\nonumber
	\end{align*}
	where we used the exactly divergence-free property of $\bld{u}^f$ and
  $\Pi_{BDM}\bld{u}^f$. By plugging in the right-hand side the chain rule for
  the time derivative
	\begin{align*}
		\frac{d}{d t}
		A_h^s\left(\underline{\bld{\delta}_{\bld{\eta}^s}},\underline{\bld{\varepsilon}_{\bld{\eta}^s}}\right)
		=
		A_h^s\left(\partial_t\underline{\bld{\delta}_{\bld{\eta}^s}},\underline{\bld{\varepsilon}_{\bld{\eta}^s}}\right)
		+
		A_h^s\left(\underline{\bld{\delta}_{\bld{\eta}^s}},\partial_t\underline{\bld{\varepsilon}_{\bld{\eta}^s}}\right),
	\end{align*}
	and then applying the Cauchy-Schwarz inequality and boundedness of $A_h^i$ \eqref{boundNorm}, we get:
	\begin{align*}
		\frac{1}{2}\frac{\partial}{\partial_t}\mathcal{H}(t)
		+
		2\mu^f
		A_h^f\left(\underline{\bld{\varepsilon}_{\bld{u}}},\underline{\bld{\varepsilon}_{\bld{u}}}\right)
		\lesssim&
		\|\rho^{1/2}\partial_t\bld{\delta}_{\bld{u}}\|\,
		\|\rho^{1/2}\bld{\varepsilon}_{\bld{u}}\|
		+
		2\mu^f\|\underline{\bld{\delta}_{\bld{u}}}\|_{f,*,h}
		\|\underline{\bld{\varepsilon}_{\bld{u}}}\|_{f,h}
		\\
		&+
		2\mu^s
		\|\partial_t\underline{\bld{\delta}_{\bld{\eta}^s}}\|_{s,*,h}
		\|\underline{\bld{\varepsilon}_{\bld{\eta}^s}}\|_{s,h}
		+
		2\mu^s
		\frac{d}{dt}
		A_h^s\left(\underline{\bld{\delta}_{\bld{\eta}^s}},\underline{\bld{\varepsilon}_{\bld{\eta}^s}}\right)
		\\
		\lesssim&
		\Theta^{1/2}|\!|\!|\{\bld{\varepsilon}_{\bld{u}},\underline{\bld{\varepsilon}_{\bld{\eta}^s}}\}|\!|\!|_h
		+
		2\mu^f
		\|\underline{\bld{\delta}_{\bld{u}}}\|_{f,*,h}
		\|\underline{\bld{\varepsilon}_{\bld{u}}}\|_{f,h}
		+
		2\mu^s
		\frac{\partial}{\partial_t}
		A_h^s\left(\underline{\bld{\delta}_{\bld{\eta}^s}},\underline{\bld{\varepsilon}_{\bld{\eta}^s}}\right),
	\end{align*}
	where $\Theta:=\left(\|\rho^{1/2}\partial_t\bld{\delta}_{\bld{u}}\|^2+	2\mu^s
	\|\partial_t\underline{\bld{\delta}_{\bld{\eta}^s}}\|_{s,*,h}^2\right)$. Integrate both sides over time from $t=0$ to $t=T$, combined with $\left(\underline{\bld{\varepsilon}_{\bld{u}}}(0),\varepsilon_{p^f}(0),\underline{\bld{\varepsilon}_{\bld{\eta}^s}}(0)\right)=\left(\underline{\bld{0}},0,\underline{\bld{0}}\right)$, we get:
	\begin{align*}
		\mathcal{H}(T)
		+
		\mu^f
		\int_{0}^{T}A_h^f\left(\underline{\bld{\varepsilon}_{\bld{u}}},\underline{\bld{\varepsilon}_{\bld{u}}}\right)\mathrm{dt}
		\lesssim&
		\int_{0}^{T}
		\Theta^{1/2}|\!|\!|\{\bld{\varepsilon}_{\bld{u}},\underline{\bld{\varepsilon}_{\bld{\eta}^s}}\}|\!|\!|_h\mathrm{dt}
		+
		\mu^f\int_{0}^{T}
		\|\underline{\bld{\delta}_{\bld{u}}}\|_{f,*,h}
		\|\underline{\bld{\varepsilon}_{\bld{u}}}\|_{f,h}\mathrm{dt}
		\\
		&+
		\mu^s
		A_h^s\left(\underline{\bld{\delta}_{\bld{\eta}^s}}(T),\underline{\bld{\varepsilon}_{\bld{\eta}^s}}(T)\right).
	\end{align*}
  Applying the coercivity and boundedness of $A_h^i$, and Young's inequality, we get:
	\begin{align*}
		|\!|\!|\{\bld{\varepsilon}_{\bld{u}}(T),\underline{\bld{\varepsilon}_{\bld{\eta}^s}}(T)\}|\!|\!|_h^2
		+
		\mu^f\int_{0}^{T}
		\|\underline{\bld{\varepsilon}_{\bld{u}}}\|_{f,h}^2\mathrm{dt}
		\lesssim&
		\int_{0}^{T}
		\Theta^{1/2}|\!|\!|\{\bld{\varepsilon}_{\bld{u}},\underline{\bld{\varepsilon}_{\bld{\eta}^s}}\}|\!|\!|_h\mathrm{dt}
		\\
		&+
		\mu^f\gamma_1\int_{0}^{T}
		\|\underline{\bld{\delta}_{\bld{u}}}\|_{f,*,h}^2\mathrm{dt}
		+
		\mu^s\gamma_2
		\|\underline{\bld{\delta}_{\bld{\eta}^s}}(T)\|_{s,*,h}^2
		\\
		&+
		\frac{\mu^f}{\gamma_1}\int_{0}^{T}
		\|\underline{\bld{\varepsilon}_{\bld{u}}}\|_{f,h}^2\mathrm{dt}
		+
		\frac{\mu^s}{\gamma_2}
		\|\underline{\bld{\varepsilon}_{\bld{\eta}^s}}(T)\|_{s,h}^2,
	\end{align*}
	for all $\gamma_1,\gamma_2 > 0$. The last two terms would be absorbed by the left-hand side when $\gamma_1$ and $\gamma_2$ are big enough. Then we have:
	\begin{align*}
		|\!|\!|\{\bld{\varepsilon}_{\bld{u}}(T),\underline{\bld{\varepsilon}_{\bld{\eta}^s}}(T)\}|\!|\!|_h^2
		+
		\mu^f\int_{0}^{T}
		\|\underline{\bld{\varepsilon}_{\bld{u}}}\|_{f,h}^2\mathrm{dt}
		\lesssim&
		\int_{0}^{T}
		\Theta^{1/2}|\|\{\bld{\varepsilon}_{\bld{u}},\underline{\bld{\varepsilon}_{\bld{\eta}^s}}\}\||_h\mathrm{dt}
		\\
		&+
		\mu^f\int_{0}^{T}
		\|\underline{\bld{\delta}_{\bld{u}}}\|_{f,*,h}^2\mathrm{dt}
		+
		\mu^s
		\|\underline{\bld{\delta}_{\bld{\eta}^s}}(T)\|_{s,*,h}^2.
	\end{align*}
    By applying the Gronwall-type inequality \cite[Propostion 3.1]{chabaud2012uniform} and the Cauchy-Schwarz inequality, we get:
    \begin{align*}
    	|\!|\!|\{\bld{\varepsilon}_{\bld{u}}(T),\underline{\bld{\varepsilon}_{\bld{\eta}^s}}(T)\}|\!|\!|_h^2
    	&+
    	\mu^f\int_{0}^{T}
    	\|\underline{\bld{\varepsilon}_{\bld{u}}}\|_{f,h}^2\mathrm{dt}
    	\\
    	&\le
    	\left(\frac{1}{2}\int_{0}^{T}\Theta^{1/2}\mathrm{dt}
    	+
    	\left(\max_{0\le\mathrm{t}\le T}
    	\left(
    	\mu^f\int_{0}^{t}
    	\|\underline{\bld{\delta}_{\bld{u}}}\|_{f,*,h}^2\mathrm{dt}
    	+
    	\mu^s
    	\|\underline{\bld{\delta}_{\bld{\eta}^s}}\|_{s,*,h}^2
    	\right)
    	\right)^{1/2}
    	\right)^2
    	\\
    	&\lesssim
    	T\int_{0}^{T}\Theta\mathrm{dt}
    	+
    	\mu^f\int_{0}^{T}
    	\|\underline{\bld{\delta}_{\bld{u}}}\|_{f,*,h}^2\mathrm{dt}
    	+
    	\mu^s
    	\max_{0\le\mathrm{t}\le T}
    	\|\underline{\bld{\delta}_{\bld{\eta}^s}}\|_{s,*,h}^2.
    \end{align*}
    Finally, the estimate \eqref{semiErr} is obtained by the above inequality
    and the approximation properties of the projectors in Lemma
    \ref{lemma:approx}.
    %	Applying Poincar\'e-type inequality for the broken Sobolev spaces \cite[Corollary 2.2]{lasis2003poincare} for the $L^2$ norm of velocity on the whole domain, and combining the above inequality with approximation estimate in \eqref{approxErr}, we get the desired error approximation in Theorem \ref{theorem:semiErr}.
\end{proof}

\begin{remark}[Robust velocity/displacement estimates]
  It is clear that the velocity and displacement error estimate  \eqref{semiErr}  is independent of the pressure approximation $p_h^f$ and 
  the lame parameter $\lambda^s$.
  Moreover, the error estimate \eqref{semiErr} is optimal in the energy norm 
  $|\!|\!|\cdot|\!|\!|_h$, which contains a discrete $H^1$-norm on $\Omega^s$.
On the other hand,  we can only obtain a suboptimal convergence of order $\mathcal{O}(h^k)$ for
  the $L^2$-norm of the velocity approximation from \eqref{semiErr}.
However,  our numerical results in the next section indicate that the velocity $L^2$-norm
  seems to be optimal. The proof of the optimality of the velocity $L^2$-norm is our
  future work.
%  By the error estimation of the semi-discrete scheme, we can show that our monolithic divergence-conforming HDG scheme for the linear FSI problem is robust with respect to the material parameters. Although the achieved convergence rate in a priori error analysis via energy arguments is sub-optimal, our numerical results in the next section indicate the optimal convergence rates of our scheme. 
\end{remark}
%%%%%%%%%%%%%%%%%%%%%%%%%%%%%%%%%%%%%%%%%%%%
\section{Numerical results}
\label{sec:num}
In this section, we 
present {three} numerical examples for the 
model problem \eqref{model} in two k{and three} dimensions.
%using the monolithic divergence-conforming HDG
%scheme \eqref{full}.
The first example 
uses a manufactured solution to verify the accuracy of the proposed 
 monolithic divergence-conforming HDG
 schemes \eqref{full} and \eqref{fullB}
and the robustness of the preconditioner \eqref{prec} 
%with 
%\eqref{s-pre} and \eqref{a-pre} 
with respect to mesh size, time step
size, and material parameters.
The second example is a classical benchmark problem typically used to
validate FSI solvers \cite{Nobile01,Bukac14x}. 
{The third example is a 3D test case simulating the propagation of pressure pulse through a  straight cylinder pipe.}
%Our numerical simulations are performed using the open-source finite-element software 
%{\sf NGSolve} \cite{Schoberl16}, \url{https://ngsolve.org/}.
The NGSolve software \cite{Schoberl16} is used for the simulations.

\subsection{Example 1: The method of manufactured solutions}
We consider  a rectangular fluid
domain, $\Omega^f = (0, 1)\times (-1, 0)$, and a rectangular solid domain,
$\Omega^s = (0, 1) \times (0, {0.5})$, connected by an
interface, $\Gamma = \{(x, y): \; x \in (0, 1), y = 0\}$.
We choose the volume and interface source terms such that the exact solutions
are given as follows:
\begin{align*}
\bld u^f=\bld u^s =&\;\left( \sin(2\pi x)^2\sin(\frac{8}{3}\pi(y+1))\sin(2t),
-1.5\sin(4\pi x)\sin(\frac43\pi (y+1))^2\sin(2t)\right),\\
  p^f=&\; 
\sin(2\pi x)\sin(2\pi y)\sin(t),\\
\bld \eta^s =&\;\left( \sin(2\pi x)^2\sin(\frac{8}{3}\pi(y+1))\sin(t)^2,
-1.5\sin(4\pi x)\sin(\frac43\pi (y+1))^2\sin(t)^2\right).
\end{align*}
We use a homogeneous Dirichlet boundary conditions \eqref{bcbc} on the
exterior boundaries.
For the material parameters, 
we take the fluid density and viscosity to be one ($\rho^f=\mu^f=1$), 
and vary the structure density and Lam\'e parameters in large parameter ranges:
$$
%$ 
\rho^s \in\{10^{-3}, 1, 10^{3}\},
%$, 
%$
\mu^s = \delta_1\, \rho^s, \text{with } \delta_1\in\{0.1, 1, 10\},
%$ 
\text{and } 
%$
\lambda^s = \delta_2\, \mu^s, \text{with } \delta_2\in\{1, 10^4\}.
%$
$$
Here $\delta_2=1$ corresponds to a compressible structure, while
$\delta_2=10^4$ corresponds to a nearly incompressible structure.

We run simulations on a sequence of uniform unstructured 
triangular meshes with mesh size $h=\frac{1}{10\times 2^j}
$ for $j=0,1,2,3$. 
We take the polynomial degree to be either $k=1$ or $k=2$.
We use the (second-order) Crank-Nicolson temporal
discretization \eqref{full} for $k=1$,  and 
the (third-order) BDF3 temporal discretization \eqref{fullB}, and
take a uniform time step size $\delta t = h$. 
To start the BDF3 scheme, we compute 
$(\bld u_h^m, \bld \eta_h^{s,m}, \widehat{\bld \eta}_h^{s,m})$
by interpolating the exact solution at time $t_m=m\,\delta t$, $m=0,1,2$.
The preconditioned MinRes solver with the preconditioner
\eqref{prec} with AMG blocks  \eqref{s-pre} and \eqref{a-pre} 
is used to solve the linear system in each
time step, for which we start with {\it zero} initial 
guess and stop until the residual norm is decreased by a factor of
$10^{-8}$.

The $L^2$-errors in the velocity approximation 
$\|\bld u-\bld u_h\|_{\Omega}$ at the final time $T=0.3$ are documented in 
Table \ref{table:m1}--\ref{table:m2} for various parameter choices.
It is clear to observe that our fully discrete scheme provide an optimal 
velocity approximation of order 2 for polynomial degree $k=1$ with Crank-Nicolson time
stepping, and of order 3 for $k=2$ with BDF3 time stepping. 
Moreover, we observe that our fully discrete scheme is robust with respect to 
large density variations and large Lam\'e parameter variations since the errors for different parameters in each row of Table
\ref{table:m1}--\ref{table:m2} are similar.

The average numbers of iterations needed for the convergence of the
preconditioned MinRes solver are recorded in 
Table \ref{table:m3}--\ref{table:m4}.
We observe for polynomial degree $k=1$, we roughly need about $150$
iterations
to converge for the compressible structure case in Table \ref{table:m3}
and about $116$ iterations for the nearly incompressible structure case in 
Table \ref{table:m4}. Also, the preconditioner is fairly robust with
respect to the mesh size (and time step size), and parameter variations in 
$\rho^s$ and $\mu^s$.
Similar results are observed for the $k=2$ case, which needs
roughly about $285$ iterations to converge for the compressible case in Table
\ref{table:m3} and about 
$210$ iterations for the nearly incompressible case. However, it is also clear that
the preconditioner is not robust with respect to polynomial degree $k$.
We finally point out that the $k$-dependency on the iteration counts is due to the 
auxiliary space velocity preconditioner \eqref{a-pre} since if we replace 
$\widehat{\mathsf{iA}}$ by the exact inverse  $\mathsf{A}^{-1}$, the
iteration counts  are then observed 
to be quite insensitive to the polynomial degree:
about  30--40 iterations are needed  in the compressible cases, and about 
20--30 iterations in the nearly incompressible cases
for polynomial degree $k=1,2,3,4$.
This is expected as the polynomial degree in the pressure block 
is kept to be 0 regardless of the velocity polynomial degree $k$ in the
global linear system due to static condensation; see Remark \ref{rk:condense}.

%%%%%%%%%%%% TABLE 1
\begin{table}[ht]
\centering % used for centering table
\resizebox{0.9\columnwidth}{!}
{%
\begin{tabular}{cc| ccc|ccc|ccc}
\toprule
  & & \multicolumn{3}{c}{$\rho^s=10^{-3}$}
          & \multicolumn{3}{|c}{$\rho^s=1$}
          & \multicolumn{3}{|c}{$\rho^s=10^3$}
           \\
          & 
          & $\delta_1=0.1$ &$\delta_1=1$ &$\delta_1=10$ 
          & $\delta_1=0.1$ &$\delta_1=1$ &$\delta_1=10$ 
          & $\delta_1=0.1$ &$\delta_1=1$ &$\delta_1=10$ 
%           \\
\tabularnewline
 \midrule
$k$& $1/h$  
   & error & error& error
   & error & error& error
   & error & error& error
\tabularnewline
\midrule
\multirow{4}{2mm}{1} 
   & 10& 3.492e-02& 3.420e-02& 5.624e-02& 3.489e-02& 3.408e-02& 5.350e-02&3.460e-02&  3.496e-02& 4.566e-02\\   
  &  20& 8.409e-03& 8.362e-03& 1.145e-02& 8.400e-03& 8.345e-03&1.085e-02&8.312e-03&  8.531e-03& 1.454e-02  \\ 
   &  40& 2.052e-03& 2.074e-03& 3.021e-03& 2.051e-03& 2.068e-03&2.777e-03&2.033e-03&  2.102e-03& 3.247e-03   \\
   &  80& 5.063e-04& 5.125e-04& 9.015e-04& 5.059e-04& 5.113e-04&8.126e-04&4.974e-04&  5.260e-04& 9.448e-04   \\
\midrule
\multicolumn{2}{c}{rate} 
   & 
%2.03    2.02    2.02 2.03    2.02    2.05   2.04   2.02   1.88 
2.04 &   2.02 &   1.98 &   2.04  &   2.02  &  2.01 &   2.04 &   2.02 &   1.89    
\\ [1.5ex]
 \midrule
\multirow{4}{2mm}{2} 
    & 10&4.124e-03& 4.273e-03& 4.331e-03&4.120e-03& 4.260e-03& 4.288e-03&4.116e-03& 4.279e-03& 4.339e-03 \\   
   &  20&5.151e-04& 5.298e-04& 5.262e-04&5.148e-04& 5.283e-04& 5.239e-04&5.136e-04& 5.442e-04& 5.259e-04  \\ 
   &  40&6.267e-05& 6.549e-05& 6.476e-05&6.265e-05& 6.548e-05& 6.564e-05&6.269e-05& 7.180e-05& 6.390e-05   \\
   &  80&7.733e-06& 8.028e-06& 7.712e-06&7.732e-06& 8.039e-06& 7.819e-06&7.738e-06& 9.032e-06& 8.915e-06   \\
\midrule
\multicolumn{2}{c}{rate} 
   & 3.02&    3.02&    3.04
   & 3.02&    3.02&    3.03
   & 3.02&    2.96&    2.98\\ 
\bottomrule
\end{tabular}}
\vspace{2ex}
\caption{\it \textbf{Example 1:} History of convergence of the $L^2$-velocity
errors. Compressible structure ($\delta_2=1$).} % title of Table
\label{table:m1} % is used to refer this table in the text
\end{table}

%%%%%%%%%%%% TABLE 2
\begin{table}[ht]
\centering % used for centering table
\resizebox{0.9\columnwidth}{!}
{%
\begin{tabular}{cc| ccc|ccc|ccc}
\toprule
  & & \multicolumn{3}{c}{$\rho^s=10^{-3}$}
          & \multicolumn{3}{|c}{$\rho^s=1$}
          & \multicolumn{3}{|c}{$\rho^s=10^3$}
           \\
          & 
          & $\delta_1=0.1$ &$\delta_1=1$ &$\delta_1=10$ 
          & $\delta_1=0.1$ &$\delta_1=1$ &$\delta_1=10$ 
          & $\delta_1=0.1$ &$\delta_1=1$ &$\delta_1=10$ 
%           \\
\tabularnewline
 \midrule
$k$& $1/h$  
   & error & error& error
   & error & error& error
   & error & error& error
\tabularnewline
\midrule
\multirow{4}{2mm}{1} 
    & 10& 3.388e-02& 3.304e-02& 5.068e-02 &3.388e-02 &3.351e-02 &4.935e-02&3.382e-02& 3.478e-02& 4.694e-02 \\   
   &  20& 8.211e-03& 8.094e-03& 1.006e-02 &8.201e-03 &8.227e-03 &9.373e-03&8.088e-03& 8.400e-03& 1.248e-02  \\ 
   &  40& 2.004e-03& 1.998e-03& 2.136e-03 &2.002e-03 &2.022e-03 &2.053e-03&1.980e-03& 2.072e-03& 3.486e-03   \\
   &  80& 4.949e-04& 4.942e-04& 8.038e-04 &4.943e-04 &4.999e-04 &7.259e-04&4.861e-04& 5.180e-04& 9.316e-04   \\
\midrule
\multicolumn{2}{c}{rate} 
   & 
2.03  &  2.02  &  2.02 &2.03   & 2.02  &  2.05  & 2.04 &  2.02  & 1.88 
\\ [1.5ex]
\midrule
\multirow{4}{2mm}{2} 
    & 10&4.195e-03& 4.354e-03& 4.406e-03 &4.164e-03& 4.298e-03& 4.307e-03&4.133e-03& 4.311e-03& 4.374e-03   \\   
   &  20&5.200e-04& 5.296e-04& 5.221e-04 &5.181e-04& 5.237e-04& 5.272e-04&5.155e-04& 5.457e-04& 5.253e-04    \\ 
   &  40&6.258e-05& 6.421e-05& 6.430e-05 &6.245e-05& 6.419e-05& 6.548e-05&6.267e-05& 7.149e-05& 6.446e-05     \\
   &  80&7.697e-06& 7.845e-06& 7.708e-06 &7.691e-06& 7.886e-06& 7.860e-06&7.727e-06& 8.962e-06& 8.890e-06     \\
\midrule
\multicolumn{2}{c}{rate} 
    &3.03&    3.04&    3.05
    &3.03&    3.03&    3.03 &
3.02&    2.97&    2.99
   \\ 
\bottomrule
\end{tabular}}
\vspace{2ex}
\caption{\it \textbf{Example 1:} History of convergence of the $L^2$-velocity errors.
Nearly incompressible structure ($\delta_2=10^4$).} % title of Table
\label{table:m2} % is used to refer this table in the text
\end{table}

%%%%%%%%%%%% TABLE 3-it
\begin{table}[ht]
\centering % used for centering table
\resizebox{0.9\columnwidth}{!}
{%
\begin{tabular}{cc| ccc|ccc|ccc}
\toprule
  & & \multicolumn{3}{c}{$\rho^s=10^{-3}$}
          & \multicolumn{3}{|c}{$\rho^s=1$}
          & \multicolumn{3}{|c}{$\rho^s=10^3$}
           \\
          & 
          & $\delta_1=0.1$ &$\delta_1=1$ &$\delta_1=10$ 
          & $\delta_1=0.1$ &$\delta_1=1$ &$\delta_1=10$ 
          & $\delta_1=0.1$ &$\delta_1=1$ &$\delta_1=10$ 
%           \\
\tabularnewline
 \midrule
$k$& $1/h$  
   & iter & iter & iter
   & iter & iter & iter
   & iter & iter & iter
\tabularnewline
\midrule
\multirow{4}{2mm}{1} 
    & 10& 136& 142& 122&137& 141& 122&154& 151& 128\\   
   &  20& 135& 146& 131&136& 148& 132&150& 157& 140 \\ 
   &  40& 148& 158& 152&145& 160& 153&149& 155& 154  \\
   &  80& 161& 174& 177&159& 180& 181&158& 169& 175  \\
 \midrule
\multirow{4}{2mm}{2} 
    & 10&281& 290& 250 &283& 291& 243& 289& 288& 238 \\   
   &  20&283& 302& 269 &284& 302& 263& 285& 288& 255 \\ 
   &  40&294& 313& 297 &293& 313& 291& 281& 287& 274 \\
   &  80&291& 310& 307 &288& 307& 303& 272& 279& 264 \\
\bottomrule
\end{tabular}}
\vspace{2ex}  
\caption{\it \textbf{Example 1:} 
    Average iteration counts for the preconditioned MinRes solver.
  Compressible structure ($\delta_2=1$).} % title of Table
\label{table:m3} % is used to refer this table in the text
\end{table}

%%%%%%%%%%%% TABLE 4-it
\begin{table}[ht]
\centering % used for centering table
\resizebox{0.9\columnwidth}{!}
{%
\begin{tabular}{cc| ccc|ccc|ccc}
\toprule
  & & \multicolumn{3}{c}{$\rho^s=10^{-3}$}
          & \multicolumn{3}{|c}{$\rho^s=1$}
          & \multicolumn{3}{|c}{$\rho^s=10^3$}
           \\
          & 
          & $\delta_1=0.1$ &$\delta_1=1$ &$\delta_1=10$ 
          & $\delta_1=0.1$ &$\delta_1=1$ &$\delta_1=10$ 
          & $\delta_1=0.1$ &$\delta_1=1$ &$\delta_1=10$ 
%           \\
\tabularnewline
 \midrule
$k$& $1/h$  
   & iter & iter & iter
   & iter & iter & iter
   & iter & iter & iter
\tabularnewline
\midrule
\multirow{4}{2mm}{1} 
    & 10&115& 103& 108&115& 105& 111&134& 106& 109    \\   
   &  20&115& 101& 106&116& 103& 108&130& 108& 112     \\ 
   &  40&125& 108& 114&124& 110& 111&130& 105& 107      \\
   &  80&138& 117& 123&138& 123& 127&134& 113& 121      \\
\midrule
\multirow{4}{2mm}{2} 
    & 10&231& 200& 199&231& 199& 188&239& 195& 210 \\   
   &  20&228& 198& 200&228& 199& 188&239& 186& 205  \\ 
   &  40&232& 206& 213&232& 205& 202&237& 171& 200   \\
   &  80&229& 208& 216&226& 206& 208&231& 176& 186   \\
\bottomrule
\end{tabular}}
\vspace{2ex}
\caption{\it\textbf{Example 1:} 
    Average iteration counts for the preconditioned MinRes solver.
Nearly incompressible structure ($\delta_2=10^4$).} % title of Table
\label{table:m4} % is used to refer this table in the text
\end{table}

\subsection{Example 2: a linear two-dimensional test case}
We consider a simplified linear version of the numerical experiment reported in
\cite{Nobile01,Bukac14x}. We use the similar set-up as in \cite{Bukac14x}. 
We consider a fluid domain, $\Omega^f=(0,6)\times (0,0.5)[\mathrm{cm}]^2$, and 
a structure domain, $\Omega^s=(0,6)\times (0.5,0.6) [\mathrm{cm}]^2$, connected by an
interface $\Gamma =\{ (x,y):\;x\in(0,6), y = 0.5\}$. 
We consider the FSI problem \eqref{f-eq}--\eqref{interface} with
$\bld f^f=\bld f^s=0$, where we add
a linear {\it spring} term, $\beta^s \bld \eta^s$ to the 
first equation in \eqref{s-eq}:
$$
\rho^s \partial_t\bld u^s+\beta^s\bld \eta^s
-\nabla\cdot \bld \sigma^s(\bld \eta^s) = 0.
$$
The material parameters are given as follows:
$
\rho^s = 1.1[\mathrm{g/cm^3}],$ $
\mu^s = 0.575\times 10^6[\mathrm{dye/cm^2}],$ $
\beta^s = 4\times 10^6[\mathrm{dye/cm^4}],$ $
\lambda^s = 1.7\times 10^6[\mathrm{dye/cm^2}],$ $
\rho^f = 1[\mathsf{g/cm^3}],$ $
\mu^f = 0.035[\mathsf{g/(cm\cdot s)}]
$,
which are within physiologically realistic
values of blood flow in compliant arteries.
%The flow is initially at rest. The problem is very sensitive to 
%boundary conditions, which we take as follows:
The flow is initially at rest, and we take the following boundary conditions
which model a pressure driven flow:
\begin{alignat*}{3}
  (\bld \sigma^f\bld n)\cdot\bld n=&\; -p_{in}(t), \;\;\;&&
  \mathsf{tang}(\bld u^f)=\; 0     \quad &&\text{on
  } \Gamma^f_{in}:=\{(x,y): x=0, y\in(0,0.5)\},\\
  (\bld \sigma^f\bld n)\cdot \bld n=&\; 0,\;&&
  \mathsf{tang}(\bld u^f)=\; 0\quad &&\text{on
  } \Gamma^f_{out}:=\{(x,y): x=6, y\in(0,0.5)\},\\
  \mathsf{tang}(\bld \sigma^f\bld n)=&\;0,\;&&
  \bld u^f\cdot\bld n=\; 0 \quad&& \text{on } \Gamma^f_{bot}:=\{(x,y):
  x\in(0,6), y = 0\},\\
  \bld \eta^s\cdot\bld n=&\; 0, \;\;\;&&
  \mathsf{tang}(\bld \eta^s)=\; 0     \quad &&\text{on
  } \Gamma^s_{in/out}:=\{(x,y): x\in\{0,6\}, y\in(0.5,0.6)\},\\
  (\bld \sigma^s\bld n)\cdot\bld n=&\;0,\;&&
  \mathsf{tang}(\bld \eta^s)=\; 0 
  \quad&& \text{on } \Gamma^s_{top}:=\{(x,y):
  x\in(0,6), y = 0.6\},
\end{alignat*}
where the  time-dependent
pressure boundary source term at the inlet $\Gamma_{in}^f$ is given as
follows:
\begin{align*}
  p_{in}(t)=\left\{
    \begin{tabular}{ll}
      $\frac{p_{\max}}{2}\left(1-\cos(\frac{2\pi t}{t_{\max}})\right)$,
      & if $t \le t_{\max}$, \\
      $0$, & if $t>t_{\max}$,
  \end{tabular}
  \right.
\end{align*}
where $t_{\max} = 0.03 [\mathrm{s}]$ and 
$p_{\max} = 1.333\times 10^4 [\mathrm{dyne/cm^2}]$.
The final time of the simulation is $T=1.2\times 10^{-2} [\mathrm{s}]$.

In this example, we use the divergence-conforming HDG scheme with
Crank-Nicolson time stepping \eqref{full}. 
The additional spring term $\beta^s \bld \eta^s$ 
in the structure equation does not alter the 
form of the resulting global linear system. 
Hence we still apply the preconditioned
MinRes solver using the preconditioner \eqref{prec} 
with AMG blocks \eqref{a-pre} and \eqref{s-pre}.
Due to different boundary conditions, we shall add the boundary
contribution
$$
\sum_{F\in \Gamma_{in}^f\cup \Gamma_{out}^f\cup \Gamma_{top}^s}
\int_F \frac{1}{\rho\,h}p_hq_h\,\mathrm{ds}
$$
to the bilinear form \eqref{neumann} associated with the matrix 
$\mathsf{N}_h^{\rho,\gamma}$ in the pressure block \eqref{s-pre}, 
and take the following continuous linear velocity auxiliary finite element space 
with the modified boundary conditions
{\small
$$
\bld V_h^{cg}:= \{\bld v\in \bld H^{1}(\Omega):\;\;
\bld v|_K\in [\mathcal{P}^1(K)]^d, \;\forall K\in \Th, 
\;\bld v|_{\Gamma_{in/out}^s}=0,
\;\bld v\cdot \bld n|_{\Gamma_{bot}^f}=0,
\;\mathsf{tang}(\bld v)|_{\Gamma_{in/out}^f\cup \Gamma^s_{top}}=0
\}
$$ }
in the velocity block \eqref{a-pre}.

For the discretization parameters, we consider
polynomial degree $k\in\{1,2,4\}$, 
a uniform unstructured triangular mesh with mesh size $h\in \{0.1,
0.05,0.025\}$,
and a uniform time step size $\delta t \in\{10^{-4}, 0.25\times 10^{-4}\}$.
%on uniform unstructured meshes with mesh size 
%$h=0.1$, $h= 0.05$, or $h= 0.025$.
%We further take the time step size to be either $\delta
%t = 10^{-4}$ or $\delta t = 0.25\times 10^{-4}$.
%and either use a linear velocity finite element
%space ($k=1$) with 
%mesh size $h=0.05$ and $h=0.025$, or a quadratic velocity finite element
%space ($k=2$) with mesh size $h=0.1$ and $h=0.05$.
%We also compute a reference solution using a quartic velocity finite
%element space ($k=2$) on the fine mesh $h=0.025$ with a smaller time step
%size $\delta t=0.25\times 10^{-4}$.
For all the numerical simulations, 
we stop the MinRes iteration when residual norm is decreased by a factor of
$tol=10^{-6}$. The average number of MinRes iterations for different
discretization parameters are documented in Table \ref{table:mm}.
From Table \ref{table:mm}, we observe that 
\begin{itemize}
  \item [(a)] 
for the same polynomial degree $k$ and mesh size $h$, a smaller time step size $\delta t$ leads to a smaller number of MinRes iterations.
  \item [(b)] 
for the same mesh size $h$ and time step size $\delta t$, a larger
polynomial degree $k$ leads to a larger number of  MinRes iterations, with 
the number of iterations roughly doubled from $k=1$ to $k=4$.
\item [(c)]
for the same time step size $\delta t$ and 
polynomial degree $k$, the number of  MinRes iterations roughly stays in
the same level as mesh size $h$ decreases.
\end{itemize}
We also mention that the MinRes iterations in Table \ref{table:mm} are
smaller than those in Table \ref{table:m3}--\ref{table:m4} in Example 1,
which is partially due to the fact that we used a larger stopping tolerance
$tol = 10^{-6}$ here. 
%Even if we decrease $tol=10^{-8}$, 
%If we take $tol=10^{-8}$, we found that the number of MinRes iterations 
%for $k\in\{1,2\}$ with $\delta t = 10^{-4}$
%would be similar to those in Table \ref{table:m4} for the nearly incompressible case.

%We observe that for the same polynomial degree $k$ and mesh size $h$, 
%a smaller time step size $\delta t$ leads to a smaller number of MinRes iterations.
%We also notice that for the same mesh size $h$ and time step size $\delta
%t$, the number of MinRes iterations slightly grow when polynomial degree
%$k$ is increased, also 
%%%%%%%%%%%% TABLE 3-it
\begin{table}[ht]
\centering % used for centering table
%\resizebox{0.9\columnwidth}{!}
{%
\begin{tabular}{cc| ccc|cccccc}
\toprule
  & & \multicolumn{3}{c}{$\delta t=10^{-4}$}
  & \multicolumn{3}{|c}{$\delta t=0.25\times 10^{-4}$}
           \\
          & 
          & $k=1$ &$k=2$ &$k=4$ 
          & $k=1$ &$k=2$ &$k=4$ 
%           \\
\tabularnewline
 \midrule
   & $1/h$  
   & iter & iter & iter
   & iter & iter & iter
\tabularnewline
\midrule
    & 10&  76& 133& 213&  59& 79&143 \\   
   &  20&  89& 106& 158&  60& 83&134 \\ 
   &  40&  89& 115& 167&  72& 98&150  \\
\bottomrule
\end{tabular}}
\vspace{2ex}
  \caption{\it \textbf{Example 2:} 
    Average iteration counts for the preconditioned MinRes solver.} % title of Table
\label{table:mm} % is used to refer this table in the text
\end{table}

%Acctually $\rho^s+0.5\delta
%t^2\gamma$ in the linear system.
%with a "modified" structural density $\widetilde{\rho}^s= \rho^s
%+0.5\delta t^2\gamma$  to take into account the spring term.
%additional term $(0.5\delta t\beta\bld u_h^{j-1/2},\bld v_h)_s$

Finally, we plot in Figure \ref{fig:ex2} the flow rate, which is calculated
as two thirds of the horizontal velocity, 
and pressure at the bottom boundary $\Gamma^f_{bot}$, and the vertical 
displacement on the interface $\Gamma$ at final time $t=1.2\times
10^{-2}$ for 
$k=1$ with mesh size $h\in\{0.05,0.025\}$ and time step size $\delta t=10^{-4}$, 
$k=2$ with mesh size $h\in\{0.1,0.05\}$ and time step size
$\delta t=10^{-4}$, along with reference data for 
$k=4$ with mesh size $h = 0.025$
and time step size $\delta t=0.25\times 10^{-4}$.
We observe that both the results for $k=1$ and $k=2$ agrees well with the
reference data. We also observe that the result for 
$k=2$ on the coarse mesh with mesh size $h=0.1$ is more accurate that that
for $k=1$ on the medium
mesh with mesh size $h=0.05$, which indicates the benefits of using
a scheme with a higher order spatial discretization.
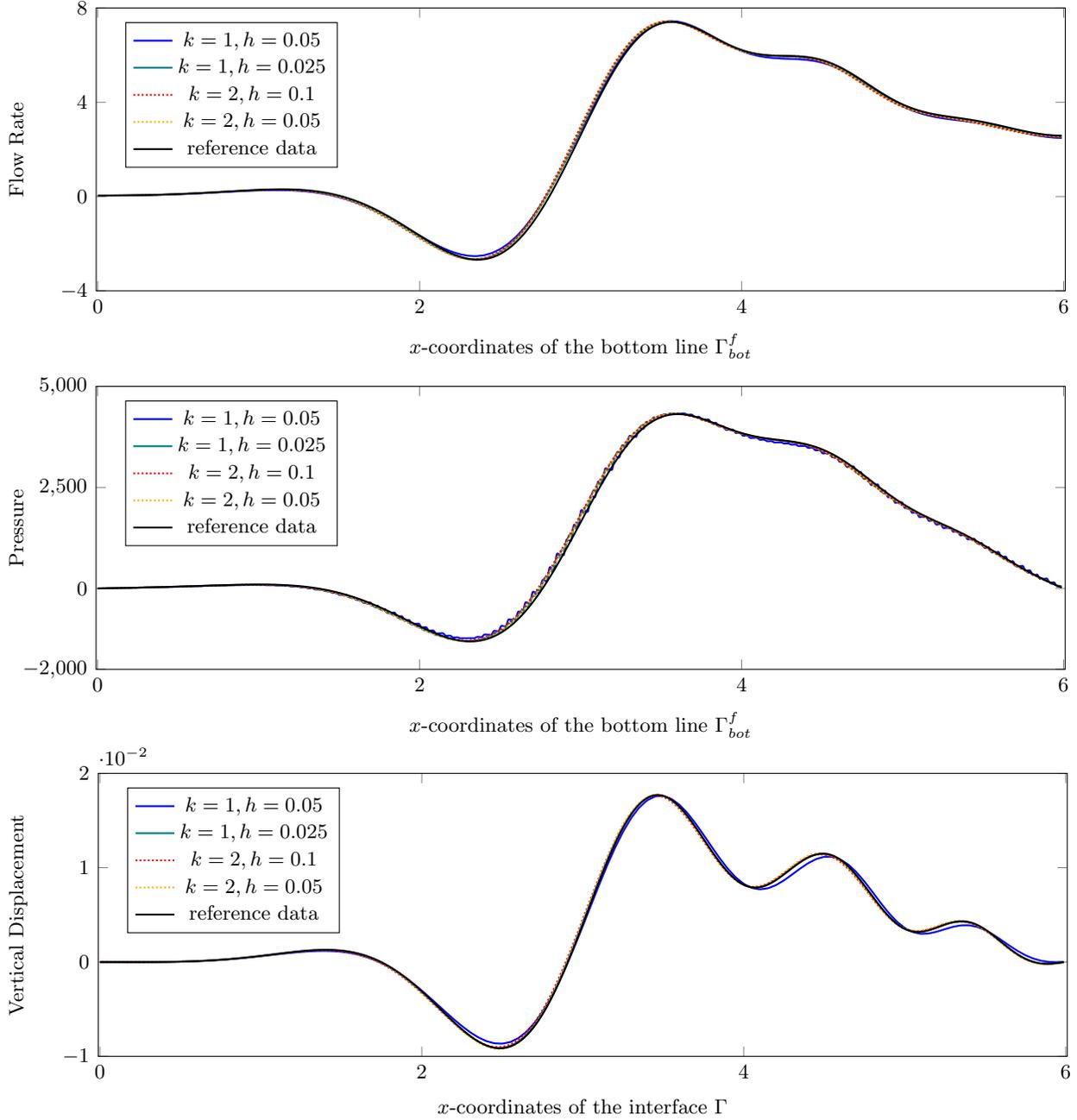
\begin{figure}
%%%% VELOCITY 
\begin{tikzpicture} % kinetic energy
     \tikzstyle{every node}=[font=\footnotesize]
\begin{axis}[
	width=\textwidth,
	height=0.26\textheight,
            xtick={0,2,4,6},
    ytick={-4,0,4, 8},
    xmin=-0.01,
    xmax=6.01,
    ymin=-4,
    ymax=8,
   	yticklabel style={/pgf/number format/fixed,/pgf/number format/precision=1},   	
   	every axis plot/.append style={line width=0.8pt, smooth},
    ylabel={Flow Rate},
    xlabel={$x$-coordinates of the bottom line  $\Gamma_{bot}^f$},
    legend style={at={(0.03,0.95)},anchor=north west}
   ]
\addplot[blue, solid] table[x index=0, y index=1]{U3.txt};
\addlegendentry{$k=1, h = 0.05$}
\addplot[green!50!blue] table[x index=0, y index=2]{U3.txt};
\addlegendentry{$k=1, h = 0.025$}
\addplot[red, densely dotted] table[x index=0, y index=3]{U3.txt};
\addlegendentry{$k=2, h = 0.1$}
\addplot[red!30!yellow, densely dotted] table[x
  index=0, y index=4]{U3.txt};
\addlegendentry{$k=2, h = 0.05$}
\addplot[black] table[x
  index=0, y index=5]{U3.txt};
\addlegendentry{reference data}  
\end{axis}
\end{tikzpicture}

%%%%%% PRESSURE
\begin{tikzpicture} % kinetic energy
     \tikzstyle{every node}=[font=\footnotesize]
\begin{axis}[
	width=\textwidth,
	height=0.26\textheight,
            xtick={0,2,4,6},
    ytick={-2000, 0,2500, 5000},
    xmin=-0.01,
    xmax=6.01,
    ymin=-2000,
    ymax=5000,
   	yticklabel style={/pgf/number format/fixed,/pgf/number format/precision=1},   	
   	every axis plot/.append style={line width=0.8pt, smooth},
    legend style={at={(0.03,0.95)},anchor=north west},
    ylabel={Pressure},
    xlabel={$x$-coordinates of the bottom line $\Gamma_{bot}^f$},
   ]
\addplot[blue, solid] table[x index=0, y index=1]{P3.txt};
\addlegendentry{$k=1, h = 0.05$}
\addplot[green!50!blue] table[x index=0, y index=2]{P3.txt};
\addlegendentry{$k=1, h = 0.025$}
\addplot[red, densely dotted] table[x index=0, y index=3]{P3.txt};
\addlegendentry{$k=2, h = 0.1$}
\addplot[red!30!yellow, densely dotted] table[x
  index=0, y index=4]{P3.txt};
\addlegendentry{$k=2, h = 0.05$}
\addplot[black] table[x
  index=0, y index=5]{P3.txt};
\addlegendentry{reference data}  
\end{axis}
\end{tikzpicture}

%%%%%%% DISPLACEMENT
\begin{tikzpicture} % kinetic energy
     \tikzstyle{every node}=[font=\footnotesize]
\begin{axis}[
	width=\textwidth,
	height=0.26\textheight,
            xtick={0,2,4,6},
    ytick={-0.01,0,0.01, 0.02},
    xmin=-0.01,
    xmax=6.01,
    ymin=-.01,
    ymax=0.02,
   	yticklabel style={/pgf/number format/fixed,/pgf/number format/precision=1},   	
   	every axis plot/.append style={line width=0.8pt, smooth},
    ylabel={Vertical Displacement},
    xlabel={$x$-coordinates of the interface $\Gamma$},
    legend style={at={(0.03,0.95)},anchor=north west}
   ]
\addplot[blue, solid] table[x index=0, y index=1]{D3.txt};
\addlegendentry{$k=1, h = 0.05$}
\addplot[green!50!blue] table[x index=0, y index=2]{D3.txt};
\addlegendentry{$k=1, h = 0.025$}
\addplot[red, densely dotted] table[x index=0, y index=3]{D3.txt};
\addlegendentry{$k=2, h = 0.1$}
\addplot[red!30!yellow, densely dotted] table[x
  index=0, y index=4]{D3.txt};
\addlegendentry{$k=2, h = 0.05$}
\addplot[black] table[x
  index=0, y index=5]{D3.txt};
\addlegendentry{reference data}  
  \node[above,font=\large\bfseries] at (current bounding box.north)  {Vertical displacement along 
 the interface $y=0.5$};
\end{axis}
\end{tikzpicture}
\caption{\it \textbf{Example 2:}  \it
  Numerical solutions of the scheme \eqref{full} with different 
  discretization parameters  at final time $t=1.2\times 10^{-2}[\mathsf{s}]$.
%Blue line: polynomial degree $k=1$, mesh size $h=0.05$;
%Green line: polynomial degree $k=1$, mesh size $h=0.025$;
%Red dotted line: polynomial degree $k=2$, mesh size $h=0.1$;
%Dark red dotted line: polynomial degree $k=2$, mesh size $h=0.05$.
%These four methods take time step size $\delta t = 10^{-4}$.
%Reference solution is obtained by the HDG scheme \erqef{full}
%with $k=4$, $h=0.025$ and $\delta t = 0.25\times 10^{-4}$.
Top: flow rate $\frac23 \bld v_h[0]$ along 
bottom line $\Gamma_{bot}^f$; 
Middle: pressure along bottom line $\Gamma_{bot}^f$;
Bottom: vertical displacement $\bld\eta^s_h[1]$ along the interface
$\Gamma$.
Reference data is obtained with the HDG scheme \eqref{full} using polynomial degree $k=4$, mesh size $h=0.025$, 
and time step size  $\delta t=0.25\times 10^{-4}$.
All the other methods use the time step size $\delta t= 10^{-4}$.
}
  \label{fig:ex2}
\end{figure}

\subsection{Example 3: a linear three-dimensional test case on a straight
cylindrical pipe}
Now we consider a 3D example that simulates the propagation of
the pressure pulse on a straight cylinder (see \cite{Deparis06}).
The fluid domain is a straight cylinder of radius $0.5 [\mathsf{cm}]$ and length
$5 [\mathsf{cm}]$, 
$\Omega^f = \{(x,y,z): x\in(0,5),\, y^2+z^2<(0.5)^2\}$,
the structure domain has a thickness of $0.1 [\mathsf{cm}]$,
$\Omega^s = \{(x,y,z): x\in(0,5),\, (0.5)^2<y^2+z^2<(0.6)^2\}$, 
and the interface 
$\Gamma = \{(x,y,z): x\in(0,5),\, y^2+z^2=(0.5)^2\}$.
We use the same material parameters as in Example 2.
The flow is initially at rest, and we take the same boundary conditions as
in Example 2 with the exception that a pure Neumann boundary condition 
$\bld \sigma^s\bld n=0$ is applied on the exterior structure boundary 
$\Gamma^s_{ext}:=\{(x,y,z): x\in(0,5), y^2+z^2=0.6^2\}$.

We apply the scheme \eqref{full} with 
time step size $\delta t = 10^{-4}$.
For the spatial discretization parameters, we consider two cases:
$k=1$ on a fine mesh with mesh size $h=0.05$ (264,288 tetrahedra), and 
$k=2$ on a coarse mesh with mesh size $h=0.1$ (33,036 tetrahedra).
The fine mesh is illustrated in Figure \ref{fig:ex3}.
\begin{figure}
  \centering
  \includegraphics[width=0.95\textwidth]{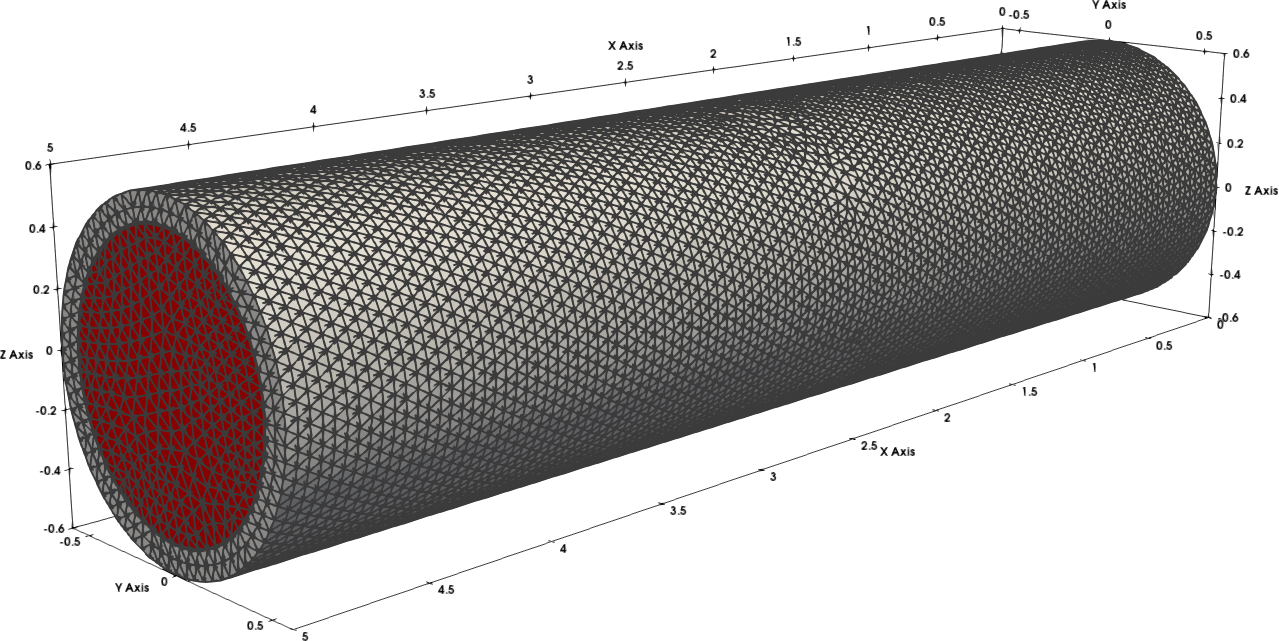}
  \caption{\it \textbf{Example 3:} the fine mesh with mesh size $h=0.05$.
  The red region is the fluid domain, and the gray region is the structure
domain.}
\label{fig:ex3}
\end{figure}
For the preconditioned MinRes linear system solver, 
we replace the point Gauss-Seidel smoother $\mathsf{R}$ 
in the  velocity preconditioner \eqref{a-pre} by a block Gauss-Seidel
smoother 
$\mathsf{R}^e$
based on edge blocks to further improve its efficiency. 
We stop the MinRes iteration when residual norm is decreased by a factor of
$10^{-6}$.
The average number of iterations for convergence for $k=1$ with $h=0.05$ is 
$60$ and that for $k=2$ with $h=0.1$ is $52$ when the edge-block
Gauss-Seidel smoother $\mathsf{R}^e$ is used in the velocity preconditioner
\eqref{a-pre}. If we instead use the 
point Gauss-Seidel smoother, the numbers would be 
$360$ for $k=1$ and $246$ for $k=2$.

Similar to Example 2, we 
plot in Figure \ref{fig:ex3Y} the flow rate, which is calculated
as two thirds of the horizontal velocity, 
and pressure at the center line $\{(x,0,0):\,x\in(0,5)\}$, and the
y-component of the displacement on the interface line 
$\{(x,0.5,0):\,x\in(0,5)\}$ at final time $t=1.2\times
10^{-2}$.
We find that the results for $k=1$ and $k=2$ agrees well with each
other.
\begin{figure}
%%%% VELOCITY 
\begin{tikzpicture} % kinetic energy
     \tikzstyle{every node}=[font=\footnotesize]
\begin{axis}[
	width=\textwidth,
	height=0.26\textheight,
            xtick={0,1,2,3,4,5},
    ytick={-4,0,4, 8,12},
    xmin=-0.01,
    xmax=5.01,
    ymin=-4.6,
    ymax=13,
   	yticklabel style={/pgf/number format/fixed,/pgf/number format/precision=1},   	
   	every axis plot/.append style={line width=0.8pt, smooth},
    ylabel={Flow Rate},
    xlabel={$x$-coordinates of the bottom line  $\Gamma_{bot}^f$},
    legend style={at={(0.03,0.95)},anchor=north west}
   ]
\addplot[blue, solid] table[x index=0, y index=1]{U3X.txt};
\addlegendentry{$k=1, h = 0.05$}
\addplot[red, densely dotted] table[x index=0, y index=2]{U3X.txt};
\addlegendentry{$k=2, h = 0.1$}
\end{axis}
\end{tikzpicture}

%%%%%% PRESSURE
\begin{tikzpicture} % kinetic energy
     \tikzstyle{every node}=[font=\footnotesize]
\begin{axis}[
	width=\textwidth,
	height=0.26\textheight,
            xtick={0,1,2,3,4,5},
    ytick={-3000, 0,3000},
    xmin=-0.01,
    xmax=5.01,
    ymin=-3500,
    ymax=3500,
   	yticklabel style={/pgf/number format/fixed,/pgf/number format/precision=1},   	
   	every axis plot/.append style={line width=0.8pt, smooth},
    legend style={at={(0.03,0.95)},anchor=north west},
    ylabel={Pressure},
    xlabel={$x$-coordinates of the bottom line $\Gamma_{bot}^f$},
   ]
\addplot[blue, solid] table[x index=0, y index=1]{P3X.txt};
\addlegendentry{$k=1, h = 0.05$}
\addplot[red, densely dotted] table[x index=0, y index=2]{P3X.txt};
\addlegendentry{$k=2, h = 0.1$}
\end{axis}
\end{tikzpicture}

%%%%%%% DISPLACEMENT
\begin{tikzpicture} % kinetic energy
     \tikzstyle{every node}=[font=\footnotesize]
\begin{axis}[
	width=\textwidth,
	height=0.26\textheight,
            xtick={0,2,4,6},
    ytick={-0.008,0,0.008},
    xmin=-0.01,
    xmax=5.01,
    ymin=-.008,
    ymax=0.008,
   	yticklabel style={/pgf/number format/fixed,/pgf/number format/precision=1},   	
   	every axis plot/.append style={line width=0.8pt, smooth},
    ylabel={Vertical Displacement},
    xlabel={$x$-coordinates of the interface $\Gamma$},
    legend style={at={(0.03,0.95)},anchor=north west}
   ]
\addplot[blue, solid] table[x index=0, y index=1]{D3X.txt};
\addlegendentry{$k=1, h = 0.05$}
\addplot[red, densely dotted] table[x index=0, y index=2]{D3X.txt};
\addlegendentry{$k=2, h = 0.1$}
\end{axis}
\end{tikzpicture}
\caption{\it \textbf{Example 3:}
  Numerical solutions of the scheme \eqref{full} with different
  discretization parameters 
  along cut lines
  at final time $t=1.2\times 10^{-2}[\mathsf{s}]$.
Top: flow rate $\frac23 \bld v_h[0]$ along 
center line $\{(x,0,0):\,x\in(0,5)\}$; 
Middle: pressure along center line 
$\{(x,0,0):\,x\in(0,5)\}$; 
Bottom: y-component of displacement 
$\bld\eta^s_h[1]$ along the interface line
$\{(x,0.5,0):\,x\in(0,5)\}$.}
  \label{fig:ex3Y}
\end{figure}
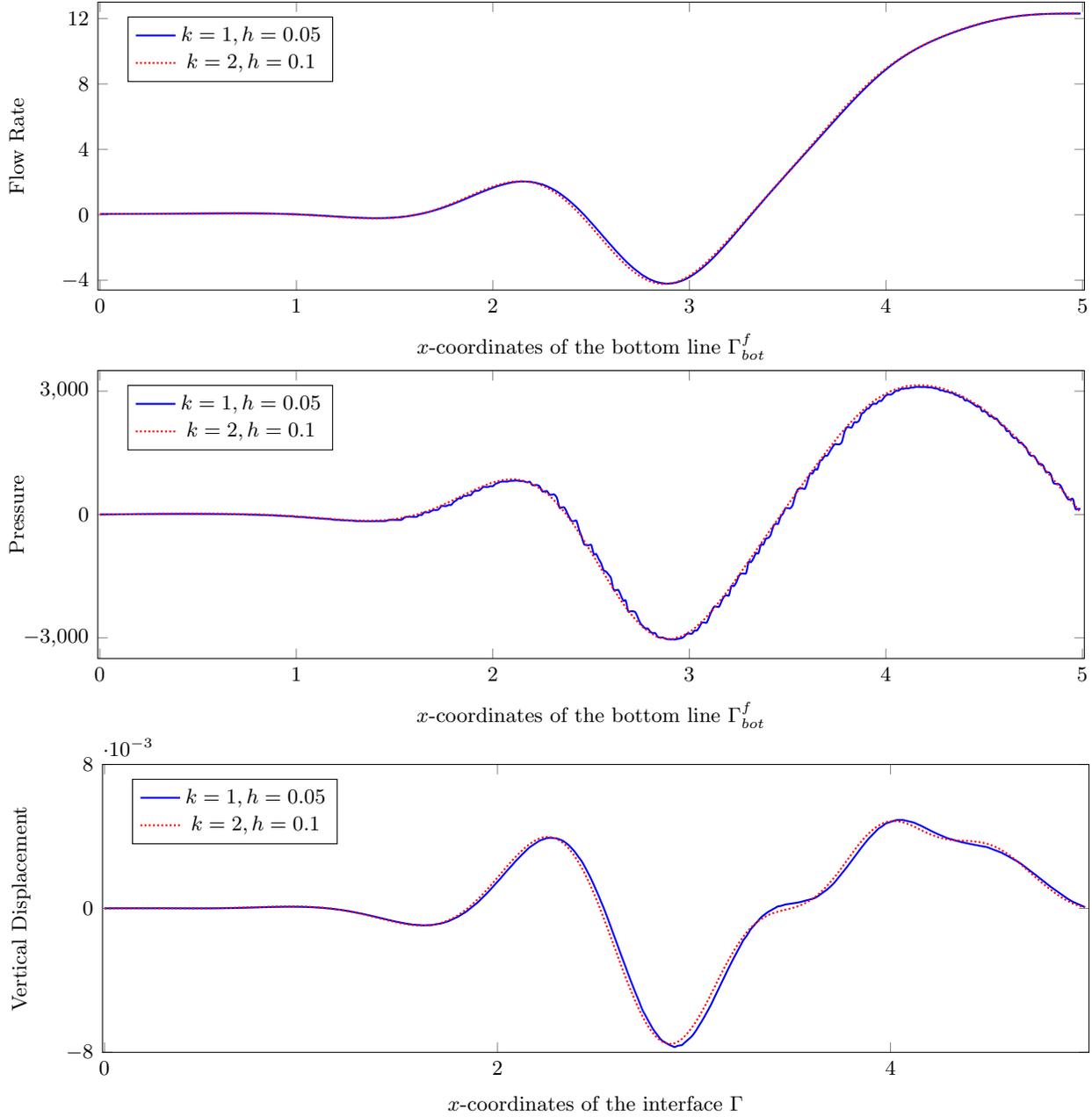

Finally, we plot 
the structure deformation along with the fluid pressure 
for $k=2$ with $h=0.1$ in Figure \ref{fig:ex3X} for
$t\in\{4,8,12\}\times 10^{-3}$. 
We clearly observe the propagation of a pressure pulse as time evolves.
\begin{figure}
  \centering
  \includegraphics[width=0.95\textwidth]{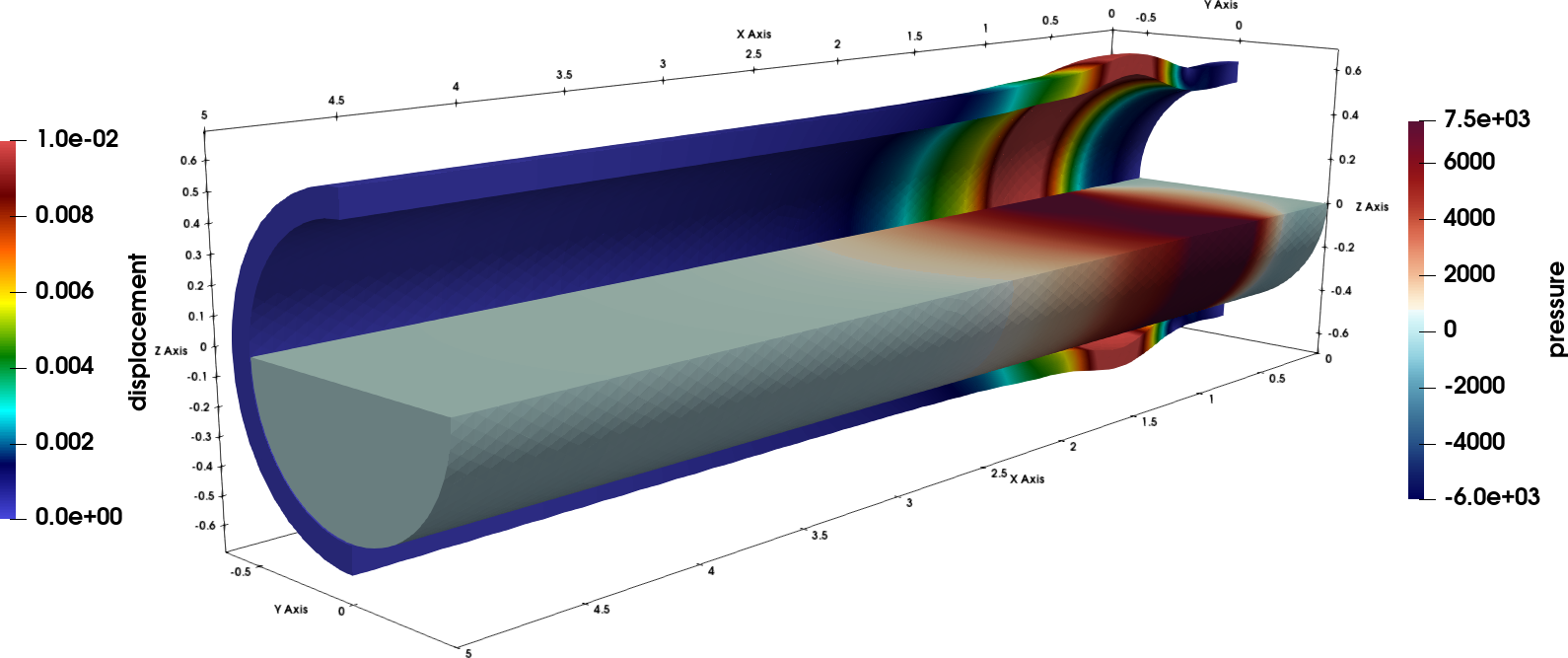}
  \includegraphics[width=0.95\textwidth]{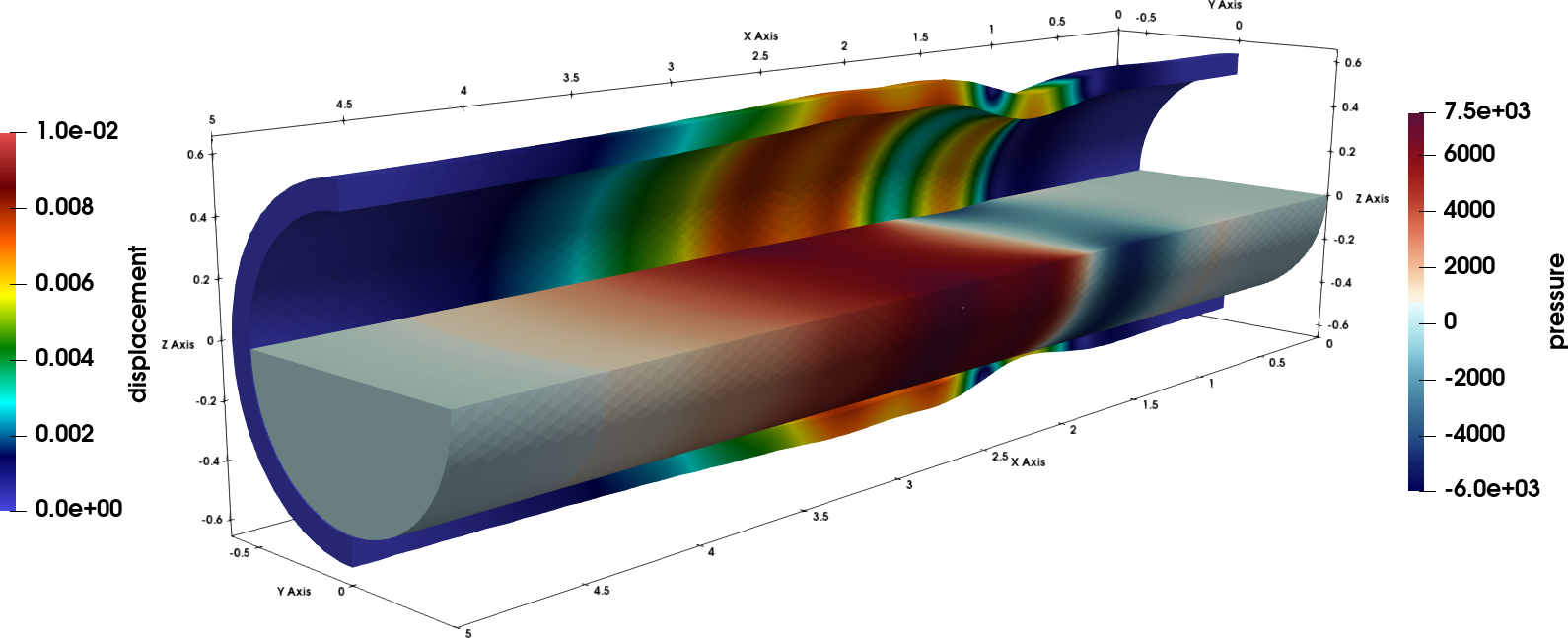}
  \includegraphics[width=0.95\textwidth]{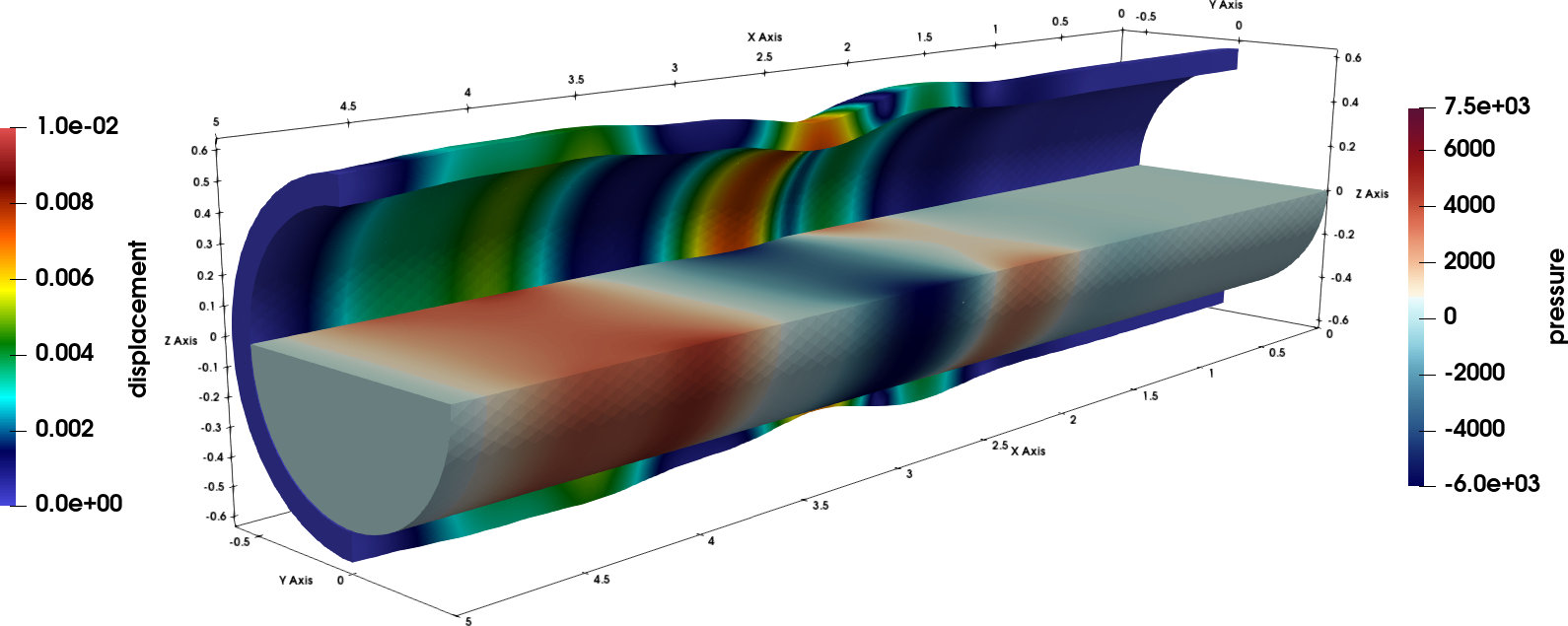}
  \caption{\it \textbf{Example 3:} The structure deformation and pressure 
approximation at different time.
 The structure deformation is enlarged by a factor of 8 and 
 is only shown on half of the structure domain with $y<0$. 
 The pressure approximation is only shown on half of the fluid domain with
 $z<0$. 
 From top to bottom, $t=0.004,
 0.008, 0.012$.
 ( $k=2$, $h=0.1$).
}
\label{fig:ex3X}
\end{figure}

%%%%%%%%%%%%%%%%%%%%%%%%%%%%%%%%%%%%%%%%%%%%
\section{Conclusion}
\label{sec:conclude}
We have present a novel monolithic divergence-conforming HDG scheme for 
a linear FSI problem with a thick structure. The fully discrete scheme 
produces an exactly divergence-free fluid velocity approximation and 
{is energy-stable}.
%and optimal convergent which is robust with respect to
%the material parameters.} 
Furthermore, we design an efficient block AMG
preconditioner and use it with a preconditioned MinRes solver for the resulting symmetric and
indefinite global linear system. This preconditioner is numerically observed
to be robust with respect to the mesh size, time step size and material
parameters in large parameter ranges. A theoretical analysis of this
preconditioner is our future work.

The extension of our scheme to other FSI models including thin structure
and/or moving interfaces consists of our ongoing work.

\label{sec:ale}
\bibliographystyle{siam}

\end{document}